\documentclass[11pt]{amsart}

\usepackage{bbm}
\usepackage[margin=1in]{geometry}
\usepackage{amsmath,amssymb,amsthm,mathtools}
\usepackage{enumitem}
\usepackage[colorlinks=true,linkcolor=blue,citecolor=blue,urlcolor=blue]{hyperref}
\usepackage[nameinlink,capitalize]{cleveref}

\newtheorem{theorem}{Theorem}[section]
\newtheorem{conjecture}[theorem]{Conjecture}
\newtheorem{proposition}[theorem]{Proposition}
\newtheorem{lemma}[theorem]{Lemma}
\newtheorem{corollary}[theorem]{Corollary}
\newtheorem{notation}[theorem]{Notation}
\theoremstyle{definition}
\newtheorem{definition}[theorem]{Definition}

\newtheorem{remark}[theorem]{Remark}

\newtheorem{innercustomthm}{Theorem}
\newenvironment{customthm}[1]
  {\renewcommand\theinnercustomthm{#1}\innercustomthm}
  {\endinnercustomthm}

\newtheorem{innercustomprop}{Proposition}
\newenvironment{customprop}[1]
  {\renewcommand\theinnercustomprop{#1}\innercustomprop}
  {\endinnercustomprop}
\newcommand{\mult}{\text{mult}}
\newcommand{\length}{\text{length}}
\newcommand{\Fq}{\mathbb F_q}

\newcommand{\kk}{\Bbbk}

\newcommand{\A}{\mathbb{A}}
\newcommand{\al}{\alpha}
\newcommand{\B}{\mathcal{B}}
\newcommand{\PP}{\mathbb{P}}
\newcommand{\bG}{\bar{G}}
\newcommand{\PPc}{\check{\PP}}
\newcommand{\Wb}{\overline{W}}
\newcommand{\sg}{S_{\Gamma}}
\newcommand{\ag}{A_{\Gamma}}

\newcommand{\Fb}{\Bar{F}}

\newcommand{\TT}{\mathcal{T}}
\newcommand{\dqmk}
{(\underbrace{d,\ldots,d}_{m-k},
  \underbrace{q,\ldots,q}_{k})}
\newcommand{\fa}{f^{\A}}
\newcommand{\EGH}{\operatorname{EGH}}
\newcommand{\lc}{\check{l}}
\newcommand{\Pc}{\check{P}}
\newcommand{\Set}{\mathcal{S}}

\newcommand{\CC}{\mathcal{C}}

\newcommand{\fptm}{f^{\mathbb{P}}_{2}(d,m,m-2;q)}
\newcommand{\fA}{f^{\A}_r(d,m,k;q)}
\newcommand{\fP}{f^{\PP}_r(d,m,k;q)}
\newcommand{\eA}{e^{\mathbb{A}}_{r}(d,m,k;q)}
\newcommand{\eP}{e^{\mathbb{P}}_{r}(d,m,k;q)}

\newcommand{\dmkq}{(d,m,k;q)}
\newcommand{\fqb}{\overline{\mathbb{F}}_q}

\newcommand{\sat}{\text{sat}}
\newcommand{\lex}{\text{lex}}
\newcommand{\cdeg}{\text{c-}\deg}

\title{A $k$-Dimensional Version of the Largest Intersection Problem}
\author{Yuxin Lin}
\date{\today}

\begin{document}

\begin{abstract}
Suppose we have $r$ linearly independent hypersurfaces of degree $d$ in $\PP^m$ (or $\A^m$) defined over a finite field $\Fq$, whose intersection is at most $k$-dimensional. What is the largest possible number of $\Fq$-rational points in the intersection?
We conjecture an exact formula for this problem in both the projective and affine settings, assuming $q\geq d+1$.
The case $k=m-1$ recovers the Beelen-Datta-Ghorpade conjecture \cite{beelen2022combinatorial} and the case $k=0$ recovers the zero-dimensional conjecture in \cite{lin2026largest}.

Another interesting special case of the conjecture is $k=m-r$, which corresponds to the complete intersection of $r$ degree $d$ polynomials.
The case $r=1$, $k=m-1$ was proven by Serre in \cite{serre1991lettre} who showed that if $F$ is a degree $d$ homogeneous polynomial, then $|V(F)(\Fq)|\leq dq^{m-1}+\pi_{m-2}(q)$.
We prove the case $r=2$ and $k=m-2$, that is,
if $F_1, F_2$ are coprime, degree $d$ homogeneous polynomials, then $|V(F_1,F_2)(\Fq)|\leq d^2q^{m-2}+\pi_{m-3}(q)$.
\end{abstract}

\maketitle

\section{Introduction}

Let $\Fq$ be the finite field of size $q$.
Let $S(m,\Fq)=\Fq[X_0,\dots,X_m]$ and $S_d(m,\Fq)$ be its $d$-th graded component. Correspondingly, let $A(m,\Fq)=\Fq[x_1, \dots, x_m]$ and $A_{\leq d}(m,\Fq)$ be the vector subspace consisting of polynomials of degree $\leq d$. 
Given a subset $W\subseteq S_d(m,\Fq)$, we denote the common vanishing set by $V(W) \subseteq \PP^{m}$. Analogously for a subset $W\subseteq A_{\leq d}(m,\Fq)$, we denote the common zero set by $Z(W) \subseteq \A^{m}$.

The following problem has been studied in several recent papers. Given $r$ linearly independent polynomials in $S_d(m,\Fq)$ (respectively $A_{\leq d}(m,\Fq)$), what is the largest number of $\Fq$-rational points in the common vanishing set (respectively common zero set)?
For $1\leq r\leq \binom{m+d}{d}=\dim_{\Fq}(S_d(m,\Fq))$, these values are denoted by
\[
e^\PP_r(d,m;q)
:=\max\{|V(W)(\Fq)| : W\subseteq S_d(m,\Fq), \dim(W)=r\}.
\]
\[e^{\A}_r(d,m;q):=\max\{|Z(W)(\Fq)| : W\subseteq A_{\leq d}(m,\Fq), \dim(W)=r\}.\]
We would like to note that the problem of computing $e^{\PP}_r(d,m;q)$ (respectively $e^{\A}_r(d,m;q)$) is equivalent to computing the $r$-th generalized Hamming weight of the projective Reed-Muller code $\mathrm{PRM}_q(d,m)$ (respectively affine Reed-Muller code $\mathrm{RM}_q(d,m)$).

An exact formula for $e^{\A}_r(d,m;q)$ was proven by Heijnen and Pellikaan \cite{heijnen1998generalized} in 1998.
We will introduce some notations to state their result.
Let $\mathbb{N}$ be the set of non-negative integers. For positive integers $d$ and $m$, define
\[
\Omega(d,m)
:=\Big\{
(\beta_1,\dots,\beta_{m+1})\in \mathbb{N}^{m+1}
: \sum_{i=1}^{m+1} \beta_i=d
\Big\}.
\]
For $1\leq r\leq |\Omega(d,m)|=\binom{m+d}{d}$, let $w_r(d,m)=(\beta_1,\dots,\beta_{m+1})$ be its $r$-th largest element under lexicographical ordering. Then, define
\[
H_r(d,m;q)
:=\sum_{i=1}^m \beta_iq^{m-i}.
\]

\begin{theorem}\cite{heijnen1998generalized}\label{thm: affine BDG}
Given $m,d\geq 1$, $1\leq r\leq \binom{m+d}{d}$ and $q\geq d+1$, we have
\[
e^{\A}_r(d,m;q)
= H_r(d,m;q).
\]
\end{theorem}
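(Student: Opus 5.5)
The plan is to follow the Heijnen--Pellikaan strategy: reduce counting zeros of a subspace $W\subseteq A_{\leq d}(m,\Fq)$ to a combinatorial statement about footprints (standard monomials) of a Gröbner basis, and then optimize that count. We work in the ring $R=\Fq[x_1,\dots,x_m]/(x_1^q-x_1,\dots,x_m^q-x_m)$. Since $q\geq d+1$, every polynomial of degree $\leq d$ is already reduced, so $A_{\leq d}(m,\Fq)$ embeds injectively into $R$ and $\dim W$ is preserved. Evaluation at the points of $\A^m(\Fq)$ identifies $R$ with the functions $\Fq^m\to\Fq$. Consequently
\[
|Z(W)(\Fq)|=\dim_{\Fq} R/(W)=\#\{\text{monomials } x^a,\ a_i\leq q-1,\ \text{not divisible by any } \mathrm{LM}(g),\ g\in (W)\},
\]
where we fix a degree-compatible monomial order, say graded lex. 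The footprint bound bounds this number from above by the number of reduced monomials not divisible by any leading monomial of an element of $W$ itself. Choosing a basis of $W$ in echelon form gives $r$ distinct leading monomials $M_1,\dots,M_r$, each of degree $\leq d$.

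\textbf{Upper bound.} It remains to prove the combinatorial inequality: for any $r$ distinct monomials of degree $\leq d$ with exponents $\leq q-1$, the number of monomials $x^a$ with $a\in\{0,\dots,q-1\}^m$ divisible by none of them is at most $H_r(d,m;q)$. Equivalently, we minimize the size of the union of the shadows (upsets) $\{x^a: M_i\mid x^a\}$ inside the box $\{0,\dots,q-1\}^m$. Homogenizing, monomials of degree $\leq d$ in $m$ variables correspond to $\Omega(d,m)$ via $\beta_{m+1}=d-\sum\beta_i$. I would prove by induction on $m$ and $r$ that the upset size is minimized by the $r$ lex-largest elements. In each of those sets one slices along the power of $x_1$ and applies the one-variable-fewer statement to each slice, using a compression (shifting) argument in the style of Clements--Lindström/Kruskal--Katona. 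For the extremal configuration $w_1,\dots,w_r$ one then computes the complement count explicitly. Since $w_r=(\beta_1,\dots,\beta_{m+1})$, the non-divisible monomials turn out to number $\sum_i\beta_i q^{m-i}$: the first chosen monomial $x_1^d$ cuts $x_1$-exponents to below $d$, and each subsequent one cuts further in the mixed-radix pattern. The hypothesis $q\geq d+1$ is used here, since it guarantees that no box truncation interferes with the pattern.

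\textbf{Lower bound.} For equality, we exhibit a subspace $W$ with $r$ polynomials whose zero set has exactly $H_r$ points. With $w_r=(\beta_1,\dots,\beta_{m+1})$, take a set $Z$ of the following shape: $\beta_1$ values of $x_1$ times everything, plus, for a fixed additional value of $x_1$, $\beta_2$ values of $x_2$ times $\Fq^{m-2}$, and so on. This is a disjoint union of affine subspaces of sizes $\beta_iq^{m-i}$, and it has size $H_r$. The polynomials of degree $\leq d$ vanishing on $Z$ contain the products $\prod(x_1-c_j)\cdots$ with leading monomials exactly $w_1,\dots,w_r$, so a suitable $r$-dimensional subspace vanishes on $Z$.

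The main obstacle will be the combinatorial extremal step: showing that initial lex segments minimize the upset within the box. I would try first to reduce it, slice by slice in $x_1$, to the $(m-1)$-variable case plus a convexity/exchange argument, handling carefully the case when $r$ straddles blocks of a fixed $\beta_1$.
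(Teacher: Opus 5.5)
The paper does not prove this theorem (it is quoted from Heijnen--Pellikaan), so there is no internal proof to compare with. Judged on its own, your outline is the right one, but its central step is missing.

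\textbf{What is correct.} The parts you carry out are fine.
\begin{itemize}
\item The standard monomials of $J=(W)+(x_1^q-x_1,\dots,x_m^q-x_m)$ number exactly $|Z(W)(\Fq)|$. They lie in the downset $\Lambda\subseteq[0,q-1]^m$ of box monomials divisible by none of the $r$ distinct leading monomials $M_1,\dots,M_r$.
\item Your lower-bound set and vanishing products are, up to the choice of the fixed extra values, the construction of Section~\ref{sec: affine conjecture equality} specialized to $k=m-1$.
\item For $\beta=(\beta_1,\dots,\beta_m)$, a box monomial $x^a$ is divisible by some $x^b$ with $\deg b\le d$ and $b\ge_{\lex}\beta$ iff $a\ge_{\lex}\beta$. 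So the lex segment leaves exactly $H_r(d,m;q)$ monomials.
\end{itemize}

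\textbf{The gap.} The missing piece is the extremal claim that the $r$ lex-largest monomials minimize the upset. This is the entire content of the upper bound, and you only announce it.

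Your slicing plan does not go through as stated. The slice of the upset at $x_1$-exponent $c$ is the upset of $\bigcup_{j\le c}\mathcal M_j$, where the $x_1^j$-section $\mathcal M_j$ has degree bound $d-j$. This has three consequences:
\begin{itemize}
\item the $(m-1)$-variable statement must be available for every degree bound;
\item you need an extra fact to handle unions, e.g.\ that for $\deg\gamma\le D\le q-1$ the upset of $\{b:\deg b\le D,\ b\ge_{\lex}\gamma\}$ is $\{a:a\ge_{\lex}\gamma\}$, independently of $D$;
\item you are then left with a genuine optimization over the section sizes $|\mathcal M_j|$.
\end{itemize}
That optimization is where $q\ge d+1$ does real work, not merely in the final count. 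For example, $\{x_1^d,x_2^d\}$ leaves $d^2q^{m-2}$ monomials and $\{x_1^d,x_1^{d-1}x_2\}$ leaves $((d-1)q+1)q^{m-2}$. The latter is at least as large only because $q\ge d+1$, with equality at $q=d+1$.

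\textbf{How to close it.} A clean route is the argument of Section~\ref{sec: EGH and affine}.
\begin{itemize}
\item $\Lambda$ is the set of standard monomials of the monomial ideal $(M_1,\dots,M_r,x_1^q,\dots,x_m^q)$. By the Clements--Lindstr\"om theorem (EGH for ideals containing $x_1^q,\dots,x_m^q$), there is a closed, compressed $\Lambda^*\subseteq[0,q-1]^m$ with the same number of elements in each degree.
\item Thus $|\Lambda^*|=|\Lambda|$, and $\Lambda^*$ has at most $\binom{m+d}{d}-r$ elements of degree $\le d$.
\item The exchange argument proving Proposition~\ref{prop: compression lemma} uses only closedness, compression and a degree threshold. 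It therefore applies verbatim in the box $[0,q-1]^m$, with $\Omega'$ replaced by the elements of degree $\le d$.
\item Then argue exactly as in the proof of Proposition~\ref{prop: Ls is optimal}. Suppose $N=|\Lambda^*|>H_r(d,m;q)=\#\{a:a<_{\lex}\beta\}$. The lex-initial segment $\Sigma_N$ contains $\beta$ and all $\binom{m+d}{d}-r$ elements of degree $\le d$ lex-below it. That is too many elements of degree $\le d$, contradicting the bound for $\Lambda^*$.
\end{itemize}
Hence $|Z(W)(\Fq)|\le|\Lambda|\le H_r(d,m;q)$.
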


Let $\pi_m(q):=|\PP^m(\Fq)|=\frac{q^{m+1}-1}{q-1}$. If $m<0$, we set $\pi_m(q)=0$.
Boguslavsky and Tsfasman \cite{boguslavsky1997number} in 1997 conjectured an exact formula for $e^\PP_r(d,m;q)$.


\begin{conjecture}[Boguslavsky-Tsfasman conjecture]\cite[Conjecture 3, Corollary 5]{boguslavsky1997number}\label{TBC}\\
Given $m,d\geq 1$, $1 \leq r \leq \binom{m+d}{d}$. Write $w_r(d,m)=(\beta_1,\dots,\beta_{m+1})$ and let $l$ be the smallest index such that $\beta_l\neq 0$. Then, for $q\geq d+1$, we have
$$e^{\PP}_r(d,m;q)=\sum_{i=l}^{m}\beta_{i}(\pi_{m-i}(q)-\pi_{m-i-l}(q)) +\pi_{m-2l}(q).$$
\end{conjecture}

It was shown by Datta and Ghorpade \cite{datta2015conjecture} that this conjecture is false for $r> m+1$.
Beelen, Datta and Ghorpade \cite{beelen2022combinatorial} conjectured a new exact formula for $e_r^{\PP}(d,m;q)$.

\begin{conjecture}[Beelen-Datta-Ghorpade Conjecture]\label{Conj: complete GDC}\cite{beelen2022combinatorial} \cite[Proposition 1.2]{lin2026largest}\\
Given $m,d\geq 1$, $1 \leq r \leq \binom{m+d}{d}$. Pick the unique $1\leq l\leq m+1$, for which
$$\tbinom{m+d}{d}-\tbinom{m+d+1-l}{d} <r \leq \tbinom{m+d}{d}-\tbinom{m+d-l}{d}.$$
Equivalently, write $w_r(d,m)=(\beta_1, \dots, \beta_{m+1})$, then $l$ is the smallest index for which $\beta_l \neq 0$.

Then for $q\geq d+1$ we have
\begin{equation}\label{Equation:projective}
e_r^\PP(d,m;q)=H_r(d,m;q)+\pi_{m-l-1}(q).
\end{equation}
\end{conjecture}

The formula of $e^{\PP}_r(d,m;q)$ has been proven in the following cases.
\begin{enumerate}
\item The case $r=1$ was shown by Serre \cite{serre1991lettre} and S{\o}rensen \cite{sorensen1991projective} both in 1991, although S{\o}rensen's proof is known to have a flaw.

\item The case $r=2$ was proven by Boguslavsky \cite{boguslavsky1997number} in 1997.

\item The case $d=2$ was proven by Zanella \cite{zanella1998linear} in 1998.

\item The case $r\leq m+1$, the case $d=1$ and the case $m=1$ were proven by Datta and Ghorpade \cite{datta2017number} in 2017.

\item The case $r\leq \binom{m+2}{2}$ was proven by Beelen, Datta and Ghorpade \cite{beelen2018maximum} in 2018.

\item The case $\binom{m+d}{d}-d\leq r\leq \binom{m+d}{d}$ was proven by Datta and Ghorpade \cite{datta2017remarks} in 2017. This range of $r$ corresponds to $l\in \{m,m+1\}$.
\item The case $m=2$ was proven by Lin and Singhal \cite{lin2026largest} in 2026.
\end{enumerate}
All of these cases involved specific ranges of $r$ or special values of $m,d$, but allowed for any $q\geq d+1$.
In \cite[Theorem 3]{singhal2025conjecture}, the authors showed that Conjecture~\ref{Conj: complete GDC} is true for any $m$, $d$ and $r$ when $q$ is sufficiently large.




\subsection{$k$-dimensional conjecture}\label{subsec: k-dim conj}

For $0 \leq k \leq m-1$ and $m-k\leq r\leq \binom{m+d}{d}$, denote
\begin{equation}
e_r^{\A}(d,m,k;q)
:=\max\{|Z(W)(\Fq)| : W\subseteq A_{\leq d}(m, \Fq), \dim(W)=r,\dim(Z(W))\leq k\}.
\end{equation}
\begin{equation}
e_r^{\PP}(d,m,k;q)
:=\max\{|V(W)(\Fq)| : W\subseteq S_{d}(m, \Fq), \dim(W)=r,\dim(V(W))\leq k\}.
\end{equation}

This definition is similar to $e_r^{\PP}(d,m;q)$ (respectively $e_r^{\A}(d,m;q)$), except for the requirement that $V(W)$ (respectively $Z(W)$) be of dimension at most $k$. We need $r\geq m-k$, as otherwise the vanishing set of $r$ polynomials cannot be of dimension~$\leq k$.

The major focus of this paper is to formulate  conjectures for exact formulae of $\eA$ and $\eP$.
We will introduce some notations to state our conjectures.

For integers $d,m \geq 1$, $0 \leq k \leq m-1$ and $q \geq d+1$, define
\begin{equation*}
\begin{split}
    \B(d,m,k;q):=\{(\alpha_1,\dots,\al_m) \in \mathbb{N}^{m} \mid  &\; 0 \leq \al_i \leq d-1 \text{ for } 1 \leq i \leq m-k,\\
    &\; 0 \leq \alpha_j \leq q-1 \text{ for } m-k+1 \leq j \leq m\}.
    \end{split}
\end{equation*}
For an element $w=(\al_1, \dots, \al_m) \in \B(d,m,k;q)$, let $\deg(w)=\sum_{j=1}^{m}\al_j$.

Define $\Omega'(d,m,k;q) \subseteq \B(d,m,k;q)$, such that
\begin{equation}\label{eq: omegap}
\begin{split}
    \Omega'(d,m,k;q):=\{w \in \B(d,m,k;q) \mid \deg(w) \leq d \}.
    \end{split}
\end{equation}

Notice that $\Omega'(d,m,k;q)=\Omega(d,m) \backslash \CC(d,m,k)$, where
\begin{equation}
    \CC(d,m,k):=\{(d,0,\dots,0), \dots, (\underbrace{0,\ldots,0,d}_{m-k},
  0,\ldots,0)\} .
\end{equation}
So $|\CC(d,m,k)|=m-k$ and hence
$|\Omega'(d,m,k;q)|=\binom{m+d}{d}-(m-k)$.

For $m-k\leq r\leq \binom{m+d}{d}$, let $w'_{r-(m-k)}(d,m,k;q)=(\al_1, \dots, \al_m)$ be its $(r-(m-k))$-th largest element under lexicographical ordering. Then, define:
\begin{equation}\label{eq: formula of frA}
\fA :=\sum_{j=1}^{m-k-1}\al_j d^{m-k-j}q^k +\sum_{j=m-k}^m \al_j q^{m-j}.
\end{equation}
If $r=m-k$, then $(d,0, \dots,0)$ is considered to be the $0$-th largest element of $\Omega'(d,m,k;q)$ and
\[
f^{\A}_{m-k}(d,m,k;q) :=d^{m-k} q^k .
\]

We conjecture the following.

\begin{conjecture}[Affine conjecture]\label{conj:affine conj 1}
Given $d,m\geq 1$, $0\leq k\leq m-1$, $m-k\leq r\leq \binom{m+d}{d}$ and $q\geq d+1$,
we have
\[
  \eA = \fA.
\]
\end{conjecture}

\begin{conjecture}[Projective conjecture]\label{conj:projective-geometric}
Given $d,m\geq 1$, $0\leq k\leq m-1$, $m-k\leq r\leq \binom{m+d}{d}$ and $q\geq d+1$, write $w'_{r-(m-k)}(d,m,k;q)=(\al_1, \dots, \al_m)$. Let $l$ be the smallest index such that $\al_l \neq 0$.
Let $s = \min(m-l-1,k-1)$. 
Then we have
\[
\eP = \fA+\pi_{s}(q).
\] 
\end{conjecture}

We denote this conjectured formula by
\[
\fP:=\fA+\pi_{s}(q).
\]
When $r=\binom{m+d}{d}$, then $w'_{r-(m-k)}(d,m,k;q)=(0, \dots, 0)$ and we take $l=m+1$.

Note that our conjectures provide a natural formalism that connects Conjecture \ref{Conj: complete GDC} and \cite[Conjecture 1.6]{lin2026largest}.
\begin{enumerate}
\item When $k=m-1$, we have $\fA=H_r(d,m;q)$  and $\fP=H_r(d,m;q)+\pi_{m-l-1}(q)$. Therefore, the statement of Conjecture~\ref{conj:affine conj 1} becomes Theorem~\ref{thm: affine BDG}, and the statement of Conjecture~\ref{conj:projective-geometric} becomes Conjecture~\ref{Conj: complete GDC}.

\item When $k=0$, we have $\fA=\fP$ and this is the same as the conjectured formula of \cite[Conjecture 1.6]{lin2026largest} (denoted there as $H'_{r-(m-1)}(d,m)$). Therefore, for $k=0$, the statements of Conjecture~\ref{conj:affine conj 1} and Conjecture~\ref{conj:projective-geometric} become \cite[Conjecture 1.6]{lin2026largest}.
    
\item When $r=m-k$, then Conjecture \ref{conj:affine conj 1} states that the complete intersection of $m-k$ degree $d$ hypersurfaces in $\A^m$ has at most $d^{m-k}q^k$ rational points over $\Fq$. This is proven by Lachaud and Rolland in \cite{lachaud2015number}.

\item  When $r=m-k$, then Conjecture \ref{conj:projective-geometric} states that the complete intersection of $m-k$ degree $d$ hypersurfaces in $\PP^m$ has at most $d^{m-k}q^k+\pi_{k-1}(q)$ rational points over $\Fq$. Lachaud and Rolland construct examples of complete intersections that achieve this bound in \cite{lachaud2015number}, but the bound is only proven for $r=1$, $k=m-1$.
\end{enumerate}

In Proposition \ref{prop :affine-geometric-equality} and Proposition \ref{prop: equal proj}, we show that our conjectured formulae are at least lower bounds for $e_r^{\A}(d,m,k;q)$ and $e_r^{\PP}(d,m,k;q)$.
\begin{proposition}
For positive integers $m,d,r$ with $0 \leq k \leq m-1$, $m-k\leq r\leq \binom{m+d}{d}$ and $q\geq d+1$, we have
\[
e_r^{\A}(d,m,k;q)\geq \fA,
\]
\[\eP \geq \fP.\]
\end{proposition}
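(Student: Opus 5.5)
The plan is an explicit construction. For the affine bound I build a subspace $W$ whose zero set is a product-grid set of dimension at most $k$, and count its points by a lexicographic argument in the spirit of Heijnen--Pellikaan. For the projective bound I homogenize the same $W$ and count the extra points at infinity.

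\textbf{Affine construction.} Since $q\geq d+1$, I fix for each $1\leq i\leq m-k$ a set $S_i=\{a_{i,0},\dots,a_{i,d-1}\}\subseteq\Fq$ of $d$ distinct elements, and for $m-k<i\leq m$ an enumeration $S_i=\Fq=\{a_{i,0},\dots,a_{i,q-1}\}$.
\begin{itemize}
\item For $i\leq m-k$ let $g_i=\prod_{j=0}^{d-1}(x_i-a_{i,j})$.
\item For $\beta\in\B(d,m,k;q)$ let $P_\beta=\prod_{i=1}^m\prod_{j<\beta_i}(x_i-a_{i,j})$. If $\beta\in\Omega'(d,m,k;q)$, then $\deg P_\beta=\deg(\beta)\leq d$.
\end{itemize}
Let $W$ be spanned by $g_1,\dots,g_{m-k}$ together with the $P_\beta$ for the $r-(m-k)$ lexicographically largest $\beta\in\Omega'(d,m,k;q)$; write $\Set$ for this set of $\beta$'s, whose smallest element is $w'=w'_{r-(m-k)}(d,m,k;q)$. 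The polynomials are linearly independent because their leading monomials are distinct: $x_i^d$ for $g_i$, and $x^\beta$ with every $\beta_i\leq d-1$ for $i\leq m-k$ for $P_\beta$. Moreover $Z(W)\subseteq Z(g_1,\dots,g_{m-k})=S_1\times\cdots\times S_m$, which has dimension $k$. The case $r=m-k$ gives exactly $d^{m-k}q^k$ points.

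\textbf{Point count.} Identify a point $(a_{1,c_1},\dots,a_{m,c_m})$ of the grid with its index vector $c\in\B(d,m,k;q)$. Then $P_\beta$ is nonzero at $c$ iff $\beta\leq c$ componentwise. I claim
\[
Z(W)(\Fq)=\{c\in\B(d,m,k;q): c<_{\lex} w'\}.
\]
\begin{itemize}
\item If $c<_{\lex}w'$ and $\beta\leq c$ componentwise, then $\beta\leq_{\lex} c<_{\lex} w'$, so $\beta\notin\Set$. Hence every $P_\beta$ with $\beta\in\Set$ vanishes at $c$.
\item If $c\geq_{\lex} w'$, define $\beta$ as the degree-$d$ truncation of $c$: keep $c_1,c_2,\dots$ until the running sum would exceed $d$, cap that coordinate so the total is $d$, and set the rest to $0$. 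If $\deg c\leq d$, take $\beta=c$. Then $\beta\leq c$ componentwise, and $\beta\in\Omega'$ because its first $m-k$ entries are at most $d-1$.
\end{itemize}
The key step is checking that $\beta\geq_{\lex}w'$, using $\deg w'\leq d$. I expect this to be the main obstacle, and I would argue as follows. Let $t$ be the first index where $c$ and $w'$ differ. If the truncation happens after position $t$, then $\beta$ agrees with $c$ through $t$ and so $\beta>_{\lex}w'$. If it happens at or before $t$, the degree bound on $w'$ forces $\beta\geq_{\lex}w'$. Granting this, $P_\beta$ does not vanish at $c$.

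Finally, the number of box elements lexicographically smaller than $(\alpha_1,\dots,\alpha_m)$ is
\[
\sum_j \alpha_j\prod_{i>j}|S_i|=\sum_{j=1}^{m-k-1}\alpha_j d^{m-k-j}q^k+\sum_{j=m-k}^{m}\alpha_j q^{m-j}=\fA,
\]
which proves the affine bound.

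\textbf{Projective bound.} Homogenize all generators to degree $d$ using $X_0$. The affine chart $X_0\neq 0$ contributes $\fA$ points. On the hyperplane $X_0=0$, each generator restricts to its top-degree part: $x_i^d$ for $g_i$, and the monomial $x^\beta$ for $P_\beta$. Each $\beta\in\Set$ is $\geq_{\lex}w'$, whose first nonzero coordinate is at position $l$, so $\beta$ has a nonzero coordinate at some index $\leq l$. Each $g_i$ involves $x_i$ with $i\leq m-k$. Therefore the linear space
\[
\{X_0=0,\ x_1=\cdots=x_{\max(l,m-k)}=0\}
\]
lies in $V(W)$. Its dimension is $m-1-\max(l,m-k)=\min(m-l-1,k-1)=s$, contributing $\pi_s(q)$ points; when $l=m+1$ we get $s=k-1$ from the $g_i$ alone. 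For dimension, $V(W)\cap\{X_0=0\}\subseteq V(X_0,x_1^d,\dots,x_{m-k}^d)$ has dimension $k-1$, and the affine part has dimension at most $k$, so $\dim V(W)\leq k$. This gives $\eP\geq\fA+\pi_s(q)=\fP$.
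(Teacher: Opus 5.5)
Your proof is correct, and it reaches the bound by a genuinely different construction from the paper's.

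\textbf{The paper's route.} The paper reuses the staircase construction of \cite{lin2026largest}. It takes $W=W_1\oplus\dots\oplus W_m\oplus\langle f_{l+1},\dots,f_{m-k}\rangle$ with $W_i=g_1\cdots g_i\,x_i\,\Fq[x_i,\dots,x_m]_{\leq d-1-\sum_{j\le i}\alpha_j}$. It shows $Z(W)(\Fq)$ is a disjoint union of boxes $Y_l,\dots,Y_m$ whose sizes sum to $\fA$. It then has to verify $\dim W=r$ via a leading-monomial direct-sum argument plus the identity of Lemma~\ref{lem: r in terms of al}, which rests on an external counting lemma from \cite{singhal2025conjecture}.

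\textbf{Your route.} You take one product $P_\beta$ for each of the $r-(m-k)$ lex-largest $\beta\in\Omega'$, together with all of $g_1,\dots,g_{m-k}$. This makes $\dim W=r$ immediate. It also identifies $Z(W)(\Fq)$ directly with the compressed set $\{c\in\B : c<_{\lex}w'\}=\Lambda_{r-(m-k)}$, which is exactly the combinatorial interpretation of $\fA$ in Section~\ref{sec: conjecture formula fA}. The only real work is your truncation step, and it is sound. Let $t$ be the first index where $c$ and $w'$ differ. If the cut occurs at an index $p\le t$, then $\beta_i=w'_i$ for $i<p$ and $\beta_p=d-\sum_{i<p}w'_i\ge w'_p$; equality forces $\beta=w'$, since $\deg w'\le d$.

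Including every $g_i$ also gives $\dim V(\Wb)\le k$ at once. Since only a lower bound is needed, you rightly use just the containment of $\{X_0=0,\ X_1=\dots=X_{\max(l,m-k)}=0\}$ in $V(\Wb)$, rather than the exact computation of $V(\Wb)\cap V(X_0)$ in Lemma~\ref{lem: point at infinity}. What the paper's version buys in exchange is an exact description of the extremal configuration, including its locus at infinity, continuous with the zero-dimensional case.

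\textbf{One slip.} Your closing remark about $l=m+1$ is wrong. That case means $r=\binom{m+d}{d}$, so $s=\min(m-l-1,k-1)=-2$ and $\pi_s(q)=0$. In fact $\Wb=S_d(m,\Fq)$ and $V(\Wb)=\emptyset$, so there is no $\PP^{k-1}$ at infinity. The inequality is trivial there anyway, since $\fP=0$.

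\textbf{Two harmless imprecisions.}
\begin{enumerate}
\item For $\deg\beta<d$, the homogenized $P_\beta$ is divisible by $X_0$ and restricts to $0$ at infinity, not to $x^\beta$.
\item Over $\overline{\mathbb{F}}_q$, $Z(g_1,\dots,g_{m-k})$ is $S_1\times\cdots\times S_{m-k}\times\A^k$; the full product $S_1\times\cdots\times S_m$ is only its set of rational points.
\end{enumerate}
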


We show that the projective conjecture for $k=m-2$ implies Conjecture \ref{Conj: complete GDC}.
\begin{proposition}\label{prop: projective m-2 implies BDG} 
Suppose $d\geq 1, m \geq 2,2\leq r\leq \binom{m+d}{d}$ and $q\geq d+1$. 
If we know that for each $1\leq d'\leq d$ satisfying $r\leq \binom{m+d'}{d'}$,
\[
e^{\PP}_r(d',m,m-2;q) 
=f^{\PP}_r(d',m,m-2;q),
\]
then we have that 
\[
e^{\PP}_r(d,m,m-1;q) 
=f^{\PP}_r(d,m,m-1;q).
\]
\end{proposition}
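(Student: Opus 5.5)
The plan is a case split on whether $V(W)$ has a component of dimension $m-1$. Fix $W\subseteq S_d(m,\Fq)$ with $\dim W=r\geq 2$. Since $W\neq 0$, we always have $\dim V(W)\leq m-1$. Let $G=\gcd(W)$ and $e=\deg G$.

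\emph{Case $e=0$.} Here the members of $W$ have no common factor, so $V(W)$ contains no hypersurface and $\dim V(W)\leq m-2$. The hypothesis with $d'=d$ gives $|V(W)(\Fq)|\leq f^{\PP}_r(d,m,m-2;q)$. It then suffices to prove the purely combinatorial inequality $f^{\PP}_r(d,m,m-2;q)\leq f^{\PP}_r(d,m,m-1;q)=H_r(d,m;q)+\pi_{m-l-1}(q)$. I would prove this directly from the definitions. The set $\Omega'(d,m,m-2;q)$ is $\Omega(d,m)$ with $(d,0,\dots,0)$ and $(0,d,0,\dots,0)$ removed, so the $(r-2)$-th element of $\Omega'$ is the $(r-1)$- or $r$-th element of $\Omega(d,m)$. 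Comparing the weights $\al_1 d q^{m-2}$ against $\beta_1q^{m-1}$, using $d\leq q-1$ and $\al_1\leq d-1$, gives the affine parts. The $\pi_s$ terms are then compared using $s=\min(m-l-1,m-3)\leq m-l-1$, with a little care when the shift changes $l$.

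\emph{Case $e\geq 1$.} Write $W=G\cdot W'$ with $W'\subseteq S_{d-e}(m,\Fq)$, $\dim W'=r$ and $\gcd(W')=1$. Since $r\geq 2$, we have $d-e\geq 1$, and $r\leq\binom{m+d-e}{d-e}$ automatically. Thus $\dim V(W')\leq m-2$, and the hypothesis with $d'=d-e$ gives $|V(W')(\Fq)|\leq f^{\PP}_r(d-e,m,m-2;q)$. Also $V(W)=V(G)\cup V(W')$, and Serre's bound gives $|V(G)(\Fq)|\leq eq^{m-1}+\pi_{m-2}(q)$.

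The naive union bound is too weak. For $r=2$, $e=d-1$ it overshoots $f^{\PP}_2(d,m,m-1;q)$ by $\pi_{m-3}(q)$, so the main obstacle is controlling the overlap. I would first try an induction on $e$ that peels off one factor at a time. If $G$ has an $\Fq$-linear factor $L$, write $V(W)=V(L)\cup V(W/L)$. Apply the conjecture for $k=m-1$ and degree $d-1$, available by induction on $d$ since the hypothesis also covers smaller $d'$, to $V(W/L)$. Then count only the points of $V(L)\setminus V(W/L)$, viewing $V(L)\cong\PP^{m-1}$ and $V(W/L)\cap V(L)$ as the vanishing set of the restricted space. The restricted space has dimension $r$ minus the part of $W/L$ divisible by $L$, and Theorem~\ref{thm: affine BDG} or the Datta–Ghorpade cases bound it.

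If $G$ has no $\Fq$-linear factor, Serre's bound improves by a lot: for $\Fq$-irreducible components of degree $\geq 2$ one has roughly $|V(G)(\Fq)|\leq (e-1)q^{m-1}+\cdots$. In that case the crude union bound should already fit under $H_r(d,m;q)+\pi_{m-l-1}(q)$.

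The final step in both subcases is again an inequality between the combinatorial quantities $H_r$, $f^{\PP}_r(d-e,\cdot)$ and the Serre term. I would organize this inequality by the position of $l$, that is, by which block of lexicographic order $r$ falls in.
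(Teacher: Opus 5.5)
Your case $e=0$ is sound and matches the paper. Your direct observation that $\gcd(W)=1$ already excludes a hypersurface component is a fine substitute for the paper's coprime pair, and the comparison $f^{\PP}_r(d,m,m-2;q)\le f^{\PP}_r(d,m,m-1;q)$ is Proposition~\ref{prop: fA increase} together with $\min(m-l-1,m-3)\le m-l-1$. Your no-linear-factor subcase is also the paper's route, and it does close, but you should make the inequality explicit:
\begin{itemize}
\item Since $W'$ lives in degree $d-e\le d-1$, we have $r\le\binom{m+d-1}{d-1}$. This forces $l=1$, so the target is $H_r(d,m;q)+\pi_{m-2}(q)$ and there is nothing to organize by $l$.
\item Homma's bound (Lemma~\ref{lem: no linear divisor hypersurface}) gives $|V(G)(\Fq)|\le (e-1)q^{m-1}+eq^{m-2}+\pi_{m-3}(q)$.
\item Proposition~\ref{prop: fA increase} gives $f^{\PP}_r(d-e,m,m-2;q)\le H_r(d-e,m;q)+\pi_{m-3}(q)$.
\item Lemma~\ref{lemma: d-c} gives $H_r(d-e,m;q)+eq^{m-1}\le H_r(d,m;q)$.
\item Finally, $(e-1)q^{m-2}+\pi_{m-3}(q)\le q^{m-1}$ because $e\le d-1\le q-2$.
\end{itemize}

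The genuine gap is in the linear-factor subcase. You bound $|V(W/L)(\Fq)|$ projectively and then add only $|(V(L)\setminus V(W/L))(\Fq)|=\pi_{m-1}(q)-|(V(L)\cap V(W/L))(\Fq)|$. An upper bound on this requires a \emph{lower} bound on the number of $\Fq$-points of the restricted system on $V(L)\cong\PP^{m-1}$. Theorem~\ref{thm: affine BDG} and the Datta--Ghorpade results give only upper bounds, so they point the wrong way. With the trivial lower bound $0$ you get $H_r(d-1,m;q)+\pi_{m-l'-1}(q)+\pi_{m-1}(q)$, where $l'$ is the index for $w_r(d-1,m)$. This overshoots the target by $\pi_{m-l'-1}(q)$.

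A concrete case: take $m=3$, $d=r=2$, $W=X_0\langle X_1,X_2\rangle$. Then $|V(W)(\Fq)|=q^2+2q+1$ attains the target. Your projective bound $q+1$ on $V(X_1,X_2)$ is tight, and you would have to know that the overlap point $[0:0:0:1]$ exists in order to subtract it.

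The missing idea is never to count $V(W/L)$ projectively. Instead:
\begin{itemize}
\item Count all $\pi_{m-1}(q)$ points of $V(L)$.
\item Bound $V(W/L)\setminus V(L)$ inside the affine chart $\PP^m\setminus V(L)\cong\A^m$. There the dehomogenizations of $W/L$ are $r$ linearly independent polynomials of degree $\le d-1$, so Heijnen--Pellikaan gives at most $H_r(d-1,m;q)$ points.
\end{itemize}
This is exactly Lemma~\ref{lem: linear factor}. Combined with $H_r(d-1,m;q)+q^{m-1}\le H_r(d,m;q)$ and $l=1$, it yields $H_r(d,m;q)+\pi_{m-2}(q)$ directly. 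No induction on $d$ or $e$ is needed, and the argument also covers $G$ with both linear and nonlinear factors, since only the single common linear factor $L$ is used.
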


Finally, we focus on Conjecture \ref{conj:affine conj 1} and Conjecture \ref{conj:projective-geometric} in the special case of complete intersections, that is $r=m-k$. 
For $r=m-k$, the conclusion of Conjecture \ref{conj:affine conj 1} was proven by \cite{lachaud2015number}. 
In this paper, we prove Conjecture \ref{conj:projective-geometric} in the case where $k=m-2$ and $r=2$.

\begin{theorem}\label{prop: r=m-k,k=m-2}
Given positive integers $d,m$ with $m \geq 2$ and $q\geq d+1$, we have
\[
e^{\PP}_2(d,m,m-2;q)=f_2^{\PP}(d,m,m-2;q).
\]
\end{theorem}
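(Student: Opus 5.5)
The plan is to establish the two inequalities separately. For $r=m-k=2$ and $k=m-2$, the relevant element of $\Omega'(d,m,m-2;q)$ is the ``$0$-th'' element $(d,0,\dots,0)$. Hence $l=1$, $s=\min(m-2,m-3)=m-3$, and
\[
f_2^{\PP}(d,m,m-2;q)=d^2q^{m-2}+\pi_{m-3}(q).
\]
The lower bound $e^{\PP}_2(d,m,m-2;q)\geq f_2^{\PP}(d,m,m-2;q)$ is already given by Proposition~\ref{prop: equal proj}; alternatively one can use the Lachaud--Rolland construction. So the work is the upper bound.

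Let $F_1,F_2\in S_d(m,\Fq)$ be linearly independent with $\dim V(F_1,F_2)\le m-2$, i.e.\ $\gcd(F_1,F_2)=1$, and put $X=V(F_1,F_2)$. We want to show $|X(\Fq)|\le d^2q^{m-2}+\pi_{m-3}(q)$, and we argue by induction on $m$.

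\textbf{Base case $m=2$.} Here $F_1,F_2$ are coprime plane curves. B\'ezout gives $|X(\Fq)|\le d^2$, and $\pi_{-1}(q)=0$, so the bound holds.

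\textbf{Inductive step.} For $m\ge3$ we use a hyperplane incidence count. Each $\Fq$-point of $\PP^m$ lies on exactly $\pi_{m-1}(q)$ of the $\pi_m(q)$ rational hyperplanes, so
\[
\pi_{m-1}(q)\,|X(\Fq)|=\sum_{H}|X\cap H(\Fq)|.
\]
We split the hyperplanes into two kinds.
\begin{enumerate}
\item \emph{Good} hyperplanes are those with $F_1|_H,F_2|_H$ linearly independent and coprime in $H\cong\PP^{m-1}$. For these the induction hypothesis gives $|X\cap H(\Fq)|\le d^2q^{m-3}+\pi_{m-4}(q)$.
\item \emph{Bad} hyperplanes are the rest. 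For these $X\cap H$ has dimension $m-2$, so $H$ contains an $(m-2)$-dimensional irreducible component $Y$ of $X$.
\end{enumerate}
Since $\deg X\le d^2$, there are at most $d^2$ such components, counted with degree. We bound the number of bad hyperplanes as follows.
\begin{itemize}
\item If $Y$ is a rational linear $(m-2)$-space, it lies in exactly $q+1$ rational hyperplanes.
\item If $Y$ is nonlinear or not defined over $\Fq$, it lies in at most one rational hyperplane, namely its span if that is a hyperplane.
\end{itemize}
On a bad hyperplane we use Serre's bound inside $H$. One of $F_i|_H$ is nonzero, because $\dim X\le m-2$, and this gives $|X\cap H(\Fq)|\le dq^{m-2}+\pi_{m-3}(q)$. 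A sharper bound is available when $H$ contains only a few linear components: decompose $X\cap H$ as the union of those components and a residual piece of lower dimension, and bound the two parts separately.

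Substituting these bounds into the incidence identity and dividing by $\pi_{m-1}(q)$ should give the claimed bound. The inequality needs to be checked using $q\ge d+1$.

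\textbf{Main obstacle.} The hard part is the bookkeeping for bad hyperplanes. If $X$ has many rational linear components of dimension $m-2$, each contributes $q+1$ bad hyperplanes, and the crude Serre bound on each of them is too weak to close the inequality. To handle this, first write $X=L\cup X'$, where $L$ is the union of the rational linear $(m-2)$-dimensional components and $X'$ is the rest, with $\deg X'\le d^2-t$ when $L$ has $t$ components.
\begin{itemize}
\item Bound $|L(\Fq)|$ directly: $t$ linear spaces of dimension $m-2$ have at most $t\,q^{m-2}+\pi_{m-3}(q)$ points, with equality when they all share a common $(m-3)$-space, as in the extremal example.
\item Bound $|X'(\Fq)\setminus L|$ via the hyperplane count above. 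Now only at most $d^2-t$ hyperplanes are bad for $X'$, and each of them costs $O(q^{m-2})$, which is absorbed by the slack $\pi_{m-1}(q)\cdot(\text{good bound})$ versus $\pi_m(q)$.
\end{itemize}
If this does not close directly, the fallback is an alternative pencil argument. Project from an $\Fq$-point of $X$, or slice by the pencil of hyperplanes through a fixed $(m-2)$-space chosen transversal to $X$. This reduces the count to $q+1$ slices, each handled by the induction hypothesis, and the common base locus contributes the extra $\pi_{m-3}(q)$ term.
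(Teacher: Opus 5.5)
\textbf{There is a genuine gap.} The whole upper bound rests on ``substituting these bounds \dots\ should give the claimed bound'', and the mechanisms you propose for that step fail for $q$ close to $d+1$.

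\emph{The global incidence identity has almost no room.} Suppose every $\Fq$-hyperplane met the inductive bound. Then
\[
\pi_{m-1}(q)\bigl(d^2q^{m-2}+\pi_{m-3}(q)\bigr)-\pi_m(q)\bigl(d^2q^{m-3}+\pi_{m-4}(q)\bigr)=q^{m-3}(q^2+q+1-d^2),
\]
which is less than one point after dividing by $\pi_{m-1}(q)$. A bad hyperplane bounded via Serre exceeds the good bound by $q^{m-3}(d(q-d)+1)$. So for $q=d+1$, even using integrality, only about $d+5$ bad hyperplanes can be absorbed; for $d=2$, $q=3$, $m=3$ the limit is $7$. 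But each rational linear component already produces $q+1$ bad hyperplanes, and the degree count only limits planar nonlinear components to about $d^2/2$. So ``absorbed by the slack'' holds only for $q$ large compared with $d$.

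\emph{The repair $X=L\cup X'$ has a false step.} The bound $|L(\Fq)|\le tq^{m-2}+\pi_{m-3}(q)$ fails for two skew $\Fq$-lines in $\PP^3$, which have $2q+2$ points. This actually occurs. For odd $q$ and $c$ a non-square, $X=V(X_0X_2-X_1X_3,\,X_0X_3-cX_1X_2)$ has $t_W=0$, and its rational points are exactly those of the two skew lines $V(X_0,X_1)$ and $V(X_2,X_3)$. There are two further problems with this repair.
\begin{itemize}
\item On a hyperplane containing some $l_i$, the set $X'\cap H$ is not a complete intersection, so the induction hypothesis gives nothing there.
\item On good hyperplanes you would need a lower bound on $|(L\cap H)(\Fq)|$, which you never supply.
\end{itemize}

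\emph{The $(m-2)$-space pencil fallback is too weak.} Its $q+1$ slices, each of size up to $d^2q^{m-3}+\pi_{m-4}(q)$, total about $d^2q^{m-2}+d^2q^{m-3}$. A transversal base locus has dimension $m-4$, so it cannot produce the $\pi_{m-3}(q)$ term.

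\textbf{What the paper does instead: it avoids bad hyperplanes rather than absorbing them.}

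First it splits on $t_W$. If $LG_1\in W$, then $X=V(L,F_2)\cup\bigl(V(G_1,F_2)\setminus V(L)\bigr)$. Serre's bound on the first piece and the Lachaud--Rolland affine bound on the second give $dq^{m-2}+\pi_{m-3}(q)+d(d-1)q^{m-2}$ directly.

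Next take $t_W=0$ with no rational linear component.
\begin{itemize}
\item If there are at least $d+1$ rational hyperplanes containing components of $X$, the Homma--Kim bound suffices.
\item Otherwise, if some $P\in X(\Fq)$ lies on no rational hyperplane containing a component of $X$, count only over the hyperplanes through $P$ and remove $P$ from each fibre. All such hyperplanes are good, and this yields $|X(\Fq)|\le d^2q^{m-2}+\pi_{m-3}(q)-(q-d+2)$, a saving of about $q$ rather than less than one point. Your ``project from an $\Fq$-point of $X$'' hint points this way, but the choice of $P$ is what makes it work.
\item If no such $P$ exists, $X(\Fq)$ lies in at most $d$ rational hyperplanes. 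Each is bounded by the $r=2$ Beelen--Datta--Ghorpade result, which needs $t_W=0$.
\end{itemize}

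For a multiplicity-one rational linear component $l$, the paper uses linkage: $Y=V(F_1,F_2,\Delta)$ with $\Delta|_l$ a nonzero form of degree $2d-2$. This gives $\sum_{H\supseteq l}a_H\le 2d-2$, and $t_W=0$ gives $a_H\le d-2$. This is precisely the control over hyperplanes through a linear component that your sketch lacks.

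Finally, linear components of multiplicity $\ge 2$ are handled by a separate induction, which yields $(d^2-1)q^{m-2}+\pi_{m-3}(q)$.
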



We also state a related conjecture about standard graded Artinian algebras, and we will later discuss why this conjecture motivates the previous ones.
We will also show that the Eisenbud-Green-Harris Conjecture (see Section~\ref{sec: prelim}) implies this conjecture.

\begin{conjecture}[Affine Algebraic Conjecture]\label{conj: affine conjecture 2} 
Let $\kk=\fqb$. Let $A= \kk[x_1,\ldots,x_m]/I$ be a standard graded Artinian algebra, where $I$ contains a regular sequence of degree $(\underbrace{d,\ldots,d}_{m-k\text{ times}}, \underbrace{q,\ldots,q}_{k\text{ times}})$. Write $A_{\le d} := \bigoplus_{i=0}^d A_i$ to be the degree $\leq d$ graded part of~$A$. Then
\[
\max\{ \dim_{\kk} A \mid   
\dim_{\kk} A_{\le d}
\le
\tbinom{m+d}{d}-r \}
=\fA.
\]
\end{conjecture}


This paper is organized as follows. We introduce some preliminaries on the Artinian reduction and Eisenbud-Green-Harris Conjecture in Section~\ref{sec: prelim}. 
In Section~\ref{sec: EGH and affine}, we discuss Conjecture \ref{conj: affine conjecture 2} and prove that it is implied by the EGH conjecture. 
In Section~\ref{sec: affine conjecture equality}, we discuss Conjecture~\ref{conj:affine conj 1} and show that the conjectured formula for $\eA$ is at least a lower bound. 
In Section~\ref{sec: projective conjecture}, we discuss Conjecture~\ref{conj:projective-geometric} and show that the conjectured formula for $\eP$ is at least a lower bound. 
In Section \ref{sec: m-2 proj implies B-D-G}, we show that Conjecture~\ref{conj:projective-geometric} at dimension $k=m-2$ implies Conjecture~\ref{Conj: complete GDC}. In Section~\ref{sec: codim 2}, we focus on the case of complete intersection and show Theorem \ref{prop: r=m-k,k=m-2}.

\section{Preliminaries}\label{sec: prelim}
\subsection{Artinian reduction $A_{\Gamma}$}
In this section, we recall the construction of the Artinian reduction $A_{\Gamma}$ from a zero-dimensional scheme $\Gamma \subseteq \PP^m$. The construction was given in detail by Eisenbud, Green and Harris \cite{eisenbud1996cayley}.

Throughout this paper, let $\kk=\fqb$ and let $S(m,\kk):=\kk[X_0, \dots, X_m]$ be the homogeneous coordinate ring of $\PP^m$, and $A(m,\kk):=\kk[x_1, \dots, x_m]$ be the coordinate ring of $\mathbb{A}^m$.

Let $\Gamma \subseteq \PP^m$ be a zero-dimensional scheme.  Let $I(\Gamma) \subseteq S(m,\kk)$ be the homogeneous ideal of $\Gamma$, and let $S_{\Gamma}:=S(m,\kk)/I(\Gamma)$ be the homogeneous coordinate ring of $\Gamma$. 
Let $\PPc^m(\kk)$ be the set of hyperplanes of $\PP^m$.
Since $\kk$ is infinite and $\operatorname{Supp}(\Gamma)$ is finite, there exists a hyperplane $ H \in \PPc^m(\kk)$ disjoint from $\Gamma$.

Now, let $H \in \PPc^m(\kk)$ be a hyperplane such that $H \cap \Gamma=\emptyset$. Let $h \in S_1(m,\kk)$ be the linear form defining this hyperplane. Since $\Gamma$ is zero-dimensional, $S_{\Gamma}$ is a one-dimensional Cohen-Macaulay ring. Since $H\cap\Gamma=\emptyset$, $h$ is not a zero-divisor in $S_{\Gamma}$. The Artinian reduction associated to $\Gamma$ is:
\[
A_{\Gamma}:=S_\Gamma/(h)=S(m,\kk)/(I(\Gamma),h).
\]
By its construction, $A_{\Gamma}$ has Krull dimension $0$, hence it is a local Artinian algebra with the unique maximal ideal $\mathfrak{m}=(X_0, \dots, X_m)$. For a graded algebra $A$, let $A_i$ be its $i$-th graded piece.
We have the following exact sequence on each graded piece:
\[
0 \to {(\sg)}_{i-1} \xrightarrow{\cdot h}{(\sg)}_i \to {(\ag)}_{i} \to 0.
\]
Let $h_{\ag}$ (resp. $h_{\sg}$) be the Hilbert function of the corresponding graded algebra. From the above exact sequence, we have
\[
h_{\ag}(i)=h_{\sg}(i)-h_{\sg}(i-1).
\]
Taking the sum over all graded pieces, we get the following two properties of $\ag$:
\begin{enumerate}
    \item $\dim_\kk(\ag)
    =\lim_{N \to \infty}h_{\sg}(N)=\deg(\Gamma)$,
    \item $\dim_{\kk}({\ag}_{\leq d})
    =h_{\sg}(d)=\binom{m+d}{d}- \dim_{\kk}( I(\Gamma)_d)$.
\end{enumerate}
\subsection{Affine $k$-dimensional conjecture}
We apply the Artinian reduction in our setting.

Let $W \subseteq S_d(m, \Fq)$ be a vector space such that $\dim_{\Fq}(W)=r$ and $\dim(V(W)) \leq k$. Let $I(W)$ be the ideal generated by $W$ in $S(m,\kk)$. Let $W_{\kk}:=W \otimes_{\Fq}\kk$.

Consider the zero-dimensional subscheme $\Gamma =V(W)(\Fq)$. By \cite[6.1]{beelen2022combinatorial}, $I(\Gamma)=(I(W), G_{ij})^{\sat}$ where $G_{ij}=X_iX_j^{q}-X_jX_i^{q}$ for $ 0 \leq i<j\leq m$. Notice that the common vanishing locus of $\{G_{ij}\}_{0\leq i<j \leq m}$ consists of exactly the $\Fq$-rational points in $\PP^m$.

Now, pick a hyperplane $H \in \PPc^m(\kk)$ such that $H\cap\Gamma=\emptyset$ and let $h \in S_1(m,\kk)$ be its defining linear form. From the above construction we get the Artinian reduction $\ag=S(m,\kk)/(h,I(\Gamma))$. It has the following properties:
\begin{itemize}
    \item $\dim_{\kk}(\ag)=|V(W)(\Fq)|$,
    \item $\dim_{\kk}({\ag}_{\leq d}) \leq \binom{m+d}{d}-r$.
\end{itemize}

We have the following lemma saying we can extract a regular sequence from the ideal of $\ag$.
\begin{lemma}\label{lem: regular sequence Krull dimension}
Let $A$ be a graded Cohen-Macaulay $\kk$-algebra of Krull dimension $m$. Let $W \subseteq A$ be a finite dimensional vector subspace over $\kk$ and $I(W)$ be the ideal generated by $W$. Let $B=A/I(W)$. If the Krull dimension of $B$ is at most $m-k$, then we can find a regular sequence $(F_1,F_2, \dots, F_{k})$ in~$W$.
\end{lemma}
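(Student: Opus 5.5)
We prove the statement by induction on $k$, using prime avoidance to pick the elements one at a time. The case $k=0$ is vacuous. Suppose $1\le k$, and that we have already found $F_1,\dots,F_{j}\in W$ forming a regular sequence in $A$, with $j<k$. Put $A_j:=A/(F_1,\dots,F_j)$. Since $A$ is graded Cohen--Macaulay and $F_1,\dots,F_j$ is a regular sequence, $A_j$ is again Cohen--Macaulay, of Krull dimension $m-j$. Its associated primes are therefore exactly its minimal primes, and each $\mathfrak p\in\operatorname{Ass}(A_j)$ satisfies $\dim A_j/\mathfrak p=m-j$. Here we use unmixedness of graded Cohen--Macaulay rings, localizing at the irrelevant ideal if needed.

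We need to find $F_{j+1}\in W$ that is a nonzerodivisor on $A_j$, i.e. avoids every associated prime $\mathfrak p$ of $A_j$. This relies on two facts.

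\emph{No associated prime contains the image of $W$.} Suppose some $\mathfrak p\supseteq \overline{W}$. Then $\mathfrak p$ contains the image of $I(W)$, so $A_j/\mathfrak p$ is a quotient of $B=A/I(W)$. This gives $m-j=\dim A_j/\mathfrak p\le \dim B\le m-k$, hence $j\ge k$, a contradiction.

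\emph{A vector space over an infinite field is not a finite union of proper subspaces.} For each of the finitely many $\mathfrak p\in \operatorname{Ass}(A_j)$, let $W_{\mathfrak p}$ be the set of $F\in W$ whose image in $A_j$ lies in $\mathfrak p$. Each $W_{\mathfrak p}$ is a $\kk$-subspace of $W$, and by the first fact it is proper. Since $\kk=\fqb$ is infinite, the finite-dimensional space $W$ is not the union of the $W_{\mathfrak p}$. Choose $F_{j+1}\in W\setminus\bigcup_{\mathfrak p} W_{\mathfrak p}$. This element is a nonzerodivisor on $A_j$, so $F_1,\dots,F_{j+1}$ is a regular sequence. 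Repeating until $j=k$ produces the required sequence.

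The main technical point is that regular sequences behave well in the graded, non-local setting. If $W$ consists of homogeneous elements, as in our application $W\subseteq S_d$, then every $F_i$ is homogeneous. Regularity in the graded ring then matches regularity after localizing at the irrelevant ideal, and the associated primes are homogeneous, so the dimension comparison above holds. If $W$ is not homogeneous, we interpret the Cohen--Macaulay hypothesis and the dimensions after localizing at the irrelevant maximal ideal and run the same argument there.

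A secondary point is that $\mathfrak p$ is a prime of $A_j$, not of $A$. We handle this by pulling $\mathfrak p$ back to $A$, which is compatible with quotienting by $I(W)$.
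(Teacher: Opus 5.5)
Your proof is correct and follows the paper's argument: you induct on the length, use that $A/(F_1,\dots,F_j)$ is Cohen--Macaulay so its zero-divisors are the union of its associated (equivalently, minimal) primes, rule out any such prime containing $W$ by comparing dimensions with $B$, and then use linear prime avoidance over the infinite field $\kk$ to pick the next element inside $W$. The differences are cosmetic: you get $\dim A_j/\mathfrak p=m-j$ directly from unmixedness rather than via Krull's height theorem, and you phrase prime avoidance as ``a vector space over an infinite field is not a finite union of proper subspaces,'' which is exactly the form needed to keep $F_{j+1}\in W$.
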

\begin{proof}
    We show that we can find a regular sequence $(F_1, \dots, F_{k'})$ for $0 \leq k' \leq k$. We induct on $k'$. The base case $k'=0$ is vacuously true. 

    Now, suppose we have a regular sequence $(F_1, \dots, F_{k'-1})$. Then $A/(F_1, \dots, F_{k'-1})$ is Cohen-Macaulay with Krull dimension $m+1-k' \geq m+1-k$. 

    Let $\{I(\Gamma_i)\}$ be the minimal primes in $A$ containing the ideal $(F_1, \dots, F_{k'-1})$. Since the quotient is Cohen-Macaulay, the minimal primes are exactly the associated primes of the quotient, and an element $F$ in the quotient ring is not a zero-divisor if and only if it avoids all the $I(\Gamma_i)$. By Krull's height theorem, each $I(\Gamma_i)$ has height $k'-1 \leq k-1$. Hence, for each $I(\Gamma_i)$, there exists $H_i \in W$ such that $H_i \not \in I(\Gamma_i)$. Indeed, if every $F\in W$ belongs to $I(\Gamma_i)$, then
$I(W)\subseteq I(\Gamma_i)$. Hence
$\dim(A/I(W)) \geq \dim(A/I(\Gamma_i))=m-k'+1>m-k$,
contradicting the assumption that $\dim(B)\leq m-k$.

Now, for each $I(\Gamma_i)$ we choose $H_i \not \in I(\Gamma_i)$. By the prime-avoidance lemma, we can find some $\kk$-linear combination $F=\sum_i a_iH_i$ such that $F \not \in I(\Gamma_i)$ for any $i$.  Hence, we extend the regular sequence $(F_1, \dots, F_{k'-1})$ to $( F_1, \dots, F_{k'})$, which completes the induction step.
\end{proof}
We want to show that there is an Artinian reduction $A_{\Gamma}$ that contains a regular sequence of degree $(\underbrace{d, \dots, d}_{m-k}, \underbrace{q+1, \dots, q+1}_{k})$ in its defining ideal. Applying Lemma
\ref{lem: regular sequence Krull dimension} to $S(m,\kk)$ and the vector space $W_{\kk}$, we can find a regular sequence $(F_1,\dots,F_{m-k})$ where $F_i\in W_{\kk}$. Applying Lemma \ref{lem: regular sequence Krull dimension} to $S(m,\kk)/(F_1, \dots, F_{m-k})$ and the vector space $\langle G_{ij}\rangle$,  we can extend the regular sequence to
$(F_1,\dots,F_{m-k},G_1,\dots,G_k)$,
where $G_j\in\langle G_{ij}\rangle$. Since $V(F_1,\dots,F_{m-k},G_1,\dots,G_k)$ is zero-dimensional, we can
find a hyperplane $H=V(h)\in\PPc^m(\kk)$ that avoids its support.
The Artinian reduction
$A_{\Gamma}=S(m,\kk)/((I(W),G_{ij})^{\sat},h)$ is then an Artinian algebra
whose defining ideal contains a regular sequence of degree
$(\underbrace{d,\dots,d}_{m-k},
\underbrace{q+1,\dots,q+1}_{k})$.

Now suppose there is a hyperplane $H$ such that $H\cap \Gamma=\emptyset$, $H$ is defined over $\Fq$ and $\dim(H\cap V(W))\leq k-1$.
After an $\Fq$-rational change of variables, we can assume that $H=V(X_0)$.
Let $U(h)$ be the non-vanishing locus of a linear form $h$. Then, the $\Fq$-rational points in $U(X_0)$ are the common vanishing locus of $X_i^q-X_iX_0^{q-1}$ for $1 \leq i \leq m$.
We know that $\Gamma=V(W)(\Fq)\subseteq U(X_0) \cong \A^m$. Then $I(\Gamma)=(I(W),G_1, \dots, G_m)^{\sat}$ where $G_i=X_i^q-X_iX_0^{q-1}$. 
Denote $\overline{I(\Gamma)} = (I(\Gamma)+(X_0))/(X_0)$.
Hence, we can take 
\[
\ag
=\kk[X_0, \dots, X_m]/(X_0, I(\Gamma))
\cong \kk[X_1,\dots, X_m]/ \overline{I(\Gamma)}.
\] 
Since $\dim(H\cap V(W))\leq k-1$, by Lemma \ref{lem: regular sequence Krull dimension}, we can find a regular sequence of length $m-k$ in $(W_{\kk}+(X_0))/X_0 \subseteq (\overline{I(\Gamma)})_d$.
Moreover, $\bG_i=X_i^q$, so $\kk[X_1,\dots,X_m]/(\bG_1,\dots,\bG_m)$ has Krull dimension $0$. 
Thus, applying Lemma \ref{lem: regular sequence Krull dimension} to the quotient by the previous regular sequence and the vector space spanned by the images of $\bG_1,\dots,\bG_m$, we can extend the previous regular sequence to one of degree $(\underbrace{d, \dots, d}_{m-k},\underbrace{q, \dots, q}_k)$ in $\overline{I(\Gamma)}$.
Moreover, we have $\dim_{\kk}({\ag}_{\leq d})\leq \binom{m+d}{d}-r$ and $\dim_{\kk}(\ag)=|V(W)(\Fq)|$. 
Conjecture~\ref{conj: affine conjecture 2} is thus an algebraic version of Conjecture \ref{conj:affine conj 1}.


Even though Conjecture \ref{conj: affine conjecture 2} does not imply Conjecture \ref{conj:affine conj 1} or Conjecture \ref{conj:projective-geometric}, it relates to these two conjectures under some additional conditions.
\begin{enumerate}
    \item Relation to Conjecture \ref{conj:affine conj 1}. Start with $W \subseteq A_{\leq d}(m, \Fq)$ with $\dim_{\Fq}(W)=r$ and $\dim(Z(W)) \leq k$, let $\Gamma=Z(W)(\Fq) \subseteq \A^m(\Fq)$. Let $U(X_0)$ be the non-vanishing locus of $X_0$. We may identify $\A^m$ with $U(X_0) \subseteq \PP^m$ and realize $\Gamma$ as a zero-dimensional scheme in $\PP^m$. Then $H=V(X_0)$ does not intersect $\Gamma$.  Constructing the Artinian reduction $\ag$ as above, we have $\dim_\kk({\ag}_{\leq d}) \leq \binom{m+d}{d}-r$. If, in addition, the ideal of $\ag$ contains a regular sequence of degree $(\underbrace{d, \dots, d}_{m-k},\underbrace{q, \dots, q}_k)$, then Conjecture \ref{conj: affine conjecture 2} implies that $|Z(W)(\Fq)| \leq \fA$, which is the conclusion of Conjecture \ref{conj:affine conj 1}. However, unlike the projective case, $\dim(Z(W)) \leq k$ does not imply the existence of a regular sequence of the required degree. 
    \item Relation to Conjecture \ref{conj:projective-geometric}. Start with $W \subseteq S_{d}(m, \Fq)$ with $\dim_{\Fq}(W)=r$ and $\dim(V(W)) \leq k$.
    \begin{itemize}
    \item When $k=0$, then Conjecture \ref{conj: affine conjecture 2} implies Conjecture \ref{conj:projective-geometric}. Indeed, take $\Gamma=V(W)$, then $\Gamma$ is already a zero-dimensional scheme. The ideal of the Artinian reduction $\ag$ contains a regular sequence of degree $(\underbrace{d, \dots, d}_{m})$, and the conclusion of Conjecture \ref{conj: affine conjecture 2} implies that \[
    |V(W)(\Fq)| \leq \deg(V(W)) \leq f^{\A}_r(d,m,0;q)=f^{\PP}_r(d,m,0;q).
    \]
    \item For $1 \leq k \leq m-1$, take $\Gamma=V(W)(\Fq)$. Let $s=\min(m-l-1,k-1)$ where $l$ is the smallest index that $\al_l \neq 0$ for $w'_{r-(m-k)}(d,m,k;q)=(\al_1, \dots, \al_m)$. If there exists a $\Fq$-rational hyperplane $H=V(h)$ such that $|H \cap \Gamma| \leq \pi_s(q)$, then both Conjecture~\ref{conj: affine conjecture 2} and Conjecture~\ref{conj:affine conj 1} independently imply 
    \[|V(W)(\Fq)| \leq  f^{\PP}_r(d,m,k;q) =f^{\A}_r(d,m,k;q)+\pi_{s}(q).\]   
    
    First, assume Conjecture~\ref{conj: affine conjecture 2}.
    Let $\Gamma'=\Gamma \cap U(h)$. By linear change of variables, we can assume that $h=X_0$.  By Lemma~\ref{lem: regular sequence Krull dimension}, there is a 
    regular sequence $( F_1,\dots, F_{m-k}) \subseteq W_{\kk}$.
    Let $A_1=S(m,\kk)/\langle F_1,\dots,F_{m-k}\rangle$. Then $A_1$ is Cohen-Macaulay of Krull dimension $k+1$. 
    Let $W_1$ be the vector subspace of $A_1$ generated by the images of $X_i^q -X_i X_0^{q-1}$ for $1\leq i\leq m$. By Lemma~\ref{lem: regular sequence Krull dimension} there is a regular sequence $( G_1,\dots, G_k)$ in $W_1$. Thus, $( F_1, \dots, F_{m-k}, G_1, \dots, G_k)$ is a regular sequence in $I(\Gamma')\subseteq \kk[X_0,\dots, X_m]$ of degree $(\underbrace{d, \dots, d}_{m-k}, \underbrace{q, \dots, q}_{k})$.
    Pick a hyperplane $V(h')$ that does not intersect the zero-dimensional scheme $V(F_1, \dots, F_{m-k}, G_1, \dots, G_k)$. This means that $( F_1, \dots, F_{m-k},\\ G_1, \dots, G_k, h')$ is a regular sequence in $\langle I(\Gamma'),h'\rangle$. Let $A_{\Gamma'}=S(m,\kk)/(I(\Gamma'),h')$. By Conjecture~\ref{conj: affine conjecture 2},
    \[
    \deg(\Gamma') =\dim(A_{\Gamma'})
 \leq f^{\A}_r(d,m,k;q).
    \]
    Hence
    \[|V(W)(\Fq)| \leq  f^{\A}_r(d,m,k;q)+\pi_{s}(q).\]

    Next assume Conjecture \ref{conj:affine conj 1}. Let
$F_1,\dots,F_r$ be an $\Fq$-basis of $W$.
    If we let the dehomogenization of $(F_1, \dots, F_r)$ with respect to $h$ be $(f_1, \dots, f_r)$, then 
    \[Z(f_1, \dots, f_r)=V(F_1, \dots, F_r) \cap U(h).\] 
    By Conjecture \ref{conj:affine conj 1}, we have $|Z(f_1, \dots, f_r)(\Fq)| \leq \fA$, so 
    \[
    |V(F_1, \dots, F_r)(\Fq)| \leq \fA+\pi_s(q)=\fP.
    \] 
    \end{itemize}
\end{enumerate}

\subsection{Lex-plus-powers monomial ideals and the EGH conjecture}\label{Sec: EGH}

In this section, we let $S:=S(m-1,\kk)=\kk[x_1,\dots,x_m]$ be the standard graded polynomial ring in $m$ variables. Equip $S$ with the lexicographic order on monomials induced by $x_1>x_2>\cdots>x_m$.
\begin{definition}[Monomial ideal, {\cite[Definition 2.1.4]{coxLittleOSheaIVA}}]
An ideal \(I\subseteq S\) is called a
\textbf{monomial ideal} if it is generated by monomials. That is, there
exists a subset \(A\subseteq \mathbb N^m\) such that
\[
I=\langle x^\alpha:\alpha\in A\rangle,
\]
where $x^\alpha=x_1^{\alpha_1}\cdots x_m^{\alpha_m}$.
\end{definition}
From the definition, we see that for a monomial ideal $I$, if a polynomial $F \in I$, then each of the monomials appearing in $F$ must also belong to $I$. Equivalently, if $B=S/I$ is the quotient of $S$ by a monomial ideal $I$, then the non-zero monomials in $B$ form a $\kk$-basis of $B$.

\begin{definition}[Lex-ideal, {\cite[Definition 2.1]{gunturkun2021survey}}]
    A monomial ideal $J \subseteq S$ is called a \textbf{lex ideal} if, for every monomial $M \in J$, for every other monomial $N$ such that $\deg(N)=\deg(M)$ and $N>_{\lex}M$, we have $N \in J$. 
\end{definition}
\begin{definition}[Lex-plus-powers ideal, {\cite[Definition 2.1]{gunturkun2021survey}}]\label{def: lex-plus-power ideal}
A monomial ideal \(L\subseteq S\) is called a \textbf{lex-plus-powers ideal} of degree
\(\mathbf a=(a_1\leq\dots\leq a_m)\) if
\[
L=(x_1^{a_1},\dots,x_m^{a_m})+J,
\]
where \(J\subseteq S\) is a lex ideal.
\end{definition}
Now we summarize some properties of the quotient of $S$ by a lex-plus-powers ideal $L$. These are standard consequences of the definitions.
\begin{proposition}[Properties of lex-plus-powers model]\label{prop: properties of lex-plus-power}
Let $L$ be a lex-plus-powers ideal of degree $\mathbf a=(a_1,\dots,a_m)$. Let $B=S/L$. Let $B_i$ be the subspace of degree $i$ polynomials in $B$.
\begin{enumerate}
    \item Let $M$ be a non-zero monomial in $B$. If $M'$ is a monomial such that $M' \mid M$, then $M'$ is non-zero in $B$.
    \item Let $h=\dim_{\kk}B_i$. Then, $B_i$ has a basis consisting of the $h$ lex-smallest monomials in $\B(\mathbf{a})_i$, where 
    \[
    \B(\mathbf{a})_i:=\{x_1^{\al_1} \dots x_{m}^{\al_m} \mid 0 \leq\al_j \leq a_j-1, \sum_{j=1}^m\al_j=i\}.
    \]
\end{enumerate}
    
\end{proposition}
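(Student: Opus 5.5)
Both parts of \cref{prop: properties of lex-plus-power} should follow directly from the definitions, using one structural observation about $L$. Write $L=(x_1^{a_1},\dots,x_m^{a_m})+J$ with $J$ a lex ideal. Since $L$ is a monomial ideal, a monomial $M$ is nonzero in $B=S/L$ exactly when $M\notin L$. Moreover, $M\in L$ if and only if $M$ is divisible by some $x_j^{a_j}$ or $M\in J$. This holds because a monomial lies in a monomial ideal exactly when it is divisible by one of the generators, and the generators of $L$ are the powers together with the generators of $J$.

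For part (1), suppose $M'\mid M$ and $M'=0$ in $B$, i.e.\ $M'\in L$. Since $L$ is an ideal, $M=M'\cdot (M/M')\in L$, so $M=0$ in $B$. This contradicts $M\neq 0$, and the contrapositive is exactly (1).

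For part (2), first note that the nonzero monomials of $B$ in degree $i$ form a basis of $B_i$, by the remark after the definition of monomial ideals. Each such monomial avoids every $x_j^{a_j}$, so it lies in $\B(\mathbf a)_i$. Hence the basis of $B_i$ is $\B(\mathbf a)_i\setminus J_i$, where $J_i$ denotes the set of monomials of $J$ in degree $i$.

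The key step is to show that this set is an initial segment of $\B(\mathbf a)_i$ from below in lex order. Take $N\in\B(\mathbf a)_i$ with $N\notin J$, and take $N'\in \B(\mathbf a)_i$ with $N'<_{\lex}N$. Suppose for contradiction that $N'\in J$. The lex-ideal property then forces every degree-$i$ monomial lex-larger than $N'$ into $J$, in particular $N\in J$, a contradiction. So $N'\notin J$, and since $N'\in\B(\mathbf a)_i$ it is also not divisible by any $x_j^{a_j}$. Thus $N'\notin L$.

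It follows that the complement $\B(\mathbf a)_i\setminus J$ is closed downward in lex order within $\B(\mathbf a)_i$. Since this set has $h=\dim_\kk B_i$ elements, it consists precisely of the $h$ lex-smallest monomials of $\B(\mathbf a)_i$.

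There is no genuine obstacle here. The only point requiring care is in part (2): the lex-ideal condition refers to all monomials of degree $i$, whereas we compare only within $\B(\mathbf a)_i$. This works because the downward-closure argument uses only the upward-closure of $J$ in lex order, which restricts cleanly to the subset $\B(\mathbf a)_i$.
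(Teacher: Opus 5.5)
Your proof is correct and follows the paper's argument. Part (1) is the same, via $M=M'\cdot(M/M')\in L$. Part (2) identifies the surviving degree-$i$ monomials with $\B(\mathbf a)_i\setminus J$ and uses the lex-ideal property of $J$ to show this set is an initial lex segment of $\B(\mathbf a)_i$. Your explicit observation that a monomial in the box lies in $L$ only if it lies in $J$ fills in a step the paper leaves implicit.
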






\begin{proof}
\begin{enumerate}
    \item Since \(M'\mid M\), we can write $M=M' N$ . If \(M'\in L\), then we have $M \in L$, 
which contradicts the assumption that \(M\) is nonzero in \(B=S/L\).
\item 
Since \(L\) is a monomial ideal, \(B_i\) has basis given by the degree \(i\)
monomials not contained in \(L\). Since
$(x_1^{a_1},\dots,x_m^{a_m})\subseteq L$,
every surviving monomial lies in the box
\[
\B(\mathbf{a}):=\{x_1^{\al_1} \dots x_{m}^{\al_m} \mid 0 \leq\al_j \leq a_j-1\}.\]

By Definition \ref{def: lex-plus-power ideal}, if \(x^\alpha\in L\) and
\(x^\beta>_{\lex}x^\alpha\), with \(\alpha,\beta\in \B(\mathbf a)_i\), then
\(x^\beta\in L\), hence zero in $B$. Therefore the non-zero monomials in $B_i$ are the lex-smallest monomials. Since the number of non-zero monomials in $B_i$ is $h$, they are exactly the \(h\) lex-smallest monomials in \(\B(\mathbf a)_i\).
\end{enumerate}
\end{proof}
As a consequence, for a lex-plus-powers quotient $B=S/L$, given $h=\dim_{\kk}B_i$, one
can obtain lower bounds for the Hilbert function in smaller degrees. Namely,
for \(j<i\), every degree \(j\) divisor of a standard monomial in \(B_i\) is
again nonzero in \(B_j\). Since distinct surviving monomials are linearly
independent modulo a monomial ideal, these divisors give a linearly independent
subset of \(B_j\). Therefore
\[
h_B(j)
\ge
\#\left\{
\text{degree } j \text{ monomial divisors of the lex smallest $h$ monomials in } \B(\mathbf a)_i
\right\}.
\]

We now recall the Eisenbud-Green-Harris conjecture in the form needed below.

\begin{conjecture}[Eisenbud-Green-Harris]\cite[Conjecture 3.4]{gunturkun2021survey}\label{conj:EGH} \\
Suppose $I \subseteq S$ is a homogeneous ideal containing a regular sequence $(f_1, \dots, f_m)$ with $\deg(f_i)=a_i$ and $2 \leq a_1\leq \dots \leq a_m$. 
Then there exists a lex-plus-powers ideal $
L=(x_1^{a_1},\dots,x_m^{a_m})+J$
such that
\[
h_{S/I}(t)=h_{S/L}(t)
\qquad\text{for all }t\ge 0.
\]
\end{conjecture}
When such an \(L\) exists, we call \(S/L\) the \textbf{lex-plus-powers model} of $A=S/I$.

In this paper, the degree sequence of interest is
$\mathbf a=(\underbrace{d,\dots,d}_{m-k\text{ times}},
\underbrace{q,\dots,q}_{k\text{ times}})$.
We write this sequence as $(d^{m-k},q^k)$. The corresponding box of monomials is:
\begin{equation*}
\begin{split}
\B
=\B(d,m,k;q)
:=\{(\alpha_1,\dots,\al_m) \in \mathbb{N}^{m} \mid\;  & 0 \leq \al_i \leq d-1 \text{ for } 1 \leq i \leq m-k,\\
    & 0 \leq \alpha_j \leq q-1 \text{ for } m-k+1 \leq j \leq m\}.
    \end{split}
\end{equation*}
For an element $w=(\al_1, \dots, \al_m) \in \B$, let $\deg(w)=\sum_{j=1}^{m}\al_j$.
\begin{definition}[Ordering]
We define the following orderings in $\B$.
For $w, w' \in \B$, we say that:
\begin{enumerate}
    \item $w$ is \textbf{lexicographically smaller} than $w'$ (written as $w< _{\lex}w'$) if in the first index $p$ where $w_p \neq w'_p$, we have $w_p < w'_p$.
    \item $w$ is \textbf{coordinately smaller} than $w'$ (written as $w\prec w'$) if $w_i \leq w'_i$ for $1 \leq i \leq m$, and $w \neq w'$. 
\end{enumerate}
\end{definition}

Notice that $<_{\lex}$ is a total ordering on $\B$, whereas $\prec$ is a partial ordering.

\begin{definition}
Let $A$ be a standard graded Artinian algebra that is a quotient of $S=\kk[x_1, \dots, x_m]$, whose ideal contains a regular sequence of degree $(d^{m-k}, q^k)$.
We say that $\text{EGH}(d^{m-k}, q^k)_m$ holds for $A$, if $A$ has a lex-plus-powers model of degree $(d^{m-k}, q^k)$. That is, if there exists a lex-plus-powers ideal $L$ of degree $(d^{m-k}, q^k)$ where $h_{S/L}(i)=h_A(i)$ for all $i \geq 0$. 
\end{definition}

\begin{remark}
    By \cite[Conjecture 3.4]{gunturkun2021survey}, it is expected that if $h_A(1)=m$ and the ideal $I$ of $A$ contains a regular sequence of degrees $\dqmk$, then $A$ satisfies $\EGH(d^{m-k},q^k)_m$. The $\EGH$ conjecture as stated in \cite[Conjecture 3.4]{gunturkun2021survey} is known to be true in the following cases:
    \begin{itemize}
        \item $m=2$.
        \item $m=3$ and the degree of the regular sequence is $(2,d,d)$ or $(3,d,d)$.
        \item $m=3$ and $A$ is Gorenstein.
    \end{itemize}
\end{remark}

If $A$ satisfies $\EGH(d^{m-k},q^k)_m$, let $S/L$ be a lex-plus-powers
model of $A$. In particular we know that the monomials in $S/L$ form a $\kk$ basis. We define
$$
\Lambda(A):=
\{(\al_1,\dots,\al_m)\in\B(d,m,k;q)
\mid x_1^{\al_1}\cdots x_m^{\al_m}\notin L\}.
$$


\begin{definition}[closed and compressed]
Let $\Lambda \subseteq \B$. Write $\Lambda(i)$ (resp. $\B(i)$) for the elements in $\Lambda$ (resp. $\B$) of degree $i$.
\begin{enumerate}
    \item  We say that $\Lambda$ is \textbf{closed} if $\forall w \in \Lambda$, $\forall w' \preccurlyeq w$, we have $w' \in \Lambda$.
    \item We say that $\Lambda$ is \textbf{compressed} if for each degree $i$, $\Lambda(i)$ consists of the lex-minimal $|\Lambda(i)|$ elements in $\B(i)$. That is,  $ \forall w \in \Lambda(i)$,  $ \forall w' \in \B(i)$ and $w'\leq_{\lex}w$, we have $w' \in \Lambda$.
\end{enumerate} 
\end{definition}

By Proposition \ref{prop: properties of lex-plus-power}, we see that if $A$ satisfies $\EGH(d^{m-k},q^k)_m$, then $\Lambda(A)$ is closed and compressed.  

\subsection{The conjectured formula $\fA$}\label{sec: conjecture formula fA}
In this section, we give a natural interpretation for the formula $\fA$ defined in \eqref{eq: formula of frA} and show that it is a non-decreasing function with respect to $k$ when other parameters are fixed. 
We fix the following parameters:
    \begin{enumerate}
        \item $m,d \geq 1$, and the prime power $q \geq d+1$.
        \item $1 \leq r \leq \binom{m+d}{d}$.
    \end{enumerate}
Let $0 \leq k \leq m-1$.
For the sake of simplicity, we drop the index $r,m,d,q$ from the notations and write them as a function of $k$. 
\begin{notation}\label{notation: function of k}
We will use the following notations in this subsection:
\begin{enumerate}
    \item $s(k):=r-m+k$,
    \item $\Omega'(k):=\Omega'\dmkq$,
    \item $\B(k):=\B\dmkq$,
    \item $\CC(k):=\CC(d,m,k)$,
    \item $\fa(k)=\fA$,
    \item If $k \geq m-r$: $w'_{s(k)}=w'_{r-(m-k)}\dmkq$.
\end{enumerate}
\end{notation}

For $k$ such that $s(k)\geq 1$, we define the subset
$\Lambda_{s(k)}\subseteq \B(k)$ by
\begin{equation}\label{eq:lambdas}
\begin{split}
\Lambda_{s(k)}
=
\{w\in\B(k)\mid w<_{\lex}w'_{s(k)}\}.
\end{split}
\end{equation}
We adopt the convention that if $s(k) \leq 0$, equivalently when $k \leq m-r$, then we have $\Lambda_{s(k)}=\B(k)$.

Notice that $\fA$ in \eqref{eq: formula of frA} has the following natural interpretation
\begin{equation}
\fa(k)
= |\Lambda_{s(k)}|
=|\{w \in \B(k) \mid w <_{\lex}w'_{s(k)}\}|.
\end{equation}
Following the above convention, when $s(k)=r-(m-k)\leq 0$, we have 
$\fa(k)=|\B(k)|=d^{m-k}q^k$. Whenever $w'_{s(k)}$ is used below, we implicitly assume that
$s(k)\geq 0$, equivalently $k\geq m-r$.
Given the parameters $m,k$ and $r< \binom{m+d}{d}$, we are interested in the smallest non-zero index of $w'_{s(k)}$.
We make the following definition to characterize the range of $r$, for which this index is $l$.

\begin{definition}\label{def: Fmdk}
For $0 \leq l \leq m$, we define the expression $F_{k}(l)$ such that:
\begin{equation*}
F_{k}(l) =
   \begin{cases}
    -1 & \text{ if $l=0$}\\
   \binom{m+d}{d}-\binom{m+d-l}{d}-\min(m-k,l) & \text{ for $l \geq 1$}
\end{cases} 
\end{equation*}
\end{definition}
\begin{lemma}\label{Lem: range for l}
    Let $l$ be the unique number such that $F_{k}(l-1)<s(k) \leq F_{k}(l)$. Write \\
    $w'_{s(k)}=(\al_1,\dots, \al_m)$, then $l$ is the smallest index such that $\al_l \neq 0$.
\end{lemma}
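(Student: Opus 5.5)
The plan is to identify each interval $(F_k(l-1),F_k(l)]$ with a block of consecutive positions in the lex-descending enumeration of $\Omega'(d,m,k;q)$. The key observation is that, in lex-descending order, elements of $\Omega'(d,m,k;q)$ come grouped by their smallest nonzero index. All elements with $\al_1\neq 0$ precede all elements with $\al_1=0,\al_2\neq 0$, which precede those with $\al_1=\al_2=0,\al_3\neq 0$, and so on. Hence, for each $1\le l\le m$, the first $N(l)$ elements of $\Omega'(d,m,k;q)$ are exactly those whose smallest nonzero index is $\le l$, where
\[
N(l):=\#\{w\in\Omega'(d,m,k;q) : (\al_1,\dots,\al_l)\neq 0\}.
\]
It therefore suffices to show $N(l)=F_k(l)$ for $1\le l\le m$, and to handle the boundary cases $l=1$ and $s(k)=0$ by hand.

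To compute $N(l)$, I would first describe $\Omega'(d,m,k;q)$ explicitly. It consists of the vectors $(\al_1,\dots,\al_m)\in\mathbb N^m$ with $\sum\al_i\le d$, with $\al_i\le d-1$ for $i\le m-k$, and with $\al_j\le q-1$ for $j>m-k$. Since $q\ge d+1$, the constraint $\al_j\le q-1$ is automatic. The constraint $\al_i\le d-1$ removes exactly the vectors $d e_i$ for $i\le m-k$, which is the identity $\Omega'=\Omega(d,m)\setminus\CC(d,m,k)$ already noted in the paper (identifying $\Omega(d,m)$ with its first $m$ coordinates).

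Within $\Omega(d,m)$, the number of vectors with $\al_1=\dots=\al_l=0$ is the number of ways to distribute $d$ among the remaining $m+1-l$ coordinates, namely $\binom{m+d-l}{d}$. So exactly $\binom{m+d}{d}-\binom{m+d-l}{d}$ vectors of $\Omega(d,m)$ have $(\al_1,\dots,\al_l)\neq 0$. Among the removed vectors $d e_1,\dots,d e_{m-k}$, those with a nonzero entry among the first $l$ coordinates are $d e_i$ with $i\le\min(m-k,l)$. Subtracting these gives
\[
N(l)=\tbinom{m+d}{d}-\tbinom{m+d-l}{d}-\min(m-k,l)=F_k(l).
\]

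It remains to conclude and check the boundary cases. If $F_k(l-1)<s(k)\le F_k(l)$, then $w'_{s(k)}$ lies among the first $F_k(l)$ elements but not among the first $F_k(l-1)$. Hence its smallest nonzero index is $\le l$ and not $\le l-1$, so it equals $l$.

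For $l=1$, the convention $F_k(0)=-1$ must accommodate $s(k)=0$. Here $w'_0=(d,0,\dots,0)$ by convention, whose smallest nonzero index is $1$. Also $F_k(1)=\binom{m+d-1}{d-1}-1\ge 0$, so $s(k)=0$ does fall in the $l=1$ interval.

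The main point needing care is the bookkeeping of ranges. The zero vector (position $\binom{m+d}{d}-(m-k)$) has no nonzero index and corresponds to the convention $l=m+1$, so the lemma is understood for $s(k)\le F_k(m)$. Uniqueness of $l$ follows because $F_k$ is non-decreasing, being a cumulative count, and any empty interval simply contains no $s(k)$.
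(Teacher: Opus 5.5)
Your proof is correct and is essentially the paper's argument. The paper counts the elements of $\Omega(d,m)$ that are lexicographically at least $e_l$ (exactly those with $(\al_1,\dots,\al_l)\neq 0$), subtracts the $\min(l,m-k)$ of them lying in $\CC(d,m,k)$, and concludes $w'_{F_k(l)}=e_l$. You count the block of elements with smallest nonzero index $\le l$ directly, which is slightly more robust: it does not need $e_l\in\Omega'(d,m,k;q)$, and that membership fails when $d=1$ and $l\le m-k$. In that case those intervals are empty, so the lemma itself is unaffected.
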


\begin{proof}
Let $e_l=(\underbrace{0,\ldots,0,1}_{l},0,\ldots,0)$.
There are
$\binom{m+d}{d}-\binom{m+d-l}{d}$ elements of $\Omega(d,m)$ that are lexicographically at least $e_l$.
Among these, exactly $\min(l,m-k)$ belong to $\CC(k)$. Hence
$w'_{F_k(l)}=e_l$. Therefore, if $
F_k(l-1)<s(k)\leq F_k(l)
$, then the first $l-1$ coordinates of $w'_{s(k)}$ are zero and its $l$-th coordinate is non-zero.
Thus $l$ is the smallest non-zero index of $w'_{s(k)}$.
\end{proof}

\begin{lemma}\label{lem: r in terms of al}
If $w'_{s(k)}=(\al_1, \dots, \al_m)$ with $l$ being the smallest index such that $\al_l \neq 0$. Then
\[
r= 1+\sum_{i=1}^m \tbinom{m-i+d-\sum_{j=1}^i \al_j}{m-i+1}+\max(m-k-l,0).
\]
\end{lemma}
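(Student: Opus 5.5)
The plan is to count, inside $\Omega(d,m)$, how many elements lie lexicographically above $w'_{s(k)}$, and then correct for the elements of $\CC(k)$. Throughout, identify an $m$-tuple $(\al_1,\dots,\al_m)$ with $\sum\al_j\le d$ with the element $(\al_1,\dots,\al_m,d-\sum_j\al_j)\in\Omega(d,m)$. This identification preserves the lexicographic order, so $\Omega'(k)=\Omega(d,m)\setminus\CC(k)$ sits inside $\Omega(d,m)$ as an ordered subset. Write $\sigma_i=\sum_{j=1}^i\al_j$, with $\sigma_0=0$.

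\emph{Step 1: lex-rank in $\Omega(d,m)$.} An element $\beta$ of $\Omega(d,m)$ is lexicographically larger than $w=w'_{s(k)}$ exactly when, for a unique $i$, it agrees with $w$ in coordinates $1,\dots,i-1$ and $\beta_i>\al_i$. Fix $i$ and $b=\beta_i\in\{\al_i+1,\dots,d-\sigma_{i-1}\}$. The remaining coordinates $\beta_{i+1},\dots,\beta_{m+1}$ are arbitrary nonnegative integers summing to $d-\sigma_{i-1}-b$. Hence the number of such $\beta$ is $\binom{m-i+d-\sigma_{i-1}-b}{m-i}$. Summing over $b$ with the hockey-stick identity gives $\binom{m-i+d-\sigma_i}{m-i+1}$. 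Therefore $w$ is the $N$-th largest element of $\Omega(d,m)$, where
\[
N=1+\sum_{i=1}^m\tbinom{m-i+d-\sigma_i}{m-i+1}.
\]

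\emph{Step 2: remove $\CC(k)$.} The elements of $\CC(k)$ are $d e_j$ for $1\le j\le m-k$. Let $l$ be the first nonzero index of $w$.
\begin{itemize}
\item If $j<l$, then $de_j>_{\lex}w$.
\item If $j>l$, then $de_j<_{\lex}w$.
\item If $j=l$: since $w\in\Omega'(k)$, we cannot have $w=de_l$ with $l\le m-k$. Because $\sum\al_j\le d$, this forces $\al_l<d$, so again $de_l>_{\lex}w$.
\end{itemize}
Thus exactly $\min(l,m-k)$ elements of $\CC(k)$ lie above $w$. This is the same count used in the proof of Lemma~\ref{Lem: range for l}. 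Hence $w$ is the $(N-\min(l,m-k))$-th largest element of $\Omega'(k)$, so $s(k)=r-(m-k)=N-\min(l,m-k)$. This gives
\[
r=N+(m-k)-\min(l,m-k)=N+\max(m-k-l,0),
\]
which is the claimed formula.

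\emph{Step 3: boundary conventions.} The main thing to watch is that these conventions are consistent with the formula; the computation itself is routine.
\begin{itemize}
\item For $r=m-k$, take $w'_0=(d,0,\dots,0)$ and $l=1$. Every binomial term is $\binom{m-i}{m-i+1}=0$, so the formula gives $1+(m-k-1)=m-k$, as required.
\item For $r=\binom{m+d}{d}$, we have $w=0$ and $l=m+1$. Here $N=\binom{m+d}{d}$ (the hockey stick again, since $w$ is the lex-smallest element) and the max term vanishes.
\end{itemize}
The $j=l$ case in Step 2 relies on the membership $w\in\Omega'(k)$; at $r=m-k$ that membership fails, which is why that case is handled directly here.
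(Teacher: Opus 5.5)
Your proof is correct and follows the same route as the paper. You compute the lex-rank of $w'_{s(k)}$ in $\Omega(d,m)$, subtract the $\min(l,m-k)$ elements of $\CC(k)$ lying above it, substitute $s(k)=r-(m-k)$, and check the case $s(k)=0$ directly. The differences are small: the paper cites \cite[Lemma 2.8]{singhal2025conjecture} for the rank in $\Omega(d,m)$, which you instead derive with the hockey-stick identity, and you also justify the $j=l$ comparison with $\CC(k)$ that the paper leaves as ``by definition''.
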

\begin{proof}
If $s(k)=0$, then $w'_0=(d, 0, \dots, 0)$, so the result follows immediately.

If $s(k) \geq 1$, we have:
\begin{equation*}
\begin{split}
    s(k)
    &=1+|\{w \in \Omega'(k)
    \mid w>_{\lex}w_{s(k)}'\}| \\
    &=1+|\{w \in \Omega(d,m) 
    \mid w>_{\lex}w_{s(k)}'\}|
    -|\{w \in \CC(k) 
    \mid w>_{\lex}w_{s(k)}'\}|.
\end{split}
\end{equation*}
By \cite[Lemma 2.8]{singhal2025conjecture}, we have 
\[
1+|\{w \in \Omega(d,m) \mid w>_{\lex}w_{s(k)}'\}|
=1+\sum_{i=1}^m \tbinom{m-i+d-\sum_{j=1}^i \al_j}{m-i+1}.
\]
By definition, we have 
\[
|\{w \in \CC(k) 
\mid w>_{\lex}w_{s(k)}'\}|
=\min\{l,m-k\}.
\] 
By plugging in $s(k)=r-(m-k)$, we get the desired expression.
\end{proof}

\begin{proposition}\label{prop: fA increase}
With parameters $d,m,r,q$ fixed, let $\max\{0,m-r\} \leq k\leq m-2$. 
Let $l$ be the first non-zero index of $w'_{s(k)}$.
\begin{enumerate}
    \item If $k \geq m-l$, then $\fa(k)=\fa(k+1)=H_r(d,m;q)$.
    \item If $k \leq m-l-1$, then $\fa(k+1)\geq\fa(k)$.
\end{enumerate}
In particular, $\fa$ is a non-decreasing function with respect to $k$.
\end{proposition}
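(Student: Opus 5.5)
The plan is to compare the two index sets directly. Going from $k$ to $k+1$ changes three things. First, $\CC(k+1)=\CC(k)\setminus\{c\}$, where $c$ is the vector with $d$ in position $m-k$ and zeros elsewhere, so $\Omega'(k+1)=\Omega'(k)\cup\{c\}$. Second, $s(k+1)=s(k)+1$. Third, $\B(k)\subseteq\B(k+1)$, where the only change is that coordinate $m-k$ may now go up to $q-1$ instead of $d-1$. Write $w=w'_{s(k)}=(\al_1,\dots,\al_m)$ with first nonzero index $l$. Everything hinges on where $c$ sits relative to $w$ in the lexicographic order. I will use the interpretation $\fa(k)=|\{u\in\B(k)\mid u<_{\lex}w'_{s(k)}\}|$ together with the explicit formula \eqref{eq: formula of frA}.

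\emph{Case (1), $k\ge m-l$, i.e. $l\ge m-k$.} If $l>m-k$, then $c>_{\lex}w$. If $l=m-k$, then $\al_l\le d$, and $\al_l=d$ would force $w=c\in\CC(k)$, which is impossible; hence $\al_l\le d-1$ and again $c>_{\lex}w$. In either case, inserting $c$ above $w$ raises the rank of $w$ by exactly one, so $w'_{s(k+1)}(k+1)=w$. Now count $u<_{\lex}w$. Such $u$ vanish in coordinates $1,\dots,l-1$, and they satisfy $u_{m-k}\le \al_{m-k}\le d-1$ when $m-k=l$, and $u_{m-k}=0$ when $m-k<l$. So the box constraint that differs between $\B(k)$ and $\B(k+1)$ is never active, and $\fa(k)=\fa(k+1)$. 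From the formula, since $\al_j=0$ for $j<l$ and $l\ge m-k$, we get $\fa(k)=\sum_{j\ge l}\al_jq^{m-j}$. Iterating the equality up to $k=m-1$, and using that $f^{\A}_r(d,m,m-1;q)=H_r(d,m;q)$ (as noted in the introduction), gives $\fa(k)=H_r(d,m;q)$. The value of $l$ is unchanged along the way because $w$ itself is unchanged.

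\emph{Case (2), $l\le m-k-1$.} Now $c<_{\lex}w$, so the rank of $w$ in $\Omega'(k+1)$ is still $s(k)$. Therefore $v:=w'_{s(k+1)}(k+1)$ is the immediate lex-predecessor of $w$ in $\Omega(d,m)\setminus\CC(k+1)$. I will describe $v$ explicitly. If $\al_m>0$, then $v=w-e_m$. Otherwise, let $p<m$ be the last nonzero index of $w$; then $v$ is obtained by lowering $\al_p$ by $1$ and placing all the freed degree $d-\sum_{j<p}\al_j-\al_p+1$ in position $p+1$. If this produces an element of $\CC(k+1)$, I skip it and take the next predecessor, which has the same shape. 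I then plug $v$ into the formula for $\fa(k+1)$ and $w$ into the formula for $\fa(k)$ and compare term by term. For $j<m-k-1$, the weight of $\al_j$ changes from $d^{m-k-j}q^k$ to $d^{m-k-1-j}q^{k+1}$, which is no smaller since $q\ge d+1$. The single unit lost at position $p$ is paid for by the mass placed at position $p+1$ (or by the weight gain at earlier positions). The bound on that mass, combined with $q\ge d+1$, should give $\fa(k+1)-\fa(k)\ge 0$.

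The main obstacle is this last comparison in Case (2). It splits according to whether $p$ lies below $m-k-1$, equals $m-k-1$, or is at least $m-k$, and also according to whether a skip over $\CC(k+1)$ occurs. In each subcase I would aim for the inequality $(q-d)\cdot(\text{positive quantity})\ge 0$. As a fallback, I would use an injection instead: send $\{u\in\B(k)\mid v\le_{\lex}u<_{\lex}w\}$ into the elements of $\B(k+1)\setminus\B(k)$ lying below $v$, that is, those with $u_{m-k}\ge d$. The final monotonicity statement then follows by combining (1) and (2).
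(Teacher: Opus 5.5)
Your outline follows the paper's proof step for step. It splits on whether $c=d\,e_{m-k}$ lies above or below $w$, concludes $w'_{s(k+1)}=w$ in Case (1), and in Case (2) identifies $v=w'_{s(k+1)}$ as the next element below $w$. This includes the skip to $(d-1)e_{l+1}+e_{l+2}$ when $w=e_l$ with $l\le m-k-2$, and is followed by a comparison of the two formulas in subcases according to the last nonzero index $p$. Case (1) is complete, and your remark that the cap on coordinate $m-k$ is never active below $w$ is a clean substitute for evaluating the formula. The gap is Case (2). There the inequality $\fa(k+1)\ge\fa(k)$, which is the entire content, is left at ``should give''. Moreover, the primary mechanism you name for it (the mass placed at $p+1$ pays for the unit lost at $p$) never suffices.

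To see what is needed, measure both $w$ and $v$ with the place values of $\B(k+1)$: $\omega(j)=d^{m-k-1-j}q^{k+1}$ for $j\le m-k-1$ and $\omega(j)=q^{m-j}$ for $j\ge m-k$. Then $\fa(k+1)-\fa(k)=G-\delta$, with $\delta=\sum_j(\al_j-v_j)\omega(j)$ and $G=(q-d)q^k\sum_{j=l}^{m-k-1}\al_jd^{m-k-1-j}$. Here $G$ is the gain from re-weighting the positions $j\le m-k-1$. This must include $j=m-k-1$, where the weight goes from $dq^k$ to $q^{k+1}$; your description stops at $j<m-k-1$.

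The mass moved to $p+1$ never fully covers the unit removed at $p$. Without a skip:
\begin{itemize}
\item $\delta=(\deg w-1)d^{m-k-2-p}q^{k+1}$ if $p\le m-k-2$;
\item $\delta=(q-d+\deg w-1)q^{m-p-1}$ if $m-k-1\le p\le m-1$;
\item $\delta=1$ if $\al_m>0$.
\end{itemize}
With the skip, $\delta=\omega(l+1)-\omega(l+2)$. So everything must be paid by $G$, that is, by $\al_l\ge 1$ sitting at a position $l\le m-k-1$.

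There is also no slack. For $w=\al e_{m-k-1}$ one gets $G-\delta=(\al-1)(q-d-1)q^k$, which is identically zero when $\al=1$, where $v=c$. For $w=e_{m-k-2}$ one gets $(d-1)(q-d-1)q^k$. Both vanish at $q=d+1$. Your fallback injection is only a restatement of $G\ge\delta$, and in these cases it would have to be a bijection.

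To close the gap, carry out the subcases explicitly as the paper does, using $q\ge d+1$ throughout:
\begin{itemize}
\item For $p\le m-k-2$ without skip, $\sum_j\al_jd^{p+1-j}\ge d\deg w$ gives $G-\delta\ge d^{m-k-2-p}q^k\bigl[(q-d)d\deg w-(\deg w-1)q\bigr]\ge d^{m-k-2-p}q^k(q-d)$.
\item For $m-k\le p\le m-1$, $G\ge q^k>(q-1)q^{m-p-1}\ge\delta$.
\item For $\al_m>0$, $G\ge q^k\ge 1$.
\item For $p=m-k-1>l$, use $\sum_j\al_jd^{p-j}\ge d+\deg w-1$.
\item In the skip case, check $d^2(q-d)\ge(d-1)q$ for $l\le m-k-3$, and $(d-1)(q-d-1)\ge 0$ for $l=m-k-2$.
\end{itemize}
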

\begin{proof}
By the definition of $\Omega'(k)$ in Equation \ref{eq: omegap}, we see that:
\[\Omega'(k+1)=\Omega'(k) \cup \{(\underbrace{0,\ldots,0,d,}_{m-k}
0,\ldots,0)\}\]
The position of $w'_{s(k)}$ in $\Omega'(k+1)$ is
\begin{align*}
\#\{w \in \Omega'(k+1) \mid w \geq_{\lex} w'_{s(k)}\}
&=\#\{w \in \Omega'(k) \mid w \geq_{\lex} w'_{s(k)}\}
+\mathbbm{1}_{\{(\underbrace{{\scriptstyle 0,\ldots,0,d}}_{m-k},0,\ldots,0)
\geq_{\lex} w'_{s(k)}\}} \\
&=s(k)
+\mathbbm{1}_{\{(\underbrace{{\scriptstyle 0,\ldots,0,d}}_{m-k},0,\ldots,0)
\geq_{\lex} w'_{s(k)}\}}.
\end{align*}
\begin{enumerate}
\item If $l \geq m-k$: then $(\underbrace{0,\ldots,0,d,}_{m-k} 0,\ldots,0)\geq_{\lex}w'_{s(k)}$. Hence the position of $w'_{s(k)}$ in $\Omega'(k+1)$ is $s(k)+1=s(k+1)$. We have that $w'_{s(k+1)}=w'_{s(k)}$. Hence, $\fa(k)=\fa(k+1)$. Repeating the same argument, we obtain
$\fa(k)=\fa(k+1)=\cdots=\fa(m-1)=H_r(d,m;q)$.
\item If $1 \leq l \leq m-k-1$: then $(\underbrace{0,\ldots,0,d,}_{m-k}
  0,\ldots,0)<_{lex}w'_{s(k)}$. Therefore, the position of $w'_{s(k)}$ in $\Omega'(k+1)$ is $s(k)$. Hence, $w'_{s(k+1)}$ is the next element in lexicographical ordering after $w'_{s(k)}$. Write $w'_{s(k)}=(0, \dots, \al_l, \dots, \al_t \dots, 0)$, so that $l$ is the smallest non-zero index and $t$ is the largest non-zero index. 
  Write $s=\deg(w'_{s(k)})$, so that $s \leq d$. We see that
\[
w'_{s(k+1)}=
\begin{cases}
(0,\dots,\al_l,\dots,\al_t-1,
d-s+1,0,\dots,0),
& t\neq l \text{ or } \al_l\ge 2,\ t\le m-1,\\

(0,\dots,\al_l,\dots,\al_m-1),
& t=m,\\

(\underbrace{0,\ldots,0}_l, d-1,1,0, \ldots,0),
& t=l, \ \al_l=1,\ l\le m-k-2,\\

(\underbrace{0,\ldots,0}_l,d,0,\dots,0),
& t=l,\ \al_l=1,\ l=m-k-1.\\
\end{cases}
\]
  
  Then we have:
  \begin{equation*}
      \begin{split}
          \fa(k) &=q^k\sum_{j=l}^{m-k-1}\al_jd^{m-k-1-j} \cdot d +\sum_{j=m-k}^{t-1}\al_j q^{m-j}+\al_tq^{m-t}, \\
          \fa(k+1)&=q^k\sum_{j=l}^{m-k-1}\al_jd^{m-k-1-j} \cdot q +\sum_{j=m-k}^{t-1}\al_j q^{m-j}+(\al_t-1)q^{m-t}+(d-s+1)q^{m-t-1}.\\
      \end{split}
  \end{equation*}
From which we have:
\begin{enumerate}
\item If $t=l$:
\[
\fa(k+1)-\fa(k)
=
\begin{cases}
d^{m-k-2-l}q^k
\left[\al_l d(q-d)-(\al_l-1)q\right],
& \al_l\ge 2,\ t\le m-k-2,\\

(\al_l-1)(q-d-1)q^k,
&
\al_l \ge 2, t=m-k-1,\\

d^{m-k-3-l}q^k
\left[d^2(q-d)-(d-1)q\right],
& \al_l=1,\ t\le m-k-3,\\

\left[(d-1)q+1-d^2\right]q^k,
&  \al_l=1,\ t=m-k-2,\\

0,
&  \al_l=1,\ t=m-k-1.
\end{cases}
\]
\item If $t \geq l+1$:
\[
\fa(k+1)-\fa(k)
=
\begin{cases}
d^{m-k-2-t}q^k
\left[
(q-d)\sum_{j=l}^{t}\al_jd^{t+1-j}
-q\left(s-1\right)
\right],
& t\le m-k-2,\\

q^k
\left[
(q-d)\sum_{j=l}^{t}\al_jd^{t-j}
+d-s+1-q
\right],
& t=m-k-1,\\

q^{m-t-1}
\left[
(q-d)q^{k-m+t+1}
\sum_{j=l}^{m-k-1}\al_jd^{m-k-1-j}
+d-s+1-q
\right],
& m-k\le t\le m-1,\\

(q-d)q^k
\sum_{j=l}^{m-k-1}\al_jd^{m-k-1-j}
-1,
& t=m.\\
\end{cases}
\]
\end{enumerate}
  \end{enumerate}
From the above, we see that in the following cases, we have $\fa(k+1)-\fa(k) \geq q^k$:
\begin{enumerate}
    \item If $t=l$:
    \begin{itemize}
        \item $ l \leq m-k-3$.
        \item $l=m-k-2$ and $\al_l \geq 2$ or $\al_l=1$ and $q \geq d+2$. Otherwise, the difference is $0$.
        \item $l=m-k-1$ and $\al_l \geq 2$ and $q \geq d+2$. Otherwise, the difference is $0$.
        \end{itemize}
        
        \item If $t \geq l+1$:
        \begin{itemize}
            \item $ t \leq m-k-1$.
            \item $m-k \leq t \leq m-1$ except for $l=m-k-1,\al_l=1,q=d+1$,  \\
            when the difference is $q^k-sq^{m-t-1}\geq q^{m-t-1}>0$ since $s\leq d$.
            \item $t=m$ except for $l=m-k-1,\al_l=1,q=d+1$, when the difference is $q^k-1$.
        \end{itemize}
\end{enumerate}
\end{proof}

\section{The affine algebraic conjecture and EGH conjecture}\label{sec: EGH and affine}

In this section we will show that $\EGH(d^{m-k}, q^k)_m$ implies Conjecture \ref{conj: affine conjecture 2}. We focus on standard graded Artinian algebras over $\kk$, where $\kk$ is an algebraically closed field. For a standard graded Artinian $\kk$ algebra $A$,
\begin{itemize}
\item  We use $A_i$ to denote the $i$-th graded piece of $A$.
\item We use $h_{A}$ for the Hilbert function of $A$, so $h_{A}(i)=\dim_{\kk}(A_i)$
\end{itemize}
\begin{proposition}\label{prop: EGH implies conjecture 2}
Let $A=S/I$ be a standard graded Artinian algebra over $\kk$, such that $h_A(1)=m$ and $I$ contains a regular
sequence of degree $\dqmk$. If $\EGH(d^{m-k},q^k)_m$ holds for $A$, then
the conclusion in Conjecture \ref{conj: affine conjecture 2} is true. That is,
\[
\dim_{\kk}(A_{\leq d}) \leq \tbinom{m+d}{d}-r
\implies
\dim_{\kk}(A) \leq \fA.
\]
\end{proposition}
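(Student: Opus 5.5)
By $\EGH(d^{m-k},q^k)_m$ we may replace $A$ by its lex-plus-powers model $S/L$, since both $\dim_\kk A_{\le d}$ and $\dim_\kk A$ depend only on the Hilbert function. Write $\Lambda:=\Lambda(A)\subseteq\B=\B(d,m,k;q)$. By Proposition~\ref{prop: properties of lex-plus-power}, $\Lambda$ is closed and compressed, and $\dim_\kk A=|\Lambda|$. Since $\fA=|\Lambda_{s(k)}|$ with $\Lambda_{s(k)}=\{w\in\B\mid w<_{\lex}w'_{s(k)}\}$, it suffices to prove $|\Lambda|\le|\Lambda_{s(k)}|$. We may assume $s(k)\ge1$, since otherwise $\fA=|\B|$ and the bound is trivial.

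\textbf{Reduction to degree $\le d$.} The elements of $\B$ of degree $\le d$ are exactly $\Omega'(d,m,k;q)\cup\CC(d,m,k)$, where the elements of $\CC$ are not in $\B$ because $\al_i\le d-1$ for $i\le m-k$. So $\B_{\le d}=\Omega'$ has size $\binom{m+d}{d}-(m-k)$. The hypothesis $|\Lambda_{\le d}|=\dim A_{\le d}\le\binom{m+d}{d}-r$ then says that at least $r-(m-k)=s(k)$ elements of $\Omega'$ are missing from $\Lambda$.

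\textbf{Key step.} I claim $\Lambda$ misses some element $w\in\Omega'$ with $w\ge_{\lex}w'_{s(k)}$; in fact, some missing element of degree $\le d$ is lex-$\ge w'_{s(k)}$. Granting this, let $u=w'_{s(k)}$ and $v=w$, and note $\deg u\le d$. Every element $x\in\B$ with $x\ge_{\lex}u$ dominates, coordinatewise, an element of degree $\le\deg$ that is lex-$\ge$ some missing degree-$\le d$ element. More precisely, I will show that $\Lambda\subseteq\{x<_{\lex}u\}$. Take $x\in\Lambda$ with $x\ge_{\lex}u$, and let $p$ be the first index with $x_p\ne u_p$, or $x=u$. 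Truncate $x$: keep $x_1,\dots,x_{p}$ and adjust later coordinates to obtain $y\preccurlyeq x$ with $\deg y=\deg u$ and $y\ge_{\lex}u$, which is possible since $x_p>u_p$. By closedness $y\in\Lambda$. Compressedness in degree $\deg u$ then forces $u\in\Lambda$, and applying the same argument to every element of $\Omega'$ lex-above $u$ (all of which have degree $\le d$ and can be handled via degree-$\deg$ compressedness together with closedness) shows that all of the $s(k)$ elements of $\Omega'$ that are lex-$\ge u$ lie in $\Lambda$. This has to be checked carefully, since compressedness only compares elements of equal degree and I use closedness to move between degrees. It contradicts the count above, because the complement $\Omega'\setminus\Lambda$ has size at least $s(k)$ while $\{w\in\Omega'\mid w<_{\lex}u\}$ has size only $|\Omega'|-s(k)$. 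Hence $\Lambda\subseteq\Lambda_{s(k)}$ and $|\Lambda|\le\fA$.

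\textbf{Main obstacle.} The main obstacle is making the inter-degree comparison precise. From "$u$ lies in $\Lambda$", or from "$x\in\Lambda$ with $x\ge_{\lex}u$", I must deduce that every $w\in\Omega'$ with $w\ge_{\lex}u$ lies in $\Lambda$. The plan for this step is to use closedness to drop an element to degree $\deg w$ while keeping it lex-$\ge w$, reducing the first coordinate where it exceeds $w$ last. Then compressedness in that degree places $w$ in $\Lambda$. Some care is needed when $\deg w>\deg x$; there I would instead argue via the lex-initial segment structure, namely that $w\ge_{\lex}u$ with $w\in\Omega'$ implies that the degree-$(\deg w)$ elements lex-above $w$ are all dominated by lex-large elements of smaller degree.
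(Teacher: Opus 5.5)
\textbf{There is a genuine gap.} The containment $\Lambda\subseteq\Lambda_{s(k)}=\{x\in\B\mid x<_{\lex}u\}$ that you set out to prove is false. Extra care in the inter-degree step cannot rescue it.

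Take $m=2$, $k=1$, $d=3$, $q=4$, $r=7$. Then $\B=\{(\al_1,\al_2):\al_1\le 2,\ \al_2\le 3\}$ and $|\Omega'|=9$. Also $s(k)=6$ and $u=w'_6=(0,3)$, so $\Lambda_{6}=\{(0,0),(0,1),(0,2)\}$ and $\fA=3$.

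The set $\Lambda=\{(0,0),(0,1),(1,0)\}$ is closed and compressed. It is $\Lambda(A)$ for $A=\kk[x_1,x_2]/(x_1,x_2)^2$, which is its own lex-plus-powers model of degree $(3,4)$. It satisfies $|\Lambda\cap\Omega'|=3=\binom{5}{3}-7$, yet it contains $(1,0)>_{\lex}u$.

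This is exactly your case $\deg x<\deg u$. Your truncation step is fine when $\deg x\ge\deg u$, but closedness cannot raise degrees, and a lex-large element of low degree forces nothing about $u$. The statement holds only at the level of cardinalities (here $|\Lambda|=3\le 3$), so a global argument is needed.

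Separately, your final count runs in the wrong direction. Knowing that $\Lambda$ contains the $s(k)$ elements of $\Omega'$ that are lex-$\ge u$ does not contradict $|\Omega'\setminus\Lambda|\ge s(k)$. A contradiction needs $\Lambda$ to meet $\Omega'$ in at least $|\Omega'|-s(k)+1=\binom{m+d}{d}-r+1$ elements.

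\textbf{The missing idea} is a compression statement (Proposition~\ref{prop: compression lemma}): among closed, compressed $\Lambda\subseteq\B$ with $|\Lambda|=N$, the lex-initial segment $\Sigma_N$ minimizes $|\Lambda\cap\Omega'|$.

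The paper proves this by repeatedly replacing $y=\max_{\lex}\Lambda$ with $x=\min_{\lex}(\B\setminus\Lambda)$. Closedness and compressedness force $\deg x>\deg y$: equal degree contradicts compressedness, and if $\deg x<\deg y$ one builds $c\preccurlyeq y$ with $\deg c=\deg x$ and $c>_{\lex}x$. Hence the degree-$\le d$ count cannot increase. One also checks that the new set is again closed and compressed.

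With this lemma the proof is short. If $|\Lambda|>|\Lambda_{s(k)}|$, then $\Sigma_{|\Lambda|}\supseteq\Lambda_{s(k)}\cup\{u\}$ with $u\in\Omega'$. This gives $|\Lambda\cap\Omega'|\ge|\Sigma_{|\Lambda|}\cap\Omega'|\ge\binom{m+d}{d}-r+1$, a contradiction.
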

Assuming $\EGH(d^{m-k},q^k)_m$, let $\Lambda:=\Lambda(A)$, $\Omega':=\Omega'(d,m,k;q)$, and $\B:=\B\dmkq$. We have $\dim_{\kk}(A_{\leq d})=|\Lambda\cap\Omega'|$ and 
$\dim_{\kk}(A)=|\Lambda|$. So, we just need to show that for all the subsets $\Lambda \subseteq \B$ that are closed and compressed, 
\begin{equation*}
|\Lambda \cap \Omega'| \leq \tbinom{m+d}{d}-r \implies |\Lambda| \leq \fA.
\end{equation*}

To prove this proposition, we consider a specific type of subset $\Lambda_s$.
\begin{definition}
For $1 \leq s \leq |\Omega'|=\binom{m+d}{d}-(m-k)$, let $w'_s$ denote the $s$-th lexicographically largest element in $\Omega'$. We define
\[
\Lambda_s=\{w \in \B \mid w <_{\lex}w'_s\}.
\] 
We use the convention that $\Lambda_0=\B$.
\end{definition}
Notice that $\Lambda_s$ are closed and compressed. Moreover, if we take $s=r-(m-k)$, we get that $|\Lambda_{r-(m-k)} \cap \Omega'|=\binom{m+d}{d}-r$ and $|\Lambda_{r-(m-k)}|=\fA$, so it achieves equality in Conjecture~\ref{conj: affine conjecture 2}. 
\begin{definition}
    Let $1 \leq N \leq |\B|=d^{m-k}q^k$. We define:
    \[\Sigma_N:=\{w \in \B \mid w\text{ is among the lex-smallest $N$ elements in $\B$}\}.\]
    Notice that if we let $N=|\Lambda_s|$, then $\Lambda_s=\Sigma_N$.
\end{definition}
So, Proposition \ref{prop: EGH implies conjecture 2} is equivalent to the following.
\begin{proposition}\label{prop: Ls is optimal}
Let \(\Lambda\subseteq \B\) be closed and compressed.
If $|\Lambda \cap \Omega'|\leq \binom{m+d}{d}-r$, then $|\Lambda| \leq |\Lambda_{r-(m-k)}|$.
\end{proposition}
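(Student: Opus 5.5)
The plan is to argue by contradiction. We reduce the statement to a comparison between an arbitrary closed, compressed $\Lambda$ and the lex-initial segment $\Sigma_N$ of the same size. Set $s=r-(m-k)$ and $N=|\Lambda|$, and suppose $N>|\Lambda_s|$.

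First I record two facts about $\Lambda_s$. Since $\Lambda_s=\{w\in\B\mid w<_{\lex}w'_s\}$ and $\Omega'\subseteq\B$, we have
\[
|\Lambda_s\cap\Omega'|=|\Omega'|-s=\tbinom{m+d}{d}-(m-k)-s=\tbinom{m+d}{d}-r .
\]
Moreover, $w'_s\in\B$ is exactly the lex-smallest element of $\B\setminus\Lambda_s$, so $\Lambda_s\cup\{w'_s\}=\Sigma_{|\Lambda_s|+1}\subseteq\Sigma_N$. Since $w'_s\in\Omega'$, this gives
\[
|\Sigma_N\cap\Omega'|\geq\tbinom{m+d}{d}-r+1
\]
(for $s=0$, read $\Lambda_0=\B$ and the claim is trivial). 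It therefore suffices to prove the following key lemma: \emph{among all closed and compressed $\Lambda\subseteq\B$ with $|\Lambda|=N$, the quantity $|\Lambda\cap\Omega'|$, i.e. the number of elements of degree $\le d$, is minimized by $\Sigma_N$.} Combined with the displayed inequality, the lemma contradicts the hypothesis $|\Lambda\cap\Omega'|\le\binom{m+d}{d}-r$.

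To prove the key lemma I will induct on $m$ by slicing along the first coordinate. For $0\le a\le a_1-1$, where $a_1=d$ if $k<m$, set $\Lambda^{(a)}=\{w\in\B'\mid (a,w)\in\Lambda\}$, with $\B'$ the box in the last $m-1$ coordinates.
\begin{enumerate}
\item Closedness of $\Lambda$ makes each $\Lambda^{(a)}$ closed and gives the nesting $\Lambda^{(0)}\supseteq\Lambda^{(1)}\supseteq\cdots$.
\item Compressedness of $\Lambda$ makes each $\Lambda^{(a)}$ compressed in $\B'$, since lex order with the first coordinate fixed is lex order on the rest.
\item The quantity to control is $\sum_a|\{w\in\Lambda^{(a)}\mid\deg w\le d-a\}|$.
\end{enumerate}
By the inductive hypothesis, applied with threshold $d-a$ in place of $d$, I may replace each slice by the lex-initial segment $\Sigma'_{|\Lambda^{(a)}|}$ of $\B'$ of the same size. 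This does not increase the count, and it preserves the nesting because initial segments of decreasing sizes are nested. So the inductive statement must be proved for all thresholds $t$, not just $t=d$, and the slices must be allowed to have different sizes. I will therefore formulate the lemma in that generality.

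After this reduction, $\Lambda$ is determined by a non-increasing sequence of slice sizes $n_0\ge n_1\ge\cdots$, while $\Sigma_N$ corresponds to the sequence that fills slices $0,1,\dots$ completely in order. The final step is an exchange argument. Move elements from the highest non-full slice $a'$ to the lowest non-full slice $a<a'$. In slice $a'$ we remove the lex-largest elements of the current segment, and in slice $a$ we add the next lex-smallest elements of $\B'$ not yet present. I must show the count of elements of degree $\le d$ does not increase, which amounts to a comparison of the form
\[
g_{d-a}(n_a+1)-g_{d-a}(n_a)\;\le\; g_{d-a'}(n_{a'})-g_{d-a'}(n_{a'}-1),
\]
where $g_t(n)$ counts the elements of degree $\le t$ among the first $n$ lex-smallest elements of $\B'$.

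This comparison is the main obstacle: it requires a monotonicity or concavity property of $g_t$ in both $n$ and $t$. It is plausible because later lex elements of $\B'$ have smaller early coordinates, hence tend to have smaller degree, but verifying it cleanly is the hard part. My first attempt would be to describe the $n$-th element of $\B'$ explicitly in mixed radix $(d,\dots,d,q,\dots,q)$ and compare degrees directly. If that fails, I would replace the single-element exchange with a whole-block exchange between slices and use the Clements–Lindström theorem on the box $\B'$ to control the degree distribution.
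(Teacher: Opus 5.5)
\textbf{Gap in the proof of the key lemma.} Your reduction is correct, and it is exactly how the paper proves this proposition. You show $|\Lambda_s\cap\Omega'|=\binom{m+d}{d}-r$ and $w'_s=\min_{\lex}(\B\setminus\Lambda_s)$, so $w'_s\in\Sigma_N$ once $N>|\Lambda_s|$. Everything then rests on your key lemma, which the paper states and proves separately as Proposition~\ref{prop: compression lemma}.

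The gap is in your proof of that lemma. The comparison you flag as the main obstacle is not a property of the functions $g_t$. It is false on configurations produced by your own slice-replacement step. Take $m=3$, $k=2$, $d=3$, $q=4$, so $\B'=\{0,\dots,3\}^2$. Let $\Lambda^{(0)}=\{w\in\B' : \deg w\le 3\}\cup\{(1,3),(2,2)\}$, $\Lambda^{(1)}=\{(0,0),(0,1),(1,0),(0,2)\}$ and $\Lambda^{(2)}=\emptyset$. One checks degree by degree that $\Lambda$ is closed and compressed. Replace the slices by the first $12$ and the first $4$ elements of $\B'$. Your exchange then removes $(1,0,3)$, of degree $4$, and adds $(0,3,0)$, of degree $3$. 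So $g_3(13)-g_3(12)=1>0=g_2(4)-g_2(3)$, and the number of elements of degree $\le 3$ rises from $12$ to $13$. The cause is that replacing slices by lex segments of $\B'$ destroys compressedness of the whole set: after replacement, $(1,0,2)$ is present but the lex-smaller $(0,3,0)$ of the same degree is not. Compressedness is exactly what a single-element exchange needs.

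\textbf{The missing idea.} Skip the replacement and run the same exchange directly on the closed, compressed $\Lambda$. Take $x=\min_{\lex}(\B\setminus\Lambda)$ and $y=\max_{\lex}\Lambda$; these are your ``next element of the lowest non-full slice'' and ``last element of the highest nonempty slice''. Then $x<_{\lex}y$, and in fact $\deg x>\deg y$.
\begin{itemize}
\item Equal degrees would contradict compressedness.
\item Suppose $\deg x<\deg y$, and let $l$ be the first index with $x_l\neq y_l$. Build $c$ with $c_i=x_i$ for $i<l$, $c\preccurlyeq y$, $\deg c=\deg x$ and $c>_{\lex}x$.
\begin{itemize}
\item If $\deg y-\deg x<y_l-x_l$, set $c_l=y_l-(\deg y-\deg x)$ and $c_j=y_j$ for $j>l$.
\item Otherwise, set $c_l=x_l+1$ and choose $c_j\le y_j$ for $j>l$ with $\sum_{j>l}c_j=\sum_{j>l}x_j-1$. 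This is possible because $\sum_{j>l}x_j\ge 1$; otherwise $x\preccurlyeq y$ and closedness would give $x\in\Lambda$.
\end{itemize}
Then $c\in\Lambda$ by closedness, and so $x\in\Lambda$ by compressedness, a contradiction.
\end{itemize}
Next, check that $(\Lambda\setminus\{y\})\cup\{x\}$ is again closed and compressed, so the step can be iterated until $\Sigma_N$ is reached. Since $\deg x>\deg y$ at each step, the number of elements of degree $\le d$ never increases. This argument works for every threshold at once, so neither the induction on $m$ nor the generalized thresholds are needed.
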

To prove this proposition, we first need a lemma, which is the consequence of \cite[Corollary 3]{clements1969generalization}.

\begin{proposition}\label{prop: compression lemma}
Let $\Lambda \subseteq \B$ be closed and compressed. If $|\Lambda|=N$, then $|\Lambda \cap \Omega'| \geq |\Sigma_N \cap \Omega'|$. In other words, $\Sigma_N$ are the subsets of $\B$ of size $N$ that minimizes the degree $\leq d$ part. 
\end{proposition}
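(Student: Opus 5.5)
We will deduce this from the Clements--Lindström theorem \cite[Corollary 3]{clements1969generalization}. In our language, that theorem says the following. Let $\B$ be a box $\prod_j\{0,\dots,a_j-1\}$ with $a_1\le\dots\le a_m$, which our degree sequence $(d^{m-k},q^k)$ satisfies since $q\ge d+1$. For a degree $i$ and a subset $T\subseteq\B(i)$ of size $t$, let $\partial T\subseteq\B(i-1)$ be the shadow, i.e. all elements $w'\prec w$ of degree $i-1$ for $w\in T$. Then $|\partial T|$ is at least the size of the shadow of the $t$ lex-smallest elements of $\B(i)$, and that shadow is itself an initial segment in the lex order (compressed). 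The ordering convention matters here. The paper's compressed sets use lex-smallest elements with $x_1$ most significant, while Clements--Lindström is stated for the reverse order on coordinates with nondecreasing bounds. We will check that these match, reversing coordinates if necessary.

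The plan has three steps.

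\textbf{Step 1: only degree profiles matter.} A closed, compressed $\Lambda$ is determined by its profile $h(i)=|\Lambda(i)|$, because $\Lambda(i)$ is the lex-initial segment of $\B(i)$ of length $h(i)$. Closedness is equivalent to $\partial(\text{seg}_i(h(i)))\subseteq\text{seg}_{i-1}(h(i-1))$. By the compression property of shadows, this is equivalent to $h(i-1)\ge|\partial\,\text{seg}_i(h(i))|$. The quantity to compare is
\[
|\Lambda\cap\Omega'|=\sum_{i\le d}h(i)-\#\bigl(\Lambda\cap\CC(d,m,k)\bigr).
\]
The elements of $\CC$ are $d e_j$ for $j\le m-k$. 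These lie outside the box, since there $\alpha_j\le d-1$, so $\Omega'\cap\B=\{w\in\B:\deg w\le d\}$. Hence $|\Lambda\cap\Omega'|=\sum_{i\le d}h(i)$.

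\textbf{Step 2: the target set is greedy.} For fixed total size $N$, we want to show that $\sum_{i\le d}h(i)$ is minimized by $\Sigma_N$, the $N$ lex-smallest elements of the whole box. Note that the lex order with $x_1$ first puts high-degree elements on either side, so $\Sigma_N$ is not a union of top degrees in general. We will first verify that $\Sigma_N$ is closed, which is clear: if $w'\preccurlyeq w$ then $w'\le_{\lex}w$. We will also verify that $\Sigma_N$ is compressed, since its restriction to each degree is a lex-initial segment. Its profile is $h^*(i)=|\Sigma_N(i)|$.

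\textbf{Step 3: the comparison.} We expect this to be the main obstacle. We must show that every admissible profile $h$ with $\sum_i h(i)=N$ satisfies $\sum_{i\le d}h(i)\ge\sum_{i\le d}h^*(i)$. Equivalently, $\sum_{i>d}h(i)\le\sum_{i>d}h^*(i)$: among closed compressed sets of size $N$, $\Sigma_N$ places the most mass in degrees above $d$.

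We would first try a direct appeal to the Clements--Lindström corollary on maximizing weight or rank sums. Their Corollary 3 states that, for downsets (ideals) in the box of given size, the initial segment of the appropriate order minimizes the number of elements of rank at most a given value. In our case that value is $d$, and this is exactly the statement. So the proof reduces to matching their order with $<_{\lex}$ on $\B$ and their rank with $\deg$.

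If the order does not match verbatim, the fallback is an exchange argument. Suppose $\Lambda\ne\Sigma_N$. Take the lex-largest element $u\in\Lambda$ and the lex-smallest element $v\in\B\setminus\Lambda$, so $v<_{\lex}u$. Replace $\Lambda$ by $\Lambda\setminus\{u\}\cup\{v\}$, followed by recompression in each degree using the Clements--Lindström shadow bound to restore closedness. We must check that each such move does not increase the degree-$\le d$ count. This is the delicate part: when $\deg v\le d<\deg u$ the count could increase. That case would be handled by instead choosing $v$ as the lex-smallest missing element of degree $>d$ and showing, via the shadow inequality, that it is addable while keeping the set closed.

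Finally, the case $N=|\B|$ is trivial, and the monotonicity of $N\mapsto|\Sigma_N\cap\Omega'|$ follows immediately from the definition. Together with Step 3, this gives the claim and, downstream, Proposition \ref{prop: Ls is optimal}.
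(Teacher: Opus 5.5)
Your Steps 1 and 2 are fine. Step 3, however, carries the whole content of the statement, and it is not actually established. The appeal to Clements--Lindstr\"om is conditional on matching their order and rank with $<_{\lex}$ and $\deg$ on $\B$, and you do not verify that matching. The paper does attribute the statement to their Corollary 3, but then proves it directly. So your argument has to rest on your fallback.

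The fallback uses exactly the right exchange, $v=\min_{\lex}(\B\setminus\Lambda)$ and $u=\max_{\lex}(\Lambda)$. It misses the one fact that makes the exchange work: for $\Lambda$ closed and compressed, one always has $\deg v>\deg u$. Equal degrees are excluded by compressedness. Suppose instead $\deg v<\deg u$. Let $p$ be the first index with $v_p\neq u_p$ (so $v_p<u_p$), and let $D=\deg u-\deg v$. Lower $u$ to some $c\preccurlyeq u$ with $\deg c=\deg v$ and $c>_{\lex}v$, as follows.
\begin{itemize}
\item If $D<u_p-v_p$, take $c_p=u_p-D$ and $c_i=u_i$ for $i\neq p$.
\item Otherwise, take $c_i=u_i$ for $i<p$, $c_p=v_p+1$, and a tail $c_j\le u_j$ for $j>p$ summing to $\sum_{j>p}v_j-1$. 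Such a tail exists because $\sum_{j>p}u_j\ge\sum_{j>p}v_j\ge 1$. If the last sum were $0$, then $v\prec u$, and closedness would force $v\in\Lambda$.
\end{itemize}
Closedness gives $c\in\Lambda$, and compressedness then gives $v\in\Lambda$, a contradiction.

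Once $\deg v>\deg u$ is known, the case $\deg v\le d<\deg u$ that you flag as delicate never occurs, and no recompression is needed. Set $\Lambda'=\Lambda\setminus\{u\}\cup\{v\}$.
\begin{itemize}
\item $\Lambda'$ is already closed. Predecessors of any $w\in\Lambda\setminus\{u\}$ are lex-below $u$. Any $w'\prec v$ is lex-smaller than $v$, hence lies in $\Lambda$ and differs from $u$.
\item $\Lambda'$ is levelwise an initial segment, i.e. compressed.
\item It moves one element to a strictly higher degree, so the number of elements of degree $\le d$ cannot increase.
\item Since $v\in\Sigma_N$ while $u\notin\Sigma_N$, it raises $|\Lambda\cap\Sigma_N|$ by one, so iterating ends at $\Sigma_N$.
\end{itemize}

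Your proposed patches do not hold up. Recompressing within levels leaves the profile $h$ unchanged. It therefore cannot repair a violated shadow inequality $h(i-1)\ge|\partial\,\mathrm{seg}_i(h(i))|$. There is also no reason the lex-smallest missing element of degree $>d$ should have all its predecessors in $\Lambda$. Nor would such a move keep a monotone progress measure toward $\Sigma_N$.
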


\begin{proof}
Given $\Lambda$, if $\Lambda \neq \Sigma_N$, we show that we can construct $\Lambda'$ such that:
\begin{enumerate}
\item $|\Lambda'\cap \Sigma_N|=|\Lambda\cap \Sigma_N|+1 $,
\item $|\Lambda' \cap \Omega'|\leq |\Lambda \cap \Omega'|$,
\item $\Lambda'$ is closed and compressed with $|\Lambda'|=|\Lambda|$.
\end{enumerate}
Suppose $\Lambda \neq \Sigma_N$. Then, let $x=\min_{\lex}(\B \backslash \Lambda)$ and $y=\max_{\lex}(\Lambda)$. 
We will take $\Lambda'=\Lambda \backslash \{y\} \cup \{x\}$. 
\begin{enumerate}
\item We claim $x <_{\lex}y$. Since $\Lambda \neq \Sigma_N$, we know $\B \backslash \Lambda \neq \B \backslash \Sigma_N$. Hence, 
\[
x
=\min_{\lex}(\B \backslash \Lambda)
<_{\lex} \min_{\lex}(\B \backslash \Sigma_N).
\] 
So $x$ is among the $N$ lexicographically smallest elements of $\B$, that is $x \in \Sigma_N$. On the other hand, 
\[
y
=\max_{\lex}(\Lambda)
>_{\lex}\max_{\lex}(\Sigma_N).
\] 
So $y \not \in \Sigma_N$ and hence $x<_{\lex}y$. Consequently, $|\Lambda' \cap \Sigma_N|=|\Lambda \cap \Sigma_N|+1$.

\item We claim that $\deg(x)>\deg(y)$. Since $\Lambda$ is compressed, if $\deg(x)=\deg(y)$ and $x<_{\lex}y$, one must have $x \in \Lambda$, contradiction. 

Now assume for the sake of contradiction that $\deg(x)<\deg(y)$. We will construct $c \in \B$ such that:
\begin{itemize}
    \item $c \preccurlyeq y$,
    \item $x<_{\lex}c$,
    \item $\deg(c)=\deg(x)$.
\end{itemize}
If such $c$ can be constructed, then we will get a contradiction. Indeed, since $\Lambda$ is closed, $c \preccurlyeq y \implies c \in \Lambda$. Since $\Lambda$ is compressed, $\deg(c)=\deg(x)$ and $x<_{\lex}c$ implies that $x \in \Lambda$.

Now we construct the element $c$. Let $D=\deg(y)-\deg(x)$. Let $l$ be the first index that $x_l \neq y_l$, so $x_l<y_l$. We let $c_i=x_i=y_i$ for $1 \leq i \leq l-1$. For the index from $l$, we break into two cases.
\begin{enumerate}
    \item If $D<y_l-x_l$. Then, let $c_l=y_l-D$ and $c_j=y_j$ for $j \geq l+1$. Then $c_l>_{lex}x_l$, $\deg(c)=\deg(x)$ and $c\preccurlyeq y$.
    \item If $D \geq y_l-x_l$. Then, $\sum_{j>l}y_j \geq \sum_{j>l}x_j$. Denote by $X_l$ the sum $\sum_{j>l}x_j$. Notice that $X_l\geq 1$. Indeed, if $X_l=0$, then $x_j=0$ for all
$j>l$. Since $x_i=y_i$ for $i<l$ and $x_l<y_l$, we would have
$x<y$ coordinatewise. Since $y\in\Lambda$ and $\Lambda$ is closed,
this would imply $x\in\Lambda$, a contradiction.

We take $c_l=x_l+1$. Then, we need to find $\{c_j\}_{j>l}$ such that $c_j \leq y_j$ and $\sum_{j>l}c_j=X_l-1$. We can first take $c_j=y_j$ and then subtract from each coordinate until the sum $\sum_{j>l}c_j$ is equal to $X_l-1$.
    \end{enumerate}
    Therefore, $\deg(x)>\deg(y)$. Since $\Lambda'=\Lambda \backslash\{y\} \cup \{x\}$, the degree $\leq d$ elements can not increase. Hence, $
    |\Lambda' \cap \Omega'| \leq |\Lambda \cap \Omega'|$.
    \item We will show that $\Lambda'=\Lambda \backslash \{y\} \cup \{x\}$ is closed and compressed.
    \begin{itemize}
        \item First we show that $\Lambda'$ is closed. Consider $w \in \Lambda'$, and $w'\prec w$. 
        If $w \in \Lambda \backslash\{y\}$, then $w' \in \Lambda$ since $\Lambda$ is closed. 
        Moreover, since $w'<_{\lex} w <_{\lex}y$, we have $w' \in \Lambda \backslash \{y\} \subseteq \Lambda'$. 
        On the other hand, if $w=x$, then in particular $w'<_{\lex}x$. Since $x=\min_{\lex}(\B \backslash \Lambda)$, this implies $w' \in \Lambda$. This shows that $\Lambda'$ is closed. 
        
        \item Next we show that $\Lambda'$ is compressed. We just need to consider the layer of $\Lambda'$ of $\deg(y)$ or $\deg(x)$. 
        Since $y$ is the lex-max element in $\Lambda$, in particular it is the lex-max in $\Lambda(\deg(y))$, so removing $y$ does not impact the property of being compressed. 
        Since $x$ is the lex-min element in $\B\setminus\Lambda$, in particular it is the lex-min in $(\B\setminus \Lambda)(\deg(x))$, so adding $x$ does not impact the property of being compressed either.
        Thus $\Lambda'$ is compressed.  
    \end{itemize}
\end{enumerate}
Starting from $\Lambda \neq \Sigma_N$ and 
repeatedly applying this operation finitely many times gives \(\Sigma_N\). At every step $|\Lambda' \cap \Omega'|$ does not increase. Hence
$|\Sigma_N\cap \Omega'| \leq |\Lambda\cap \Omega'|$.
This proves the proposition.
\end{proof}

\begin{proof}[Proof of Proposition \ref{prop: Ls is optimal}]
If $r=m-k$, then $\Lambda_{r-(m-k)}=\B$ so $|\Lambda| \leq |\Lambda_{r-(m-k)}|$. 

Assume $r>m-k$. Given $\Lambda$, let $N=|\Lambda|$ and suppose $N>|\Lambda_{r-(m-k)}|$. Then we have $w'_{r-(m-k)}\in \Sigma_N$. Since $w'_{r-(m-k)} \in \Omega'$, we have
$|\Sigma_N\cap \Omega'|
\geq |\Lambda_{r-(m-k)}\cap \Omega'|+1$. Hence, by Proposition \ref{prop: compression lemma}, we have 
\[
|\Lambda \cap \Omega'| 
\geq |\Lambda_{r-(m-k)}\cap \Omega'|+1
=\tbinom{m+d}{d}+1-r
\] 
which contradicts the assumption that $|\Lambda \cap \Omega'| \leq \binom{m+d}{d}-r$.
This proves the proposition.
\end{proof}

\section{Equality construction of the affine geometric conjecture}\label{sec: affine conjecture equality}
In this section, we want to show that the conjectured formula $\fA$ is at least a lower bound for the quantity $\eA$.

\begin{proposition}\label{prop :affine-geometric-equality}
Given $d,m\geq 1$, $0\leq k\leq m-1$, $m-k\leq r \leq \binom{m+d}{d}$ and $q\geq d+1$,
we have
\[
\fA \leq \eA.
\]
\end{proposition}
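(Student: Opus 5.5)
We will give an explicit construction: an $r$-dimensional space $W\subseteq A_{\le d}(m,\Fq)$ with $\dim Z(W)\le k$ and $|Z(W)(\Fq)|=\fA=|\Lambda_{s}|$, where $s=r-(m-k)$. The model is the Heijnen--Pellikaan footprint construction, with the first $m-k$ coordinates capped at $d$ instead of $q$. Since $q\ge d+1$, fix distinct $a_1,\dots,a_q\in\Fq$, and for $0\le e\le q$ put $P_e(x)=\prod_{i=1}^{e}(x-a_i)$. To a monomial exponent $w=(\al_1,\dots,\al_m)\in\Omega(d,m)$ we attach the polynomial $P_w:=\prod_j P_{\al_j}(x_j)$, of degree $\deg w\le d$. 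The $m-k$ elements of $\CC(d,m,k)$ give $P_d(x_1),\dots,P_d(x_{m-k})$. Their common zero set is the product $\{a_1,\dots,a_d\}^{m-k}\times\A^k$, which has $d^{m-k}$ components, each of dimension $k$, and $d^{m-k}q^k=|\B|$ rational points. This settles the case $r=m-k$.

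For $r>m-k$ we take
\[
W=\langle P_d(x_1),\dots,P_d(x_{m-k})\rangle+\langle P_w : w\in\Omega',\ w\ge_{\lex} w'_{s}\rangle .
\]
Its elements have distinct lex-leading monomials, so $\dim W=(m-k)+s=r$, and $\dim Z(W)\le k$ because $Z(W)$ lies inside the set above. Identify the grid $G=\{a_1,\dots,a_d\}^{m-k}\times\Fq^{k}$ with $\B$ via $(a_{i_1+1},\dots,a_{i_m+1})\leftrightarrow(i_1,\dots,i_m)$. Then $P_w$ vanishes at the point $b\in\B$ if and only if $b_j<\al_j$ for some $j$; equivalently, $P_w$ is nonzero at $b$ if and only if $w\preccurlyeq b$. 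Hence
\[
Z(W)(\Fq)\cong\{b\in\B : \text{there is no } w\in\Omega' \text{ with } w\ge_{\lex}w'_s \text{ and } w\preccurlyeq b\}.
\]

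The remaining step, which is the main obstacle, is to show that this set has exactly $|\Lambda_s|$ elements, i.e. that it equals $\{b\in\B: b<_{\lex}w'_s\}$ or at least has the same cardinality. One inclusion is easy: if $w\preccurlyeq b$ then $w\le_{\lex}b$, so every $b<_{\lex}w'_s$ survives. The converse fails pointwise. For example, $b$ may be lex-large but have large coordinates only at the end, and then no element of $\Omega'$ of degree at most $d$ lying lex-above $w'_s$ sits below it. So exact equality of sets is false in general, and we should instead count.

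The plan is to imitate Heijnen--Pellikaan. Write $w'_s=(\al_1,\dots,\al_m)$. Decompose the surviving points by the first index $j$ at which one of the conditions "$b_i\ge\al_i$ for all $i\le j$" breaks, and count each piece as a product. The first $m-k$ coordinates range over $d$ values and the last $k$ over $q$ values, which produces exactly the terms $\al_j d^{m-k-j}q^k$ for $j\le m-k-1$ and $\al_jq^{m-j}$ for $j\ge m-k$ in \eqref{eq: formula of frA}.

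If this direct count becomes messy, there is a fallback. Replace $P_w$ by a modified choice of $W$ in the style of Heijnen--Pellikaan's optimal construction: take $W$ to be spanned by the $P_d(x_i)$ together with the polynomials $x$-shifted from the $P_w$ chosen along the lex chain. Alternatively, construct $Z(W)$ directly as an explicit union of products of linear subspaces: for each $j$ with $\al_j>0$, take $x_1=\dots=x_{j-1}$ fixed at prescribed values and $x_j$ ranging over $\al_j$ values. This realizes the sum formula, and one then checks that the ideal of this union has dimension at least $r$ in degree at most $d$ by exhibiting the $r$ products of linear forms above.

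Either way, the key verification is the inequality $|Z(W)(\Fq)|\ge\fA$ together with $\dim W\ge r$, and possibly shrinking $W$ to dimension exactly $r$, which only enlarges $Z(W)$. The inequality $\le$ is not needed for this proposition.
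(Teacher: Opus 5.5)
Your construction is correct and already proves the proposition; the step you call the main obstacle is not needed. Since $\eA$ is a maximum, it suffices to exhibit one $W$ with $\dim_{\Fq}W=r$, $\dim Z(W)\le k$ and $|Z(W)(\Fq)|\ge\fA$. Your $W$ has all three properties:
\begin{itemize}
\item The leading monomials $x_i^d$ ($i\le m-k$) and $x^w$ ($w\in\Omega'$, $w\ge_{\lex}w'_s$) are pairwise distinct, because $w_i\le d-1$ for $i\le m-k$. Hence $\dim W=(m-k)+s=r$ exactly.
\item The $P_d(x_i)$ confine $Z(W)$ to a union of $k$-planes, so $\dim Z(W)\le k$.
\item The inclusion you call easy, $\Lambda_s\subseteq Z(W)(\Fq)$, gives $|Z(W)(\Fq)|\ge|\Lambda_s|=\fA$. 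For the equality $|\Lambda_s|=\fA$, sort $b<_{\lex}w'_s$ by the first index $j$ at which $b$ differs from $w'_s$. This contributes $\al_jd^{m-k-j}q^k$ or $\al_jq^{m-j}$, according as $j\le m-k$ or $j>m-k$.
\end{itemize}
So you can delete the counting plan and both fallbacks. Also delete the remark about shrinking $W$: it is unnecessary here, and in general unsafe, since discarding some $P_d(x_i)$ can destroy $\dim Z(W)\le k$.

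Your assertion that the reverse inclusion fails pointwise is false; in fact $Z(W)(\Fq)=\Lambda_s$ exactly. For $b=w'_s$ the generator $P_{w'_s}$ is nonzero at $b$. If $b\in\B$ and $b>_{\lex}w'_s$, let $p$ be the first index with $b_p\ne\al_p$, so $b_p>\al_p$. If $\al_{p+1}=\dots=\al_m=0$, then $w'_s\preccurlyeq b$. Otherwise set $w=(\al_1,\dots,\al_{p-1},\al_p+1,0,\dots,0)$. Then $\deg w\le\deg w'_s\le d$ and $w_p\le b_p$, so $w\in\Omega'$; moreover $w>_{\lex}w'_s$ and $w\preccurlyeq b$. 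Your ``large coordinates only at the end'' heuristic overlooks that such a $b$ must agree with $w'_s$ before position $p$.

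Compared with the paper: the paper builds $W$ from the blocks $W_i=g_1\cdots g_ix_i\Fq[x_i,\dots,x_m]_{\le d-1-\sum_{j\le i}\al_j}$, with the normalization $a_d=0$ and only $f_{l+1},\dots,f_{m-k}$. It proves the exact equality $Z(W)(\Fq)=\bigcup_iY_i$ by a case analysis (Lemma \ref{lem: V(W) has the right size}). It also needs the binomial identity of Lemma \ref{lem: r in terms of al} to conclude $\dim W=r$ (Lemma \ref{lem: dim eq r}).

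Your footprint-style $W$ uses one generator for each of the $s$ lex-largest elements of $\Omega'$, plus all $m-k$ polynomials $P_d(x_i)$. This makes $\dim W=r$ immediate and reduces the zero-set computation to the coordinatewise order on $\B$, so it is shorter and needs no identity for $r$. The paper's block form is tailored to its reuse in Section \ref{sec: projective conjecture}, where it is homogenized and the points at infinity are computed in Lemma \ref{lem: point at infinity}. Your $W$ also homogenizes (on $X_0=0$ each $P_w^h$ becomes $X^w$ or $0$), but you would have to redo that computation for it.
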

To show this proposition, we fix the parameters $d,m,k,q$ and $r$. Suppose $w'_{r-(m-k)}=(\al_1,\dots, \al_m)$.
We want to construct $W \subseteq A_{\leq d}(m,\Fq)$ such that:
\begin{enumerate}
\item $|Z(W)(\Fq)|=\fA$,
\item $\dim(Z(W)) \leq k$,
\item $\dim_{\Fq}(W)=r$.
\end{enumerate}
This will imply that $\fA$ is at least a lower bound for the quantity $\eA$. Our construction closely follows that of \cite[Section 3]{lin2026largest}. 

We enumerate elements in $\Fq$ by $\{a_1,a_2, \dots, a_q\}$. Reorder them so that $a_d=0$.

For $1 \leq i \leq m-k$ define
\[f_i=\prod_{k=1}^{d}(x_i-a_k).\]
In particular, notice that $f_i$ is a polynomial in $x_i$, and that $x_i \mid f_i$.

For $1 \leq j \leq m$ define
\[g_j=\prod_{k=1}^{\alpha_j}(x_j-a_k).\]
In particular, if $\al_j=0$ then $g_j=1$.

For $1 \leq i \leq m-1$, let 
\[
W_i
=g_1 \dots g_i x_i
\Fq[x_i \dots, x_m]_{\leq d-1-\sum_{k=1}^{i}\al_k}.
\] 
Notice that if $i<l$, then $W_i=x_i\Fq[x_i \dots, x_m]_{\leq d-1}$. 
For $i=m$, let
\[
W_m
=g_1 \dots g_m
\Fq[x_m]_{\leq d-\sum_{k=1}^m \al_k}.
\]
We let $W=W_1+W_2+ \dots +W_m + \langle f_{l+1}, \dots, f_{m-k}\rangle$. By convention, if $l>m-k-1$, then the vector space $\langle f_{l+1}, \dots, f_{m-k}\rangle=\{0\}$. We want to show the three properties mentioned at the beginning of this section.

\begin{lemma}\label{lem: V(W) small dim}
\[\dim(Z(W)) \leq k.\]
\end{lemma}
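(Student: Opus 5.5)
The plan is to show directly that every point of $Z(W)$ has its first $m-k$ coordinates confined to a finite set. Then $Z(W)$ is contained in a finite union of affine-linear subspaces $\{x_1=c_1,\dots,x_{m-k}=c_{m-k}\}\cong\A^k$, which gives $\dim Z(W)\le k$. Throughout, $l$ is the smallest index with $\al_l\neq 0$, and I use only the explicit generators of $W$ listed above.

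\emph{Step 1: coordinates before $l$ vanish.} For $i<l$ we have $W_i=x_i\Fq[x_i,\dots,x_m]_{\le d-1}$, so $x_i^d\in W$. Hence $x_i=0$ on $Z(W)$ for every $1\le i\le\min(l-1,m-k)$. Here I use $m-k\le m-1$, so each such $i$ is indeed an index for which $W_i$ is defined by the first formula. If $l>m-k$, this already pins down $x_1,\dots,x_{m-k}$ and we are done.

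\emph{Step 2: coordinates after $l$.} Assume now $l\le m-k$. For $l+1\le j\le m-k$ the polynomial $f_j=\prod_{t=1}^{d}(x_j-a_t)$ lies in $W$. So on $Z(W)$ each such $x_j$ takes one of at most $d$ values in $\Fq$.

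\emph{Step 3: the coordinate $x_l$.} This is the only delicate step, and I expect it to be the main obstacle, since it needs a case split on $\al_l$. I need a nonzero polynomial in $x_l$ alone inside $W$. Note $g_1\cdots g_{l-1}=1$ and $g_l=\prod_{t=1}^{\al_l}(x_l-a_t)$ with $1\le\al_l\le d$.
\begin{itemize}
\item If $\al_l\le d-1$ and $l\le m-1$, then $d-1-\sum_{t\le l}\al_t=d-1-\al_l\ge 0$. Hence $W_l$ contains $g_lx_l$, a nonzero univariate polynomial in $x_l$.
\item If $\al_l=d$, then $w'=(0,\dots,0,d,0,\dots,0)$. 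This is in $\CC(d,m,k)$ when $l\le m-k$, so it is not an element of $\Omega'$; it can only occur as the conventional element $w'_0$ when $r=m-k$, in which case $l=1$. In this case $\sum_t\al_t=d$, and $W_m=g_1\cdots g_m\Fq[x_m]_{\le 0}$ contains $g_l$, which is again univariate in $x_l$.
\item The remaining case $l=m$ cannot occur when $l\le m-k$ and $k\ge 1$. If $k=0$ and $l=m$, then $W_m$ contains $g_m$.
\end{itemize}
In every case, on $Z(W)$ the coordinate $x_l$ takes one of finitely many values.

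\emph{Conclusion.} Combining Steps 1–3, the map $Z(W)\to\A^{m-k}$ given by $(x_1,\dots,x_{m-k})$ has finite image. So $Z(W)$ is contained in finitely many fibres, each isomorphic to a closed subset of $\A^k$, and therefore $\dim Z(W)\le k$.
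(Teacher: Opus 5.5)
Your proof is correct and follows essentially the paper's route. The paper simply asserts $Z(W)\subseteq Z(x_1,\dots,x_{l-1},f_l,\dots,f_{m-k})$; your Step 3 spells out what it leaves implicit, namely that $f_l$ is not itself a generator of $W$ and that the constraint on $x_l$ comes from $x_lg_l\in W_l$ (which divides $f_l$ because $a_d=0$) or from $g_l=f_l\in W_m$ when $\al_l=d$. One nit: your remark $m-k\le m-1$ in Step 1 fails when $k=0$ and $l=m+1$ (i.e.\ $r=\binom{m+d}{d}$), but in that case $W_m=\Fq[x_m]_{\le d}$ contains $1$, so $Z(W)=\emptyset$ trivially.
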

\begin{proof}
We have $Z(W)\subseteq Z(x_1,\dots,x_{l-1}, f_l, \dots,f_{m-k})$, which is $k$ dimensional.
\end{proof}

\begin{lemma}\label{lem: V(W) has the right size}
For $l \leq i \leq m$, let $Y_i=\prod_{c=1}^m L_{i,c}$, where \begin{equation}\label{eq: lc}
   L_{i,c}=\begin{cases}
   \{a_d\} & \text{ if $c \leq i-1$}\\
   \{a_1, \dots, a_{\al_i}\} &\text{ if $c=i$}\\
   \{a_1, \dots, a_d\} & \text{ if $i+1 \leq c \leq m-k$}\\
   \{a_1, \dots, a_q\}  & \text{ if $c \geq i+1, c \geq m-k+1$}\\
    \end{cases}
\end{equation}
And let $Y=\bigcup_{i=l}^m Y_i$. Then, $Y=Z(W)(\Fq)$.
\end{lemma}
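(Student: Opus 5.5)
We prove $Y=Z(W)(\Fq)$ by showing both inclusions, with points $P=(p_1,\dots,p_m)\in\Fq^m$ sorted according to their first non-zero coordinate and to how the coordinates interact with the $g_j$.

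For $Y\subseteq Z(W)(\Fq)$, fix $i\ge l$ and $P\in Y_i$.

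First take a generator $g_1\cdots g_{i'}x_{i'}\,G$ of $W_{i'}$.
\begin{itemize}
\item If $i'\le i-1$, then $p_{i'}=a_d=0$, so the factor $x_{i'}$ kills it.
\item If $i'\ge i$, then $p_i\in\{a_1,\dots,a_{\al_i}\}$, so $g_i(P)=0$.
\end{itemize}
The same reasoning handles $W_m$: its generators carry the factor $g_1\cdots g_m$, so the case $i'=m\ge i$ applies and $g_i(P)=0$.

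Next take $f_c$ with $l+1\le c\le m-k$.
\begin{itemize}
\item If $c\le i-1$, then $p_c=0=a_d$ is a root of $f_c$.
\item If $c=i$, then $p_i\in\{a_1,\dots,a_{\al_i}\}\subseteq\{a_1,\dots,a_d\}$, using $\al_i\le d$ (indeed $\al_i\le d-1$ when $i\le m-k$, since $w'\in\B$).
\item If $i+1\le c\le m-k$, then $p_c\in\{a_1,\dots,a_d\}$ by definition of $L_{i,c}$.
\end{itemize}
So every generator of $W$ vanishes on $Y_i$.

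For the reverse inclusion, take $P\in Z(W)(\Fq)$ and induct along the coordinates. The key tool is the following observation. If $g_1\cdots g_{i}$ does not vanish at $P$ and $p_1=\dots=p_{i-1}=0$, then $W_i$ contains $g_1\cdots g_i\,x_i\,G(x_i,\dots,x_m)$ for every $G$ of degree $\le e_i:=d-1-\sum_{c\le i}\al_c$. Whenever $e_i\ge0$ we may take $G=1$, which forces $p_i=0$.

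Let $i$ be the first index with $g_i(P)=0$, i.e. $p_i\in\{a_1,\dots,a_{\al_i}\}$.

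We claim first that $p_c=0$ for all $c<i$. For $c<l$ we have $\al_c=0$ and $W_c=x_c\Fq[x_c,\dots]_{\le d-1}$, which gives $p_c=0$ directly. For $l\le c<i$ we use $e_c\ge0$. This holds because $\sum_{j\le c}\al_j\le d-1$ whenever some later coordinate, namely $\al_i$, is nonzero; the degree-$d$ case needs separate care and is handled by $W_m$.

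If no such $i$ exists, then $g_1\cdots g_m(P)\neq0$, and $W_m$ contains $g_1\cdots g_m\cdot 1$ when $\sum\al_j\le d$, which is always true. This is a contradiction, so $i$ exists. In the boundary case $\sum_{j\le c}\al_j=d$ with $c<m$, the element $w'$ has $t=c$, so $i\le c$ anyway.

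Having located $i$, it remains to show $p_c\in\{a_1,\dots,a_d\}$ for $i<c\le m-k$. The intended source is the $f_c$ in $W$. Here $i\ge l$, since for $c<l$ we get $p_c=0=a_d$ and $\al_c=0$ means $g_c=1$ cannot vanish. Hence $c>i\ge l$, so $c\ge l+1$ and $f_c\in W$, giving $p_c\in\{a_1,\dots,a_d\}$. The coordinates with $c>\max(i,m-k)$ are unconstrained. Therefore $P\in Y_i$.

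The main obstacle is the careful index bookkeeping in the reverse inclusion. One must guarantee that the exponent bounds $d-1-\sum\al$ are nonnegative at the right steps so that $x_c$ itself lies in $W_c$. One must also treat the edge cases $i=l$ (where $p_l\neq0$ is allowed but must lie in the first $\al_l$ values) and $i=m$, where $W_m$ has degree bound $d-\sum\al$ rather than $d-1-\sum\al$. Here the relevant facts are that $\al_l\ge1$, that $\al_j\le d-1$ for $j\le m-k$, and that $\deg w'\le d$. Beyond this, the inclusion $Y\subseteq Z(W)$ is routine.
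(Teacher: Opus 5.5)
Your proposal is correct and follows essentially the same route as the paper. The forward inclusion kills each generator of $W_{i'}$ by the factor $x_{i'}$ (if $i'<i$) or $g_i$ (if $i'\ge i$) and checks the $f_c$ coordinatewise; the reverse inclusion takes the first index $i$ with $g_i(P)=0$ (which exists since $g_1\cdots g_m\in W_m$), forces $p_c=0$ for $c<i$ via $x_c g_1\cdots g_c\in W_c$, and uses $f_c$ for $i<c\le m-k$. Your justification that $d-1-\sum_{j\le c}\alpha_j\ge 0$ for $c<i$ (because $\alpha_i\ge 1$) makes explicit a point the paper leaves implicit; the side remark that $t=c$ in the boundary case should read $t\le c$, though it is not needed for the argument.
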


    Notice that the $Y_i$ are pairwise disjoint, and $|Y_i|=\begin{cases}
    \al_id^{m-k-i}q^k & \text{if $i \leq m-k$}\\
\al_iq^{m-i}  & \text {if $i \geq m-k+1$}\end{cases}$. \\ 
Hence, Lemma \ref{lem: V(W) has the right size} implies that $|Z(W)(\Fq)|=\sum_i |Y_i|=\fA$.

\begin{proof}
    Notice that $$Y_i=\begin{cases}
        Z(x_1, \dots, x_{i-1}, g_i, f_{i+1}, \dots, f_{m-k})(\Fq)&\text{ if $i \leq m-k-1$}\\
        Z(x_1, \dots, x_{i-1},g_i)(\Fq) &\text{ if $i \geq m-k$}
    \end{cases}$$
\begin{enumerate}
    \item Show that $Y \subseteq Z(W)(\Fq)$: Since each point in $Y_i$ has $\Fq$-rational coordinates, it suffices to show that $Y_i \subseteq Z(W_j)$ for $1 \leq j \leq m$ and $Y_i \subseteq Z(f_j)$ for $l+1 \leq j \leq m-k$.
    \begin{itemize}
        \item Show that $Y_i \subseteq Z(f_j)$: Given $l+1 \leq j \leq m-k$, it suffices to show that $Y_{i,j} \subseteq \{a_1, \dots, a_d\}$. From Equation \ref{eq: lc}, this is true since $j \leq m-k$.
        

        \item Show that $Y_i \subseteq Z(W_j)$:
        \begin{enumerate}
            \item If $j \leq i-1$: then in particular $j \leq m-1$. Hence we have $L_{i,j}=\{a_d\}=\{0\}$. Since everything in $W_j$ is a multiple of $x_j$, every polynomial in $W_j$ vanishes on $Y_i$.
            \item If $j \geq i$: Then, everything in $W_j$ is a multiple of $g_i$. By Equation \ref{eq: lc}, $L_{i,i}=\{a_1, \dots, a_{\al_i}\}$ and $g_i=\prod_{s=1}^{\al_i}(x_i-a_s)$. Hence, $g_i$ vanishes on $Y_i$, and so $W_j$ vanishes on $Y_i$.
        \end{enumerate}
        This shows that $Y \subseteq Z(W)(\Fq)$.
    \end{itemize}
    \item Show that $Z(W)(\Fq) \subseteq Y$: Let $P=(y_1, \dots, y_m) \in \A^m(\Fq)$. Assume that $P \in Z(W)$. We want to show that $P \in Y$.

    Since $g_lg_{l+1}\cdots g_m\in W_m$, there is some $i\geq l$ such that $g_i$ vanishes on $P$. Let $i_0$ be the smallest such index. Since $g_{i_0}(y_{i_0})=0$, we see that $y_{i_0}\in\{a_1,\dots,a_{\al_{i_0}}\}$.

We claim that $y_j=a_d=0$ for every $j<i_0$. If $j<l$, then $x_j\in W_j$, so $x_j$ vanishes on $P$, and hence $y_j=0$. If $l\leq j<i_0$, then $g_lg_{l+1}\cdots g_jx_j\in W_j$. By the minimality of $i_0$, none of $g_l,\dots,g_j$ vanishes on $P$. Hence, $x_j$ must vanish on $P$, so $y_j=0=a_d$.

Finally, for $i_0+1\leq i\leq m-k$, since $f_i\in W$, we have $f_i(P)=0$, and therefore $y_i\in\{a_1,\dots,a_d\}$. For the remaining coordinates, we know $y_i\in\Fq$. Hence, $P\in Y_{i_0}\subseteq Y$.  
\end{enumerate}
This finishes the proof that $Y=Z(W)(\Fq)$, and gives us that $|Z(W)(\Fq)|=\fA$.
\end{proof}
\begin{lemma}\label{lem: dim eq r}
    We have that $\dim_{\Fq}(W)=r$.
\end{lemma}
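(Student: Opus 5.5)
The plan is to compute $\dim_{\Fq}(W)$ via leading monomials. We use the lexicographic monomial order on $A(m,\Fq)$ with $x_1>x_2>\cdots>x_m$. We exhibit a spanning set of $W$ whose leading monomials are pairwise distinct. This set is then linearly independent, and its size is $\dim W$. We then match that count with the expression for $r$ in Lemma~\ref{lem: r in terms of al}. A monomial $x_1^{\beta_1}\cdots x_m^{\beta_m}$ of degree $\le d$ is identified with the element $(\beta_1,\dots,\beta_m,d-\sum\beta_j)\in\Omega(d,m)$. Lex order on these monomials agrees with lex order on $\Omega(d,m)$ restricted to the first $m$ coordinates.

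\emph{Step 1 (bases of the $W_i$).} For $1\le i\le m-1$, the natural spanning set of $W_i$ is $\{g_1\cdots g_i\,x_i\,M\}$, where $M$ ranges over monomials in $x_i,\dots,x_m$ of degree $\le d-1-\sum_{j\le i}\alpha_j$. Each $g_j$ is monic in $x_j$ of degree $\alpha_j$, so the leading monomial of $g_1\cdots g_i x_i M$ is $x_1^{\alpha_1}\cdots x_i^{\alpha_i+1}M$. As $M$ varies, these run over exactly the monomials of total degree $\le d$ whose exponent vector agrees with $w'=(\alpha_1,\dots,\alpha_m)$ in positions $<i$ and exceeds $\alpha_i$ in position $i$. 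In other words, they are the elements of $\Omega(d,m)$ that are lex-larger than $w'$ with first disagreement at index $i$. For $W_m$ the leading monomials are $x_1^{\alpha_1}\cdots x_m^{\beta}$ with $\alpha_m\le\beta\le d-\sum_{j<m}\alpha_j$. These are $w'$ itself together with the elements first differing from $w'$ at index $m$. Taken together, the leading monomials from $W_1,\dots,W_m$ are pairwise distinct and biject with $\{w\in\Omega(d,m)\mid w\ge_{\lex}w'\}$. Their number is $1+\sum_{i=1}^m\binom{m-i+d-\sum_{j\le i}\alpha_j}{m-i+1}$ by \cite[Lemma 2.8]{singhal2025conjecture}, as used in Lemma~\ref{lem: r in terms of al}.

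\emph{Step 2 (adding the $f_j$).} For $l+1\le j\le m-k$, the polynomial $f_j$ has leading monomial $x_j^d$. Its exponent vector has a $0$ in position $l$ while $\alpha_l\ne0$, and all earlier coordinates of both vectors are $0$. Hence $x_j^d<_{\lex}w'$, so these monomials are new. They are also distinct for different $j$. This contributes $\max(m-k-l,0)$ further distinct leading monomials. Distinct leading monomials force linear independence, so
\[
\dim W=1+\sum_{i=1}^m\tbinom{m-i+d-\sum_{j\le i}\alpha_j}{m-i+1}+\max(m-k-l,0),
\]
which equals $r$ by Lemma~\ref{lem: r in terms of al}.

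The main obstacle is the bookkeeping in Step 1. We must check that the degree bounds in the definitions of $W_i$ and $W_m$ produce exactly the monomials of degree $\le d$ in each ``first disagreement at index $i$'' class, with no overlaps or omissions. We must also handle the edge cases. When $r=m-k$ we have $w'=(d,0,\dots,0)$ and $l=1$. When $r=\binom{m+d}{d}$ we have $w'=0$ and $l=m+1$; here $W_m$ consists of constants and $W=A_{\le d}(m,\Fq)$, which can be checked directly. When some $\alpha_j=0$, we have $g_j=1$. I would verify each of these separately against the formula of Lemma~\ref{lem: r in terms of al}.
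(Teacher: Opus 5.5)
Your proof is correct and takes essentially the same route as the paper. Both arguments show that the pieces $W_1,\dots,W_m$ and the $f_j$ have pairwise distinct lex leading monomials, so the sum is direct and its dimension is a count of monomials, which Lemma~\ref{lem: r in terms of al} identifies with $r$. The only difference is bookkeeping: you count the leading monomials through the bijection with $\{w\in\Omega(d,m)\mid w\geq_{\lex}w'\}$, while the paper computes each $\dim W_i$ directly as a binomial coefficient, and these counts agree term by term.
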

\begin{proof}
We want to show that $W=W_1+\dots+W_m+\langle f_{l+1},\dots,f_{m-k}\rangle$ is a direct sum. 

Equip the monomials with the lexicographic order $x_1>\dots>x_m$, and for a nonzero polynomial $F$, let $\operatorname{LM}(F)$ denote its leading monomial with respect to this order.

Suppose that $F_i\in W_i$ for $1\leq i\leq m$ and $u_{l+1},\dots,u_{m-k}\in\Fq$ satisfy
\[
F_1+\dots+F_m+\sum_{j=l+1}^{m-k}u_jf_j=0.
\]
We will show that all $F_i$ and $u_j$ are zero. For $1\leq i\leq m-1$, if $F_i\neq 0$, write $F_i=(x_i\prod_{j=1}^i g_j)h_i$ for some nonzero $h_i\in\Fq[x_i,\dots,x_m]_{\leq d-1-\sum_{j=1}^i\al_j}$. Since $\operatorname{LM}(g_j)=x_j^{\al_j}$, we have
\[
\operatorname{LM}(F_i)=x_i\Big(\prod_{j=1}^i x_j^{\al_j}\Big)\operatorname{LM}(h_i).
\]
Similarly, if $F_m\neq 0$, then $\operatorname{LM}(F_m)=(\prod_{j=1}^m x_j^{\al_j})\operatorname{LM}(h_m)$ for some nonzero $h_m\in\Fq[x_m]_{\leq d-\sum_{j=1}^m\al_j}$.

Suppose $i<j$ and both $F_i,F_j$ are nonzero. The exponent of $x_i$ in $\operatorname{LM}(F_i)$ is at least $\al_i+1$, whereas the exponent of $x_i$ in $\operatorname{LM}(F_j)$ is exactly $\al_i$, since the remaining factor in $F_j$ only involves $x_j,\dots,x_m$. Hence, $\operatorname{LM}(F_i)\neq\operatorname{LM}(F_j)$.

Next, for $l+1\leq j\leq m-k$, we have $\operatorname{LM}(f_j)=x_j^d$. We claim that this cannot equal $\operatorname{LM}(F_i)$ for any nonzero $F_i\in W_i$. 
\begin{itemize}
\item If $i<j$, then $\operatorname{LM}(F_i)$ has a positive exponent in $x_i$, whereas $x_j^d$ does not. 
\item If $i\geq j$, then $\operatorname{LM}(F_i)$ has a positive exponent in $x_l$, since $\al_l>0$, whereas $x_j^d$ has exponent zero in $x_l$ since $j>l$. 
\end{itemize}
Hence, $\operatorname{LM}(F_i)\neq\operatorname{LM}(f_j)$ for every $i$ and $j$. Moreover, the leading monomials $x_j^d$ of the different $f_j$ are distinct.

Therefore, if some $F_i$ or $u_j$ is nonzero, the largest leading monomial among the nonzero terms in the above relation occurs exactly once and cannot cancel. This is a contradiction. Hence, all $F_i$ and $u_j$ are zero, and therefore
\[
W=W_1\oplus\dots\oplus W_m\oplus\langle f_{l+1},\dots,f_{m-k}\rangle.
\]
Hence, $\dim(W)=\sum_{i=1}^m\dim(W_i)+\max(m-k-l,0)$. Notice that
\[
\dim(W_i)=\tbinom{m-i+d-\sum_{j=1}^i\al_j}{m-i+1}
\]
for $1\leq i\leq m-1$, and $\dim(W_m)=1+d-\sum_{j=1}^m\al_j$. Hence,
\[
\dim(W)=1+\sum_{i=1}^m\tbinom{m-i+d-\sum_{j=1}^i\al_j}{m-i+1}+\max(m-k-l,0).
\]
By Lemma \ref{lem: r in terms of al}, this is equal to $r$. Hence, $\dim(W)=r$.
\end{proof}
Since we are able to construct $W$ with dimension $r$, $\dim(Z(W))\leq k$ and $|Z(W)(\Fq)|=\fA$, this shows that the conjectured bound is achievable, and hence proves that $\eA \geq \fA$.

\section{Projective conjecture}\label{sec: projective conjecture}

Throughout this section, fix $d,m\geq 1$, $q\geq d+1$, $0\leq k\leq m-1$, and $m-k\leq r\leq \binom{m+d}{d}$.
Write $w'_{r-(m-k)}(d,m,k;q)=(\al_1,\ldots,\al_m)$,
let $l$ be the smallest index such that $\al_l\neq 0$, and set
\[
s=\min(m-l-1,k-1).
\]

Recall that for $W\subseteq S_d(m,\Fq)$, we denote its vanishing locus in $\PP^m$ by $V(W)$, and
\[
\eP
=
\max\Bigl\{
|V(W)(\Fq)|:
W\subseteq S_d(m,\Fq),\
\dim_{\Fq}W=r,\
\dim V(W)\leq k
\Bigr\}.
\]
Also recall that $\pi_j(q)=|\PP^j(\Fq)|=\frac{q^{j+1}-1}{q-1}$ for $j\geq 0$, while $\pi_j(q)=0$ for $j<0$. In Conjecture \ref{conj:projective-geometric}, we give the conjectured value of $\eP$ as
\[
\fP=\fA+\pi_s(q).
\]
In this section we show that this conjectured formula is a lower bound for $\eP$.

\begin{proposition}\label{prop: equal proj}
\[
\fP\leq \eP.
\]
\end{proposition}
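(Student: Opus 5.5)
The plan is to projectivize the affine construction of Section~\ref{sec: affine conjecture equality}, via the identification $\A^m\cong U(X_0)\subseteq\PP^m$, and then count separately the affine points and the points on the hyperplane at infinity $V(X_0)$.

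\textbf{Step 1: the construction.} Let $W^{\A}=W_1\oplus\dots\oplus W_m\oplus\langle f_{l+1},\dots,f_{m-k}\rangle\subseteq A_{\leq d}(m,\Fq)$ be the space built in Section~\ref{sec: affine conjecture equality} for $w'_{r-(m-k)}=(\al_1,\dots,\al_m)$. Let $W\subseteq S_d(m,\Fq)$ be its degree-$d$ homogenization, $F\mapsto X_0^dF(X_1/X_0,\dots,X_m/X_0)$. This map is injective and $\Fq$-linear, so Lemma~\ref{lem: dim eq r} gives $\dim_{\Fq}W=r$. On $U(X_0)$ we have $V(W)\cap U(X_0)=Z(W^{\A})$. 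Hence Lemma~\ref{lem: V(W) small dim} shows the affine part has dimension $\leq k$, and Lemma~\ref{lem: V(W) has the right size} shows it has exactly $\fA$ rational points.

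\textbf{Step 2: the part at infinity.} Setting $X_0=0$ in an element of $W$ leaves its top-degree form. I will determine these top forms and show that
\[
V(W)\cap V(X_0)=V\bigl(X_0,X_1,\dots,X_{\max(l,m-k)}\bigr),
\]
which is a linear space $\PP^{s'}$ with $s'=m-1-\max(l,m-k)=\min(m-l-1,k-1)=s$. The argument runs through the pieces of $W$ in order.
\begin{itemize}[label={}]
\item For $i<l$, the top forms of $W_i$ are $X_i\cdot S_{d-1}$ in the variables $X_i,\dots,X_m$. These contain $X_i^d$, which forces $X_i=0$.
\item For $i=l$, the top forms include $X_l^{\al_l}\cdot X_l\cdot X_l^{d-1-\al_l}=X_l^d$. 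When $l=m$, one uses $W_m$ with $g_m$ and the monomial $X_m^{d-\al_m}$ instead. Either way this forces $X_l=0$.
\item For $l<j\leq m-k$, the top form of $f_j$ is $X_j^d$, which forces $X_j=0$.
\item Every other generator ($W_i$ for $i>l$) has top form divisible by $X_l^{\al_l}$ with $\al_l\geq1$, so it vanishes once $X_l=0$.
\item Conversely, every top form of a generator is divisible by one of $X_1,\dots,X_{\max(l,m-k)}$, so the displayed linear space lies in $V(W)$.
\end{itemize}
Since $s\leq k-1$, the part at infinity has dimension at most $k-1$. Therefore $\dim V(W)\leq k$ and
\[
|V(W)(\Fq)|=\fA+\pi_s(q)=\fP.
\]

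\textbf{Step 3: edge cases and the main obstacle.} The edge case $r=\binom{m+d}{d}$ has $l=m+1$ and $s<0$, so $\pi_s=0$; there $W=S_d(m,\Fq)$ and $V(W)=\emptyset$, which is consistent. The case $l\geq m-k$, where no $f_j$ appear, is covered by the same computation.

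The main obstacle is the bookkeeping in Step~2. First, the top-degree part of a homogenized space is the space of top forms, and because each $W_i$ is the full space of polynomials of degree $\leq e$ in its variables, times a fixed factor, these top forms are the fixed leading factor times all degree-$e$ forms. Second, the degree bound $d-1-\sum_{j\leq l}\al_j\geq 0$ must hold at $i=l$ (and $d-\sum\al_j\geq0$ at $i=m$), so that the pure power $X_l^d$ is actually available. This holds because $\deg w'\leq d$; I would check it carefully, together with the boundary indices $l=m-k-1$ and $l=m-k$.
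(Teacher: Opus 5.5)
Your route is the paper's: homogenize the affine space from Section~\ref{sec: affine conjecture equality}, reuse its three lemmas for the part in $U(X_0)$, and show $V(W)\cap V(X_0)=V(X_0,\dots,X_{\max(l,m-k)})\cong\PP^{s}$. The one step that fails as written is your $i=l$ bullet. You claim $d-1-\sum_{j\le l}\al_j=d-1-\al_l\ge 0$ holds ``because $\deg w'\le d$'', but $\deg w'\le d$ only gives $\al_l\le d$.

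The case $\al_l=d$ genuinely occurs. For $r=m-k$, the convention $w'_0=(d,0,\dots,0)$ gives $l=1$ and $\al_1=d$; this already happens for $r=1$, $k=m-1$. For $l\ge m-k+1$, the box allows $\al_l$ up to $q-1\ge d$, so for instance $w'=(0,d,0)$ with $m=3$, $k=2$. When $\al_l=d$ and $l<m$, the space $W_l=g_lx_l\Fq[x_l,\dots,x_m]_{\le -1}$ is zero. It therefore supplies no $X_l^d$, and your argument does not force $X_l=0$ at infinity.

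The repair is the one the paper makes. If $\al_l=d$, then $\al_j=0$ for all $j\ne l$. Hence $W_m=g_1\cdots g_m\Fq[x_m]_{\le 0}=\langle g_l\rangle$, and the top form of $g_l=\prod_{c=1}^{d}(x_l-a_c)$ is $X_l^d$. Your separate $l=m$ sub-case is a special instance of this.

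With that case added, the rest of your Step~2 goes through:
\begin{itemize}
\item the pure powers $X_i^d$ for $i<l$ and $X_j^d$ from the $f_j$;
\item the divisibility of the remaining top forms by $X_l^{\al_l}$;
\item the converse inclusion.
\end{itemize}
Together these give $|V(W)(\Fq)|=\fA+\pi_s(q)$ with $\dim V(W)\le k$, as claimed.
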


To prove this proposition, we construct $\Wb\subseteq S_d(m,\Fq)$
such that:
\begin{itemize}
    \item $|V(\Wb)(\Fq)|=\fP$;
    \item $\dim V(\Wb)\leq k$;
    \item $\dim_{\Fq}(\Wb)=r$.
\end{itemize}

For a polynomial $f \in A_{\leq d}(m,\Fq)$, recall that the degree-$d$ homogenization of $f$ with respect to $X_0$ is \[f^h:=X_0^{d}f(\tfrac{X_1}{X_0}, \dots, \tfrac{X_m}{X_0})\]
so $f^h \in S_d(m,\Fq)$.

We take $\Wb$ to be the degree-$d$ homogenization of $W$ from Section \ref{sec: affine conjecture equality} with respect to $X_0$. That is, \\$\Wb=\langle f^h \mid f \in W \rangle$. More precisely, we take
\[F_i=\prod_{k=1}^{d}(X_i-a_kX_0), \qquad G_j=\prod_{k=1}^{\al_j}(X_j-a_kX_0).\]
For $1 \leq i \leq m-1$, let \[\Wb_i=G_1\dots G_iX_i\Fq[X_0,X_i \dots, X_m]_{d-1-\sum_{k=1}^{i}\al_k}.\] 
If $i<l$, then $\Wb_i=X_i\Fq[X_0,X_i \dots, X_m]_{d-1}$. For $i=m$, let
\[
\Wb_m=G_1 \dots G_m\Fq[X_0,X_m]_{d-\sum_{k=1}^m \al_k}.
\]
We let $\Wb=\Wb_1+\Wb_2+ \dots +\Wb_m + \langle F_{l+1}, \dots, F_{m-k}\rangle$. Notice that for $F \in \Wb$, we have $F(1,x_1, \dots, x_m)\in W$.

Further notice that $\dim_{\Fq}(\Wb)=\dim_{\Fq}(W)=r$, since degree-$d$ homogenization is a vector space isomorphism from $A_{\leq d}(m,\Fq)$ to $S_d(m,\Fq)$.

Moreover, $V(\Wb)\cap U(X_0)\cong Z(W)$. Hence, by Section~\ref{sec: affine conjecture equality}, $\dim(V(\Wb)\cap U(X_0))\leq k$
and \\ $|(V(\Wb)\cap U(X_0))(\Fq)|=\fA$. It remains to determine $V(\Wb)$ on the hyperplane $V(X_0)$.

\begin{lemma}\label{lem: point at infinity}
Let $t=\max(l,m-k)$. Then,
    \[V(\Wb) \cap V(X_0)= \bigcap_{j=0}^t V(X_j).\]
\end{lemma}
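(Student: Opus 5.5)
We restrict every generator of $\Wb$ to the hyperplane $X_0=0$ and show that the restricted system cuts out exactly the coordinate subspace $X_1=\dots=X_t=0$ inside $V(X_0)$. Setting $X_0=0$ gives
\[
F_i\big|_{X_0=0}=X_i^d,\qquad G_j\big|_{X_0=0}=X_j^{\al_j}.
\]
So the restriction of $\Wb_i$ is $X_1^{\al_1}\cdots X_i^{\al_i}X_i\,\Fq[X_i,\dots,X_m]_{d-1-\sum_{k\le i}\al_k}$ for $i\le m-1$. The restriction of $\Wb_m$ is spanned by $X_1^{\al_1}\cdots X_m^{\al_m}X_m^{d-\sum\al_k}$, or is zero when $\sum\al_k<d$, since the other monomials carry a factor of $X_0$. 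Since $\al_j=0$ for $j<l$, the only factors that actually occur are $X_l,\dots,X_m$. The two inclusions are then proved separately.

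For $\supseteq$, take a point with $X_0=X_1=\dots=X_t=0$. We must show that every restricted generator vanishes there. The restricted $F_i$ with $l+1\le i\le m-k\le t$ is $X_i^d$, which vanishes. If $i\ge l$, each restricted element of $\Wb_i$ is divisible by $X_l^{\al_l}$ with $\al_l\ge1$ and $l\le t$, so it vanishes. If $i<l$, each element of $\Wb_i$ is divisible by $X_i$ with $i<l\le t$. The restriction of $\Wb_m$ is divisible by $X_l^{\al_l}$, so it vanishes as well (the case $l=m+1$, i.e. $r=\binom{m+d}{d}$, is treated separately and is trivial).

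For $\subseteq$, take $P\in V(\Wb)\cap V(X_0)$ and show inductively that $X_1=\dots=X_t=0$ at $P$. For $i<l$, the space $\Wb_i$ contains $X_iX_i^{d-1}$ (take $X_i^{d-1}$ in the polynomial factor), so $X_i^d$ vanishes and $X_i(P)=0$. For $l+1\le i\le m-k$, the generator $F_i$ gives $X_i^d(P)=0$.

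It remains to handle $X_l$, and, when $l>m-k$, the coordinates up to $t=l$; note that $t=\max(l,m-k)$, so only $X_l$ is still missing. The natural choice is $\Wb_l$ with polynomial factor $X_l^{\,d-1-\al_l}$, which restricts to $X_l^{d}$ (using $\al_j=0$ for $j<l$) and so gives $X_l(P)=0$. This requires $d-1-\al_l\ge0$, i.e. $\al_l\le d-1$, which holds when $l\le m-k$ by the box condition. When $l>m-k$ we may instead have $\al_l=d$, with $w'=(0,\dots,0,d,0,\dots,0)$. In that case $\Wb_l=0$ and the generators come from $\Wb_m$ via $G_l=\prod(X_l-a_kX_0)$, which restricts to $X_l^d$; if $l=m$ the same holds with the $\Wb_m$ polynomial factor being a constant. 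This degenerate case $\al_l=d$, together with the bookkeeping of which polynomial factor to pick in each $\Wb_i$, is the step I expect to need care: one must check that some element of $\Wb$ restricts to a pure power of $X_l$ in every configuration of $(l,\al_l,m-k)$.

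Finally, the coordinates $X_{t+1},\dots,X_m$ are unconstrained. Every restricted generator is divisible by one of $X_1,\dots,X_t$, as already used in the $\supseteq$ argument, so no further conditions arise and the two sets are equal.
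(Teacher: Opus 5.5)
Your proof is correct and is essentially the paper's argument. You restrict to $X_0=0$, then kill $X_i$ for $i<l$ via $X_i^d\in\Wb_i|_{X_0=0}$, kill $X_l$ via $\Wb_l$ when $\al_l<d$ and via $\Wb_m=\langle G_l\rangle$ when $\al_l=d$, and kill $X_{l+1},\dots,X_{m-k}$ via the $F_j$; the reverse inclusion comes from divisibility by $X_i$, by $G_l$, or by $X_j^d$. One harmless slip: the restriction of $\Wb_m$ is never zero, since $X_m^{d-\sum\al_k}$ contributes $X_1^{\al_1}\cdots X_m^{\al_m}X_m^{d-\sum\al_k}$ even when $\sum\al_k<d$, but you never use that claim.
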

\begin{proof}
Setting $X_0=0$, we see that
\[F_i(X_0=0)=X_i^d, \qquad G_j(X_0=0)=X_j^{\al_j}.\]
For any $P \in V(\Wb) \cap V(X_0)$, say $P=[0:Y_1: \dots:Y_m]$. We consider the following cases:
\begin{itemize}
    \item If $1\leq i<l$, then $\Wb_i|_{X_0=0}$ contains $X_i^d$. Since $\Wb_i$ vanishes on $P$, we must have $Y_i=0$.
    \item If $i=l$, then if $\al_l<d$, $\Wb_l|_{X_0=0}$ contains $X_l^d$, so $Y_l=0$. If $\al_l=d$, then $\al_j=0$ for every $j\neq l$, and $\Wb_m|_{X_0=0}$ contains $X_l^d$, so again $Y_l=0$.
    \item If $l<m-k$, then for $l<j\leq m-k$, since $F_j$ vanishes on $P$, we must have $Y_j=0$.
\end{itemize}
Hence, $Y_i=0$ for $1\leq i\leq t=\max(l,m-k)$, and therefore $P \in \bigcap_{j=0}^t V(X_j)$.


Conversely, if $P \in V(X_0)$ has $Y_i=0$ for $1 \leq i \leq t$, then $\Wb_i$ vanishes on $P$ for $1 \leq i \leq l$ and $F_j$ vanishes on $P$ for $l+1\leq j\leq m-k$. 
For $l+1 \leq i \leq m$, notice that $G_l$ divides all polynomials in $\Wb_i$, so $\Wb_i$ vanishes on $P$. 
This shows that $\bigcap_{j=0}^t V(X_j) \subseteq V(\Wb) \cap V(X_0)$, which proves the desired equality.
\end{proof}

From Lemma \ref{lem: point at infinity}, we see that $\dim(V(\Wb) \cap V(X_0)) \leq k-1$ and $|(V(\Wb) \cap V(X_0))(\Fq)|=\pi_s(q)$. Hence, the $\Wb$ we constructed satisfies $\dim_{\Fq}(\Wb)=r, \dim(V(\Wb)) \leq k$, and 
\[
|V(\Wb)(\Fq)|
=|Z(W)(\Fq)| + |\PP^s(\Fq)|
=\fA+\pi_s(q).
\] 
Hence, we show that the equality can be achieved, which shows that $\fP$ is a lower bound for $\eP$.

\section{$m-2$ dimensional projective conjecture implies B-D-G}\label{sec: m-2 proj implies B-D-G}
In this section, we show that Conjecture \ref{conj:projective-geometric} in $k=m-2$ implies Conjecture \ref{Conj: complete GDC}. That is, we want to show the following statement.
\begin{customprop}{\ref{prop: projective m-2 implies BDG}}
Suppose $d\geq 1, m \geq 2,2\leq r\leq \binom{m+d}{d}$ and $q\geq d+1$. 
If we know that for each $1\leq d'\leq d$ satisfying $r\leq \binom{m+d'}{d'}$,
\[
e^{\PP}_r(d',m,m-2;q) 
=f^{\PP}_r(d',m,m-2;q),
\]
then we have that 
\[
e^{\PP}_r(d,m,m-1;q) 
=f^{\PP}_r(d,m,m-1;q).
\]
\end{customprop}
We follow the strategy of \cite[Theorem 1.5]{lin2026largest}.
We first recall several results that will be used in the proof.

\begin{lemma}\cite[Lemma 38]{singhal2025conjecture}\label{lemma: d-c}
Given $m\geq 1$, $1\leq c\leq d-1$ and $1\leq r\leq \binom{m+d-c}{d-c}$, we have
$$H_r(d-c,m;q)+cq^{m-1}\leq H_r(d,m;q).$$
\end{lemma}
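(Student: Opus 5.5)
The plan is to prove the inequality directly, and in fact to show it is an equality, by comparing the lexicographic orderings of $\Omega(d-c,m)$ and $\Omega(d,m)$. The natural tool is the shift map
\[
\varphi:\Omega(d-c,m)\to\Omega(d,m),\qquad (\beta_1,\beta_2,\dots,\beta_{m+1})\mapsto(\beta_1+c,\beta_2,\dots,\beta_{m+1}).
\]
Coordinate sums go from $d-c$ to $d$, so $\varphi$ is well defined. It is injective. It also preserves the lexicographic order, since it adds the same constant to the first coordinate of every element.

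First I would identify the image of $\varphi$. It is exactly $\{\beta\in\Omega(d,m) : \beta_1\geq c\}$. This set is an initial segment of $\Omega(d,m)$ for the lexicographically decreasing order: any element with $\beta_1\geq c$ is lexicographically larger than any element with $\beta_1<c$, because the two already differ in the first coordinate. Its cardinality is $|\Omega(d-c,m)|=\binom{m+d-c}{d-c}$.

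Next, fix $1\leq r\leq\binom{m+d-c}{d-c}$. Because $\varphi$ is an order-preserving bijection onto the top $\binom{m+d-c}{d-c}$ elements of $\Omega(d,m)$, the $r$-th largest element of $\Omega(d-c,m)$ is sent to the $r$-th largest element of $\Omega(d,m)$. That is,
\[
w_r(d,m)=\varphi\bigl(w_r(d-c,m)\bigr).
\]
Write $w_r(d-c,m)=(\beta_1,\dots,\beta_{m+1})$. Then $w_r(d,m)=(\beta_1+c,\beta_2,\dots,\beta_{m+1})$, and substituting into the definition of $H_r$ gives
\[
H_r(d,m;q)=(\beta_1+c)q^{m-1}+\sum_{i=2}^m\beta_iq^{m-i}=H_r(d-c,m;q)+cq^{m-1}.
\]
This gives the claimed inequality, with equality.

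There is no real obstacle here. The only point needing care is the initial-segment claim, namely that lexicographic comparison between $\beta$ with $\beta_1\geq c$ and $\beta'$ with $\beta'_1<c$ is settled at the first coordinate. Together with the hypothesis $r\leq\binom{m+d-c}{d-c}$, this guarantees that the $r$-th largest element of $\Omega(d,m)$ lies in the image of $\varphi$. The hypothesis $q\geq d+1$ is not needed for this combinatorial identity.
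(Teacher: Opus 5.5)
Your proof is correct. Your map $\varphi$ is a lex-order-preserving bijection from $\Omega(d-c,m)$ onto the top segment $\{\beta\in\Omega(d,m):\beta_1\geq c\}$, which has size $\binom{m+d-c}{d-c}$. So you get $w_r(d,m)=\varphi(w_r(d-c,m))$, and with it the identity
$H_r(d,m;q)=H_r(d-c,m;q)+cq^{m-1}$ for every $1\leq r\leq\binom{m+d-c}{d-c}$. This is stronger than the stated inequality.

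The paper gives no proof of this lemma; it imports it from \cite[Lemma 38]{singhal2025conjecture}. So there is no internal argument to compare your route with. Your shift-map argument is a short, self-contained replacement for that citation.

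It also shows the inequality is always an equality. As a result, the two applications in the proof of Proposition~\ref{prop: projective m-2 implies BDG} lose no slack at that step. Those applications are $c=1$ when $\gcd(W)$ has an $\Fq$-linear factor, and $c=e$ when it does not.

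You are right that no hypothesis on $q$ is needed, because $H_r$ is a purely combinatorial quantity evaluated at $q$. The restriction $c\leq d-1$ is not needed either: for $c=d$, the only admissible $r$ is $1$, and both sides equal $dq^{m-1}$.
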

\begin{lemma}
\cite[Lemma 2.9]{beelen2018maximum}\label{lem: linear factor}
Suppose $q \geq d$ and $\dim_{\Fq}(W)=r$. If there exists a linear form $L$ defined over $\Fq$ such that $L \mid F$ for any $F \in W$, then we have
$$|V(W)(\Fq)| \leq H_r(d-1,m;q)+\pi_{m-1}(q).$$
\end{lemma}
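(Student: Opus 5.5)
Since every $F\in W$ is divisible by $L$, I would factor out $L$ and count points on and off the hyperplane $V(L)$ separately. Off $V(L)$ the count reduces to the affine problem of degree $d-1$, which is settled by Theorem~\ref{thm: affine BDG} (Heijnen--Pellikaan).

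\textbf{Step 1: factor out $L$.} Since $L$ divides every element of $W$ and $S(m,\Fq)$ is a domain, the map $S_{d-1}(m,\Fq)\to S_d(m,\Fq)$, $G\mapsto LG$, is injective. Hence $W=L\cdot W'$ for a unique subspace $W'\subseteq S_{d-1}(m,\Fq)$ with $\dim_{\Fq}W'=r$. In particular $r\le \binom{m+d-1}{d-1}$, so $H_r(d-1,m;q)$ is defined. Set-theoretically we get $V(W)=V(L)\cup V(W')$.

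\textbf{Step 2: split the rational points.} Since $L$ is defined over $\Fq$, after an $\Fq$-rational linear change of coordinates we may take $L=X_0$. Then
\[
|V(W)(\Fq)| = |V(X_0)(\Fq)| + |(V(W')\cap U(X_0))(\Fq)| = \pi_{m-1}(q) + |(V(W')\cap U(X_0))(\Fq)|.
\]

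\textbf{Step 3: the affine part.} Dehomogenizing with respect to $X_0$, i.e.\ $G\mapsto G(1,x_1,\dots,x_m)$, is a vector-space isomorphism $S_{d-1}(m,\Fq)\to A_{\le d-1}(m,\Fq)$; it is the inverse of the degree-$(d-1)$ homogenization used in Section~\ref{sec: projective conjecture}. So $W'$ maps to an $r$-dimensional subspace $W'^{\A}\subseteq A_{\le d-1}(m,\Fq)$, and $V(W')\cap U(X_0)\cong Z(W'^{\A})$.

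If $d\ge 2$, then $d-1\ge 1$ and $q\ge d=(d-1)+1$, so Theorem~\ref{thm: affine BDG} applies and gives $|Z(W'^{\A})(\Fq)|\le e^{\A}_r(d-1,m;q)=H_r(d-1,m;q)$. Adding $\pi_{m-1}(q)$ yields the claimed bound.

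\textbf{Step 4: the degenerate case $d=1$.} Here $W'$ consists of constants, so $r=1$ and $W=\langle L\rangle$. Then $V(W)=V(L)$ has exactly $\pi_{m-1}(q)$ rational points. Moreover $\Omega(0,m)=\{(0,\dots,0)\}$, so $H_1(0,m;q)=0$ and the bound holds with equality.

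The only point needing care is the degree bookkeeping in Step 3: the dehomogenized space must have degree at most $d-1$, and the hypothesis $q\ge d$ is exactly the condition $q\ge (d-1)+1$ required to invoke the Heijnen--Pellikaan theorem in degree $d-1$. Beyond that, I expect no real obstacle.
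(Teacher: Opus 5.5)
Your proof is correct and follows the standard argument behind \cite[Lemma 2.9]{beelen2018maximum}, which the paper cites without reproducing: write $W=LW'$ with $\dim W'=r$, count the $\pi_{m-1}(q)$ points of $V(L)$, and bound the remaining points by Heijnen--Pellikaan in degree $d-1$, since they form the affine zero set of the dehomogenized $r$-dimensional space $W'\subseteq S_{d-1}(m,\Fq)$; the hypothesis $q\geq d$ is exactly what that step needs. Treating $d=1$ separately (there $W=\langle L\rangle$ and $H_1(0,m;q)=0$) is a good precaution, because Theorem~\ref{thm: affine BDG} is only stated for degree at least $1$.
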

\begin{lemma}
\cite[Theorem 1.2]{homma2013elementary}\label{lem: no linear divisor hypersurface}
For $d\geq 2$, consider $G\in S_d(m,\Fq)$, $G\neq 0$. Assume that $G$ has no linear factors over $\Fq$. Then, we have
$$|V(G)(\Fq)| \leq (d-1)q^{m-1}+dq^{m-2}+\pi_{m-3}(q).$$
\end{lemma}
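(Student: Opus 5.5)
This lemma is quoted from \cite[Theorem 1.2]{homma2013elementary}, and in the paper it is used as a black box. If I were to reprove it, I would argue by induction on $m$, slicing by pencils of $\Fq$-hyperplanes. At each stage I would use Serre's bound $|V(F)(\Fq)|\le dq^{m-1}+\pi_{m-2}(q)$, stated in the introduction, as a fallback for slices that acquire a linear factor. Write $N=|V(G)(\Fq)|$. If $N=0$ there is nothing to prove.

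\emph{Base case $m=2$.} Here the bound reads $(d-1)q+d$. Pick $P\in V(G)(\Fq)$ and partition the remaining $\Fq$-points of $\PP^2$ by the $q+1$ $\Fq$-lines through $P$. Since $G$ has no $\Fq$-linear factor, no such line $\ell$ is a component of the curve. Hence $G|_\ell$ is a nonzero binary form of degree $d$ vanishing at $P$, so $\ell$ contains at most $d-1$ further points of $V(G)(\Fq)$. Summing over the lines gives $N\le 1+(q+1)(d-1)=(d-1)q+d$, as required.

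\emph{Inductive step, $m\ge 3$.} I would fix an $\Fq$-rational codimension-two linear subspace $\Lambda\cong\PP^{m-2}$ and the $q+1$ $\Fq$-hyperplanes $H\supset\Lambda$. These cover $\PP^m(\Fq)$, and any two of them meet exactly in $\Lambda$. Therefore
\[
N=|V(G)\cap\Lambda(\Fq)|+\sum_{H\supset\Lambda}\bigl(|V(G|_H)(\Fq)|-|V(G)\cap\Lambda(\Fq)|\bigr).
\]
Each $G|_H$ is a nonzero degree-$d$ form on $H\cong\PP^{m-1}$, because $H$ is not a component of $V(G)$. I call $H$ \emph{good} if $G|_H$ has no $\Fq$-linear factor; for good $H$ the induction hypothesis gives at most $(d-1)q^{m-2}+dq^{m-3}+\pi_{m-4}(q)$ points. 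For \emph{bad} $H$ I would use Serre's bound $dq^{m-2}+\pi_{m-3}(q)$ in $H$. If every $H$ in the pencil is good, a direct computation should produce exactly $(d-1)q^{m-1}+dq^{m-2}+\pi_{m-3}(q)$. This uses the identity $(q+1)\pi_{m-4}(q)-q\pi_{m-4}(q)+\dots$ together with a lower bound on $|V(G)\cap\Lambda(\Fq)|$ coming from the overlap terms. To get that lower bound I would choose $\Lambda$ inside $V(G)$ where possible, or alternatively choose $\Lambda$ to maximise $|V(G)\cap\Lambda|$.

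\emph{Main obstacle: bad hyperplanes.} The key step, and the one I expect to be hardest, is controlling the bad hyperplanes. A bad $H$ contains an $\Fq$-linear space $M\cong\PP^{m-2}$ lying in $V(G)$. My first attempt would be to choose $\Lambda$ to be such an $M$, if one exists, so that the badness is absorbed into $\Lambda$ itself; every other $H\supset M$ then contributes at most $(d-1)$ times as much new stuff off $M$. This runs the base-case argument one dimension up: $G$ restricted to each $H\supset M$ is divisible by the linear form of $M$ in $H$, and the cofactor has degree $d-1$. If no such $M$ exists, then every $H$ through a generic $\Lambda$ is good and the clean count applies.

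I expect the delicate point to be the bookkeeping when the cofactor of degree $d-1$ itself has further linear factors. I would handle this by Lemma~\ref{lem: linear factor}-style counting inside $H$, giving at most $H_1(d-1,m-1;q)+\pi_{m-2}(q)$ points. I would then check that the total still fits under the target bound, using $q\ge d$.
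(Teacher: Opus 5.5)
The paper gives no proof of this lemma; it only cites Homma--Kim. Judged on its own, your argument is correct in two places. The base case $m=2$ is correct. So is the case where $V(G)$ contains an $\Fq$-rational $(m-2)$-plane $M$: with $\Lambda=M$, each of the $q+1$ hyperplanes $H\supset M$ has $G|_H=\lambda G'_H$ with $G'_H\neq 0$ of degree $d-1$. The affine bound on $H\setminus M\cong\A^{m-1}$ then gives $N\le\pi_{m-2}(q)+(q+1)(d-1)q^{m-2}$, which is exactly the target. This uses nothing about further linear factors of $G'_H$, so the ``delicate bookkeeping'' in your last paragraph and the appeal to Lemma~\ref{lem: linear factor} are unnecessary.

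The gap is the complementary case, where $V(G)$ contains no $\Fq$-rational $(m-2)$-plane. Then every $\Fq$-hyperplane is good, so bad hyperplanes are not the real obstacle, but a single pencil does not close the induction. Write $B_j:=(d-1)q^{j-1}+dq^{j-2}+\pi_{j-3}(q)$. A pencil of good hyperplanes through $\Lambda$ only gives
\[
N\le (q+1)B_{m-1}-q\,|V(G)\cap\Lambda(\Fq)|
=B_m+(d-1)q^{m-2}+dq^{m-3}+\pi_{m-4}(q)-1-q\,|V(G)\cap\Lambda(\Fq)|.
\]
So you need $|V(G)\cap\Lambda(\Fq)|$ of size roughly $(d-1)q^{m-3}$. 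For $m=3$, this means $\Lambda$ must be a line meeting $V(G)$ in $d$ rational points. A generic $\Lambda$ gives nothing like this, so your claim that the all-good pencil ``should produce exactly'' the bound is false, and ``the clean count applies'' does not finish the proof.

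The missing ingredient is averaging. Double count incidences between $V(G)(\Fq)$ and all $\pi_m(q)$ hyperplanes, each of which satisfies the induction hypothesis. This gives $\pi_{m-1}(q)N\le\pi_m(q)B_{m-1}$, hence $N\le B_m-1+B_{m-1}/\pi_{m-1}(q)$. Since $\pi_{m-1}(q)-B_{m-1}=q^{m-3}(q-d+1)(q+1)$, this yields $N\le B_m$ for $q\ge d-1$. For $q\le d-2$ one has $B_m\ge\pi_m(q)$, so there is nothing to prove. Your alternative of choosing $\Lambda$ to maximise $|V(G)\cap\Lambda(\Fq)|$ can also be made to work. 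It needs the maximum bounded below by the average $N(q^{m-1}-1)/(q^{m+1}-1)$ over all $(m-2)$-planes, which gives precisely the same inequality.
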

\begin{lemma}\cite[Lemma 5.1]{lin2026largest}\label{Lem: 2 coprime poly}
Let $W \subseteq S_d(m, \Fq)$. If $q \geq d+1$ and $\gcd(W)=1$, then there is a pair of coprime polynomials in $W$.
\end{lemma}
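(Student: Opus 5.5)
The plan is to fix one nonzero element of $W$, factor it, and then pick a second element of $W$ that avoids every irreducible factor of the first. Avoiding finitely many factors will reduce to a covering count for vector spaces over $\Fq$.

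First, dispose of the degenerate cases. If $d=0$, any nonzero constant in $W$ is coprime to every polynomial, so we may assume $d\geq 1$. Then $W\neq 0$, since otherwise $\gcd(W)$ would not be $1$. Also $\dim_{\Fq}W\geq 2$: if $W=\langle F\rangle$, then $\gcd(W)=F$ has degree $d\geq 1$, a contradiction.

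Next, fix a nonzero $F\in W$ and factor it over $\Fq$ as $F=c\,P_1^{e_1}\cdots P_s^{e_s}$, with $P_i$ pairwise non-associate irreducible homogeneous polynomials. Since $\sum e_i\deg P_i=d$, we have $s\leq d$. For each $i$ set $W_i:=\{G\in W : P_i\mid G\}$. This is an $\Fq$-subspace of $W$, because divisibility by $P_i$ is preserved under $\Fq$-linear combinations. It is a proper subspace: if $W_i=W$, then $P_i$ would divide every element of $W$, hence $P_i\mid\gcd(W)=1$, which is impossible.

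The key step is the covering count: a vector space $V$ over $\Fq$ is never the union of $n\leq q$ proper subspaces. To prove it, suppose $V=\bigcup_{i=1}^n V_i$ with each $V_i$ proper and $n\leq q$. Discarding redundant subspaces, pick $u\in V_1\setminus\bigcup_{i\geq 2}V_i$ and $v\notin V_1$. The $q$ vectors $v+\lambda u$ with $\lambda\in\Fq$ all lie outside $V_1$, and each $V_i$ with $i\geq 2$ contains at most one of them, since two would put $u$ in $V_i$. So these $q$ vectors must be covered by $V_2,\dots,V_n$, which needs $n-1\geq q$, a contradiction. Applying this with $n=s\leq d\leq q-1$ gives $G\in W$ with $G\notin W_i$ for every $i$, that is, no $P_i$ divides $G$.

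Finally, conclude coprimality. Any common factor of $F$ and $G$ over $\Fq$ would involve some $P_i$, so $\gcd_{\Fq}(F,G)=1$. Coprimality does not change on passing to $\kk=\fqb$: the gcd over $\kk$ of two polynomials with coefficients in $\Fq$ can be computed inside $\Fq[X_0,\dots,X_m]$, since $\Fq[X_0,\dots,X_m]$ is a UFD and the gcd is invariant under field extension. Hence $F,G\in W$ form a coprime pair.

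The only genuinely nontrivial step is the covering count. This is where the hypothesis $q\geq d+1$ enters, through the bound $s\leq d<q$.
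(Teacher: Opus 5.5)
Your proof is correct. The paper gives no proof of this lemma to compare against: it is quoted from \cite{lin2026largest}. Your argument is the natural one. Fix a nonzero $F\in W$. The at most $d$ subspaces $W_i=\{G\in W: P_i\mid G\}$ are proper because $\gcd(W)=1$, they cannot cover $W$, and any $G$ lying outside all of them is coprime to $F$.

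\textbf{On the covering step.} A cruder count already suffices and is exactly where $q\geq d+1$ enters: $|\bigcup_i W_i|\le s\,q^{\dim W-1}<q^{\dim W}$, since $s\le d<q$. Your sharper lemma, that no $n\le q$ proper subspaces cover an $\Fq$-space, buys slightly more: it shows the statement holds under the weaker hypothesis $q\geq d$. (Your vectors $v+\lambda u$ are distinct because $u\notin V_2\ni 0$ forces $u\neq 0$; this is implicit but worth stating.)

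\textbf{On passing to $\kk$.} Your justification essentially restates the claim. A clean way to see it is linear algebra. Nonzero $F,G\in S_d(m,\Fq)$ share a nonconstant factor if and only if $AF=BG$ has a solution $(A,B)\neq(0,0)$ with $A,B\in S_{d-1}(m,\Fq)$. This is solvability of a linear system over $\Fq$, which is unchanged by extending scalars. In any case, the paper only uses the lemma to obtain a length-$2$ regular sequence in $W$, and coprimality over $\Fq$ already gives that.
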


\begin{proof}[Proof of Proposition \ref{prop: projective m-2 implies BDG}]
Let $W \subseteq S_d(m,\Fq)$ such that $\dim_{\Fq}(W)=r$. Let $g=\gcd(W)$, and let $e=\deg(g)$. Write $w_r(d,m)=(\beta_1,\dots,\beta_{m+1})$, and let $l$ be the smallest index such that $\beta_l\neq 0$.
\begin{enumerate}
\item If $e=0$, then $\gcd(W)=1$. By Lemma \ref{Lem: 2 coprime poly}, we can find a regular sequence of length $2$ in $W$. Hence, $\dim(V(W)) \leq m-2$. By  Proposition \ref{prop: fA increase} and the assumption that $e^{\PP}_r(d,m,m-2;q) 
=f^{\PP}_r(d,m,m-2;q)$, we see that 
\begin{equation*}
    \begin{split}
    |V(W)(\Fq)| &\leq f^{\PP}_r(d,m,m-2;q)\\ 
    &= \fa_r(d,m,m-2;q)+\pi_{\min(m-l-1,m-3)}(q) \\
    & \leq   \fa_r(d,m,m-1;q)+\pi_{m-l-1}(q) \\
      &=f^{\PP}_r(d,m,m-1;q).
    \end{split}
\end{equation*}
   
\item If $e \geq 1$, then write $W=gW_1$, where $\gcd(W_1)=1$. Since $\dim_{\Fq}(W_1)=r$ and $W_1 \subseteq S_{d-e}(m,\Fq)$, we have $r \leq \binom{m+d-e}{m} \leq \binom{m+d-1}{m}$, so $l=1$.

We further break into two cases based on whether $g$ has a $\Fq$-linear factor or not.
\begin{itemize}
\item If $g$ has a $\Fq$-linear factor, then, by Lemma \ref{lem: linear factor} and Lemma \ref{lemma: d-c}, we have that
\begin{align*}
\begin{split}
    |V(W)(\Fq)| &\leq H_r(d-1,m;q)+\pi_{m-1}(q) \\
    &\leq H_r(d,m;q)-q^{m-1}+\pi_{m-1}(q)\\
    &=H_r(d,m;q)+\pi_{m-2}(q)=f^{\PP}_r(d,m,m-1;q).
\end{split}
\end{align*}
    
\item If $g$ does not have a $\Fq$-linear factor, then we have $V(W) \subseteq V(W_1) \cup V(g)$. By the assumption that $e^{\PP}_r(d-e,m,m-2;q) 
=f^{\PP}_r(d-e,m,m-2;q)$, we have
\begin{align*}
\begin{split}
|V(W_1)(\Fq)| &\leq f^{\PP}_r(d-e,m,m-2;q)\\
&\leq \fa_r(d-e,m,m-2;q)+\pi_{m-3}(q)\\
&\leq \fa_r(d-e,m,m-1;q)+\pi_{m-3}(q)\\
\end{split}
\end{align*}
Since $g$ has no $\Fq$-linear factor, we have $e\geq 2$. By Lemma \ref{lem: no linear divisor hypersurface} we have that 
\[
|V(g)(\Fq)|
\leq (e-1)q^{m-1}+eq^{m-2}+\pi_{m-3}(q).
\]
Hence we have
\begin{align*}
\begin{split}
    &|V(W)(\Fq)| \leq |V(W_1)(\Fq)|+|V(g)(\Fq)| \\
    & \leq \fa_r(d,m,m-1;q)-eq^{m-1}+\pi_{m-3}(q)+(e-1)q^{m-1}+eq^{m-2}+\pi_{m-3}(q) \\
    &\leq \fa_r(d,m,m-1;q)+(e-1)q^{m-2}+\pi_{m-2}(q)+\pi_{m-3}(q)-q^{m-1}.
\end{split}
\end{align*}
We have that 
\[
(e-1)q^{m-2}+\pi_{m-3}(q) 
\leq q^{m-2}(e+\frac{1}{q-1}) 
\leq  q^{m-2}\cdot q 
=q^{m-1}.
\]
Hence we have $|V(W)(\Fq)|\leq \fa_r(d,m,m-1;q)+\pi_{m-2}(q) $, as desired. \qedhere
\end{itemize}
\end{enumerate}
\end{proof}

\section{A special case of the projective conjecture}\label{sec: codim 2}

In this section we study Conjecture \ref{conj:projective-geometric} where $k=m-2$ and $r=m-k=2$. 

Suppose $W \subseteq S_d(m,\Fq)$ such that $\dim_{\Fq}(W)=2$ and $\dim(V(W))=m-2$. Let $F_1, F_2$ be a basis of $W$ over $\Fq$, so $W=\langle F_1,F_2 \rangle$.
Then, $( F_1,F_2 )$ is a regular sequence in $S(m,\Fq)$, and $V(W)$ is pure of codimension $2$.

Given  $W\subseteq S_d(m,\Fq)$ and a linear form $L\in S_1(m,\Fq)$, Beelen, Datta and Ghorpade in \cite{beelen2018maximum} define 
\begin{align*}
&t_W(L)=\dim(W\cap LS_{d-1}(m,\Fq)),
&
&t_W=\max\{t_W(L)\mid L\in S_1(m,\Fq)\}.
\end{align*}
Note that in our case, with $W=\langle F_1,F_2 \rangle$, we have $0\leq t_W\leq 1$. Indeed, if $t_W=2$, then $F_1$ and $F_2$ have a common linear factor, so $(F_1,F_2)$ is not a regular sequence.

The main result of this section is the following theorem:
\begin{customthm}{\ref{prop: r=m-k,k=m-2}}
Consider $m\geq 2$, $d\geq 1$ and $q\geq d+1$.
For every $W=\langle F_1,F_2 \rangle \subseteq S_d(m,\Fq)$ with $\dim(W)=2$ and $\dim(V(W))=m-2$, we have
\[
|V(W)(\Fq)| 
\leq d^2q^{m-2}+\pi_{m-3}(q).
\] 
Thus, \[e^{\mathbb{P}}_2(d,m,m-2;q) = f^{\mathbb{P}}_2(d,m,m-2;q).\]

Moreover, if $m \geq 3$, then the equality holds only when $t_W=1$ and $V(W)$ does not contain any $\Fq$-linear component with multiplicity $\geq 2$.
\end{customthm}

We will use the following standard degree estimates of Lachaud and Rolland \cite{lachaud2015number}.
They define $\cdeg(Y)$ to be the sum of the degrees of the irreducible components of an algebraic set $Y$.

\begin{lemma}\cite[Theorem 2.1]{lachaud2015number}\label{lem:affine-degree-estimate}
\begin{enumerate}
\item Let $Y\subseteq \A^N$ be an affine algebraic set over $\Fq$ of dimension $k$ . Then
\[
|Y(\Fq)|\leq \cdeg(Y)q^k.
\]
\item Let $Y=Z(f_1, \dots,f_r) \subseteq \A^N$ be a complete intersection. Then
\[
|Y(\Fq)|\leq 
\Big(\prod_{j=1}^r\deg(f_j)\Big)
q^{N-r}.
\]
\item Let $Y\subseteq \PP^N$ be a projective algebraic set over $\Fq$ of dimension $k$ . Then
\[
|Y(\Fq)|\leq \cdeg(Y)\pi_k(q).
\]
\item Let $Y=V(F_1, \dots,F_r) \subseteq \PP^N$ be a complete intersection. Then
\[
|Y(\Fq)|\leq 
\Big(\prod_{j=1}^r\deg(F_j)\Big)
\pi_{N-r}(q).
\]
\end{enumerate}
\end{lemma}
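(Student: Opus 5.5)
The plan is to prove the four parts in the order (1), (2), (3), (4). Parts (1) and (3) come from an induction on $k$ by slicing with $\Fq$-rational hyperplanes, and parts (2) and (4) then follow from the refined B\'ezout inequality. That inequality is the one input I will quote rather than reprove. It is the Fulton--Heintz form: for algebraic sets $Y,Z$ (affine or projective) one has $\cdeg(Y\cap Z)\leq \cdeg(Y)\cdot\cdeg(Z)$. In particular, for a hyperplane $H$ we get $\cdeg(Y\cap H)\leq \cdeg(Y)$.

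For (1), first reduce to $Y$ irreducible. The set $Y(\Fq)$ is covered by the $\Fq$-points of the components, $\cdeg$ is additive over components, and each component has dimension $\leq k$ with $q^{k'}\leq q^k$. So it suffices to prove $|Y(\Fq)|\leq \deg(Y)q^{\dim Y}$ for $Y$ irreducible, by induction on $k=\dim Y$. The case $k=0$ is immediate: $Y$ is a single point, and $1\leq\deg Y$. For $k\geq 1$, some coordinate function $x_i$ is nonconstant on $Y$. For each $a\in\Fq$ the hyperplane $H_a=Z(x_i-a)$ then meets $Y$ in an algebraic set of dimension $\leq k-1$, since $Y$ is irreducible and not contained in $H_a$. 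By B\'ezout, $\cdeg(Y\cap H_a)\leq\deg Y$. The rational points of $Y$ are partitioned by the value of $x_i$, so the (reducible) induction hypothesis gives
\[
|Y(\Fq)|=\sum_{a\in\Fq}|(Y\cap H_a)(\Fq)|\leq q\cdot \deg(Y)q^{k-1}.
\]

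For (3), again reduce to $Y$ irreducible of dimension $k$ and induct on $k$, with $k=0$ trivial. Since the $\Fq$-hyperplanes have no common point in $\PP^N$, there is an $\Fq$-hyperplane $H$ with $Y\not\subseteq H$. Then $Y\setminus H$ is an affine algebraic set in $\PP^N\setminus H\cong\A^N$ of dimension $k$. Its components are the affine parts of the components of $Y$ and have the same degrees, so $\cdeg(Y\setminus H)\leq\deg Y$, and part (1) bounds its rational points by $\deg(Y)q^k$. Meanwhile $Y\cap H$ has dimension $\leq k-1$ and $\cdeg\leq\deg Y$ by B\'ezout, so induction bounds its rational points by $\deg(Y)\pi_{k-1}(q)$. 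Adding the two bounds gives $\deg(Y)(q^k+\pi_{k-1}(q))=\deg(Y)\pi_k(q)$.

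For (2) and (4), the complete intersection hypothesis means $Y$ has pure dimension $N-r$. Iterating B\'ezout over the hypersurfaces $Z(f_j)$ (respectively $V(F_j)$), each of $\cdeg\leq\deg f_j$, gives $\cdeg(Y)\leq\prod_j\deg(f_j)$. Parts (2) and (4) then follow from parts (1) and (3) respectively.

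The main obstacle is the B\'ezout inequality for arbitrary, possibly improper, intersections, where multiplicities and embedded components must not inflate $\cdeg$. The plan is to cite it from Fulton's intersection theory (Example 8.4.6) or from Heintz's affine version rather than reprove it. Everything else is elementary counting.
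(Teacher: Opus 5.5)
Your argument is correct. The paper does not prove this lemma; it simply quotes it from Lachaud--Rolland, so the comparison is between citing the result and your self-contained derivation.

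Your route is as follows:
\begin{enumerate}
\item reduce to geometric components;
\item slice an irreducible affine $Y$ by the $q$ parallel $\Fq$-hyperplanes $x_i=a$, with $x_i$ nonconstant on $Y$;
\item in the projective case, split off one $\Fq$-hyperplane $H\not\supseteq Y$ and apply the affine bound to $Y\setminus H$, whose closure is $Y$;
\item obtain (2) and (4) from the Fulton--Heintz inequality $\cdeg(Y\cap Z)\le\cdeg(Y)\cdot\cdeg(Z)$.
\end{enumerate}
This is the standard proof of that theorem. It makes the only nontrivial input the B\'ezout inequality, while the paper's citation buys brevity.

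Two bookkeeping points should be stated explicitly.

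First, the statement you induct on should cover arbitrary algebraic subsets of $\A^N$ or $\PP^N$ over $\fqb$, with $Y(\Fq)$ meaning points with $\Fq$-coordinates. Geometric components of a set defined over $\Fq$, and their hyperplane slices, need not themselves be defined over $\Fq$. Your argument uses $\Fq$-rationality only for the slicing hyperplanes, so this costs nothing, but it is what your phrase ``the (reducible) induction hypothesis'' silently requires.

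Second, choosing the hyperplane $H$ separately for each component, as you do, is genuinely necessary. For example, if $Y$ is the finite set of all $\Fq$-points of $\PP^N$, every $\Fq$-hyperplane contains some component of $Y$. So a single $H$ avoiding all components need not exist, and applying the affine/hyperplane splitting to $Y$ as a whole would fail.
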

\begin{definition}[Multiplicity and residual scheme]\label{def: residual scheme}
Let $K$ be a field and let $X\subseteq \PP^m_K$ be a projective scheme.

\begin{enumerate}
    \item Let $C$ be an irreducible component of $X$, and let $\eta_C$ be its generic point.
    The \emph{multiplicity} of $C$ in $X$ is
    \[
    \mult_C(X)
    :=
    \length_{\mathcal O_{X,\eta_C}}
    \mathcal O_{X,\eta_C}.
    \]

    \item Suppose that $X$ is a complete intersection and $X'\subseteq X$ is a closed subscheme.
    The \emph{residual subscheme} of $X'$ in $X$ is the subscheme $X''$ with homogeneous ideal
    \[
    I(X'')
    =
    \operatorname{Ann}_{K[X_0,\dots,X_m]}
    \bigl(I(X')/I(X)\bigr).
    \]
\end{enumerate}
\end{definition}

The following proposition reduces us to the case where $t_W=0$.

\begin{proposition}\label{prop: projective conjecture in (m-1,k) implies tW=1 case}
Fix $0 \leq k \leq m-2$. Let $W \subseteq S_d(m,\Fq)$ such that 
\begin{itemize}
    \item $\dim_{\Fq}(W)=m-k$,
    \item $\dim(V(W)) \leq k$.
\end{itemize}
If we know that:
\begin{enumerate}
    \item $t_W=1$,
    \item $e^{\PP}_{m-1-k}(d,m-1,k;q) \leq f^{\PP}_{m-1-k}(d,m-1,k;q)$.
\end{enumerate}
Then we have
\[
|V(W)(\Fq)| 
\leq f^{\PP}_{m-k}(d,m,k;q)
=d^{m-k}q^k+\pi_{k-1}(q).
\]
\end{proposition}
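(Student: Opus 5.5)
Since $t_W=1$, there is an $\Fq$-linear form $L$ and a polynomial $F_1\in W$, $F_1\neq 0$, with $F_1=LG$ for some $G\in S_{d-1}(m,\Fq)$. After an $\Fq$-rational change of coordinates we take $L=X_0$. Extend $F_1$ to a basis $F_1,F_2,\dots,F_{m-k}$ of $W$. Then we split
\[
V(W)(\Fq)=\bigl(V(W)\cap V(X_0)\bigr)(\Fq)\;\sqcup\;\bigl(V(W)\cap U(X_0)\bigr)(\Fq),
\]
and bound the two pieces separately by $d^{m-k-1}q^{k}+\pi_{k-1}(q)$ and $(d-1)d^{m-k-1}q^k$. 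The two bounds add to $d^{m-k}q^k+\pi_{k-1}(q)$.

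\emph{The hyperplane piece.} On $H=V(X_0)\cong\PP^{m-1}$, $F_1$ vanishes identically. So $V(W)\cap H$ is cut out by the restrictions $\overline{F_2},\dots,\overline{F_{m-k}}\in S_d(m-1,\Fq)$. We claim these restrictions are linearly independent. Otherwise some nonzero combination $\sum_{j\geq 2} c_jF_j$ would be divisible by $X_0$, and together with $F_1$ this would give $t_W(X_0)\geq 2$, contradicting $t_W=1$. So they span an $(m-1-k)$-dimensional space $\overline{W}\subseteq S_d(m-1,\Fq)$. Moreover,
\[
V(\overline W)=V(W)\cap H \subseteq V(W),
\]
so $\dim V(\overline W)\leq k$. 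Hypothesis (2) applies with $r=m-1-k=(m-1)-k$, and gives
\[
|(V(W)\cap H)(\Fq)|\leq f^{\PP}_{m-1-k}(d,m-1,k;q)=d^{m-1-k}q^k+\pi_{k-1}(q).
\]
Here we use that for $r=m'-k$ the formula is $d^{m'-k}q^k+\pi_{k-1}$, as recorded in the introduction. One must check that $k\leq (m-1)-1$, i.e.\ $k\leq m-2$, which holds. The edge case $k=m-1$ does not arise.

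\emph{The affine piece.} On $U(X_0)\cong\A^m$, $F_1$ dehomogenizes to $g=G(1,x)$, of degree at most $d-1$. Let $f_j$ be the dehomogenization of $F_j$. The affine set $Z(g,f_2,\dots,f_{m-k})$ equals $V(W)\cap U(X_0)$, so it has dimension at most $k$. It is cut out by $m-k$ equations, so it is a set-theoretic complete intersection in $\A^m$. Lemma~\ref{lem:affine-degree-estimate}(2) then gives the bound $(d-1)d^{m-k-1}q^k$.

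The main obstacle is justifying the use of part (2). The affine polynomials must form a genuine complete intersection, meaning the dimension is exactly $k$. If the dimension is smaller, we use part (1) with a Bézout bound $\cdeg\leq (d-1)d^{m-k-1}$ together with $q^{\dim}\leq q^k$. If $g$ is constant, the affine part is empty and the bound is trivial. So the key step is that, by Bézout/refined Bézout (Fulton), $\cdeg Z(g,f_2,\dots,f_{m-k})\leq \deg g\prod\deg f_j$ for any affine set cut out by these equations. With Lemma~\ref{lem:affine-degree-estimate}(1) and $\dim\leq k$, this gives the bound regardless of whether the intersection is proper.

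Adding the two bounds gives
\[
(d-1)d^{m-k-1}q^k+d^{m-k-1}q^k+\pi_{k-1}(q)=d^{m-k}q^k+\pi_{k-1}(q)=f^{\PP}_{m-k}(d,m,k;q).
\]
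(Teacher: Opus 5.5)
Your proposal is correct and follows essentially the same route as the paper's proof. You split $V(W)$ along $V(L)$, use $t_W=1$ to get $m-k-1$ independent restrictions on $L=0$ and apply hypothesis (2) there, then bound the affine part by $(d-1)d^{m-k-1}q^k$ via Lachaud--Rolland. The only minor difference is in that last step: the paper asserts that $G_1,F_2,\dots,F_{m-k}$ is a regular sequence in order to use the complete-intersection bound, whereas you avoid the properness question with the refined B\'ezout inequality together with part (1). Both arguments are valid.
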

\begin{proof}
Since $t_W=1$, we can choose a basis of $W$ such that $W=\langle LG_1,F_2,\dots, F_{m-k}\rangle$ where $L \in S_1(m)$ and $G_1$ is of degree $d-1$. Then,
\[
V(W)=
V(L,F_2, \dots, F_{m-k}) \cup 
\big( V(G_1, F_2, \dots, F_{m-k}) \backslash V(L)\big).
\]
We know that $V(L,F_2, \dots, F_{m-k})=V(\Bar{F_2}, \dots, \bar{F}_{m-k})$ where 
\[
\Bar{F_i} \in 
S_d(m,\Fq)/LS_{d-1}(m,\Fq) 
\cong S_{d}(m-1,\Fq).
\] 
Since $t_W=1$, these are $m-k-1$ linearly independent polynomials. Moreover,\\ 
$\dim(V(\bar{F_2}, \dots, \bar{F}_{m-k})) \leq k$. Hence, by assumption (2), we have
\[
|V(L,F_2, \dots, F_{m-k})(\Fq) |
\leq d^{m-k-1}q^{k}+\pi_{k-1}(q).
\]

If $d=1$, then $G$ is a constant and $V(G,F_2, \dots, F_{m-k})=\emptyset$, so the desired bound follows immediately. Otherwise, $G_1, \dots, F_{m-k}$ form a regular sequence. Let $g_1,f_2, \dots, f_{m-k}$ be the dehomogenization with respect to $L$, we have that 
\[
V(G_1, F_2, \dots, F_{m-k}) \backslash V(L)
=Z(g_1, f_2, \dots, f_{m-k}) 
\subseteq \A^{m}.
\] 
By Lemma \ref{lem:affine-degree-estimate}, we have
\[
|\big(V(G_1, F_2, \dots, F_{m-k}) \backslash V(L)\big) (\Fq)|
\leq (d-1)d^{m-k-1}q^k.
\] 
This completes the proof.
\end{proof}

\begin{corollary}\label{cor: tw eq 1 case}
When $k=m-2$ and $t_W=1$, then we have $|V(W)(\Fq)| \leq d^2q^{m-2}+\pi_{m-3}(q)$.
\end{corollary}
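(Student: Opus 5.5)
This follows directly from Proposition~\ref{prop: projective conjecture in (m-1,k) implies tW=1 case} with $k=m-2$, so that $m-k=2$. The two hypotheses of that proposition need to be verified. Hypothesis (1), $t_W=1$, is given in the corollary. Hypothesis (2) becomes
\[
e^{\PP}_{1}(d,m-1,m-2;q)\leq f^{\PP}_{1}(d,m-1,m-2;q).
\]
This is the case $r=1$, $k=(m-1)-1$ of the projective conjecture in $\PP^{m-1}$, i.e.\ a single nonzero degree $d$ hypersurface in $\PP^{m-1}$, for which the dimension condition $\dim\le m-2$ is automatic.

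To check (2), I would compute the conjectured value in this case. With $r=m'-k'=1$ (where $m'=m-1$, $k'=m-2$), the convention for $r=m-k$ gives $f^{\A}_1=d\,q^{m-2}$. The index $l$ comes from the $0$-th element $(d,0,\dots,0)$, so $l=1$ and $s=\min(m'-2,k'-1)=m-3$. Hence
\[
f^{\PP}_1(d,m-1,m-2;q)=dq^{m-2}+\pi_{m-3}(q).
\]
This is exactly Serre's bound $|V(F)(\Fq)|\le dq^{m-2}+\pi_{m-3}(q)$ for a nonzero degree $d$ form $F$ in $\PP^{m-1}$, valid for $q\ge d$ (item (1) in the list of known cases, with $r=1$). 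So hypothesis (2) holds.

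Proposition~\ref{prop: projective conjecture in (m-1,k) implies tW=1 case} then gives
\[
|V(W)(\Fq)|\le d^2q^{m-2}+\pi_{m-3}(q).
\]

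Two edge cases need a check. For $m=2$, the ambient space is $\PP^1$ and $k'=0$. Serre's bound then reads $|V(F)(\Fq)|\le d$, since $\pi_{-1}=0$; this holds trivially, as a nonzero binary form of degree $d$ has at most $d$ roots in $\PP^1$. For $d=1$, the proposition's proof already treats this separately. I expect no real obstacle; the only care needed is matching the indexing conventions ($l$, $s$, $r=m-k$) when identifying $f^{\PP}_1$ with Serre's bound.
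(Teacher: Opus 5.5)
Your proposal is correct and matches the paper's proof: you apply Proposition~\ref{prop: projective conjecture in (m-1,k) implies tW=1 case} with $k=m-2$. Hypothesis (2) then reduces to the single-hypersurface bound $e^{\PP}_1(d,m-1;q)=dq^{m-2}+\pi_{m-3}(q)$ proven by Serre. Your explicit identification of $f^{\PP}_1(d,m-1,m-2;q)$ via $l=1$, $s=m-3$, and your $m=2$ check, are details the paper leaves implicit.
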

\begin{proof}
In this case assumption (2) becomes $e_1^{\PP}(d,m-1;q) = dq^{m-2}+\pi_{m-3}(q)$, which was proven by Serre \cite{serre1991lettre}.
\end{proof}

In the proof of Theorem \ref{prop: r=m-k,k=m-2}, we are left with the case $t_W=0$. 
Hence, in the rest of this section, we assume that no nonzero element of $W$ is divisible by an $\Fq$-linear form.

Throughout the rest of the section, for an algebraic set $X$, by an \emph{$\Fq$-irreducible component} of $X$ we mean an irreducible component of $X$ viewed as a scheme over $\Fq$; its \emph{degree} is the sum of the degrees of the (finitely many, Galois-conjugate) geometric components lying over it, and its \emph{multiplicity} in $X$ is the length of the local ring of $X$ at its generic point, as in Definition \ref{def: residual scheme}. In particular, any $\Fq$-rational linear component is geometrically irreducible and its multiplicity remains the same after extending the base field.

\subsection{If $V(W)$ contains an $\Fq$-rational linear component of multiplicity $1$}

In this section we consider the case when $V(W)$ has a $\Fq$-rational linear component with multiplicity $1$ (and $t_W=0$).
We will show the following proposition.

\begin{proposition}\label{prop: case 1 mult 1 linear component}
Let $W=\langle F_1,F_2\rangle$, where $(F_1,F_2)$ is a regular sequence in $S(m,\Fq)$. 
Let $X=V(W) \subseteq \PP^m$. 
Suppose
\begin{enumerate}
\item $X$ has a $\Fq$-rational linear component $l$ appearing with multiplicity $1$;
\item $t_W=0$;
\item $m \geq 3$.
\end{enumerate}
Then we have:
\begin{equation}\label{eq: multiplicity 1 linear component}
|X(\Fq)| \leq d^2q^{m-2}+\pi_{m-3}(q)-(d^2-1)q^{m-3}.
\end{equation}
\end{proposition}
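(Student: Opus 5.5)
Since $X$ contains the $\Fq$-rational codimension-$2$ linear space $l=V(L_1,L_2)$, with $L_1,L_2\in S_1(m,\Fq)$ independent, we have $F_1,F_2\in(L_1,L_2)$. We can therefore write
\[
F_i=L_1A_i+L_2B_i,\qquad A_i,B_i\in S_{d-1}(m,\Fq),\quad i=1,2,
\]
and set $\Delta=A_1B_2-A_2B_1$. The plan is to count $l(\Fq)$ exactly and then bound $(X\setminus l)(\Fq)$ using the pencil of $\Fq$-hyperplanes through $l$. First, $|l(\Fq)|=\pi_{m-2}(q)=q^{m-2}+\pi_{m-3}(q)$. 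So \eqref{eq: multiplicity 1 linear component} is equivalent to
\[
|(X\setminus l)(\Fq)|\leq (d^2-1)(q^{m-2}-q^{m-3}).
\]
By the standard linkage description, the residual scheme of $l$ in $X$ (Definition~\ref{def: residual scheme}) is cut out by $(F_1,F_2,\Delta)$. The hypothesis that $l$ has multiplicity $1$ should translate into $\Delta\notin(L_1,L_2)$, i.e.\ the residual does not contain $l$ again. We only need this as a genericity statement about $\Delta|_l$.

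The main step is a fibration. The $\Fq$-rational points of $\PP^m\setminus l$ are partitioned by the $q+1$ rational hyperplanes $H_{[a:b]}=V(bL_1-aL_2)$ through $l$. Each piece $H_{[a:b]}\setminus l$ is an affine space $\A^{m-1}$. Take for instance $H_a=V(L_2-aL_1)$. On $H_a$ we have $F_i|_{H_a}=L_1\,(A_i+aB_i)|_{H_a}$, and $L_1\neq0$ off $l$. Hence
\[
(X\cap H_a)\setminus l=Z\big(A_1+aB_1,\;A_2+aB_2\big)\subseteq H_a\setminus l\cong\A^{m-1}.
\]
(On $V(L_1)$ the roles of $A_i$ and $B_i$ are interchanged.) Suppose the two restricted forms $P_a:=(A_1+aB_1)|_{H_a}$ and $Q_a:=(A_2+aB_2)|_{H_a}$, of degree $\le d-1$, have no common factor. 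Then they form a complete intersection, and Lemma~\ref{lem:affine-degree-estimate}(2) gives at most $(d-1)^2q^{m-3}$ rational points in that fiber. If every fiber is of this good type, summing gives $(q+1)(d-1)^2q^{m-3}$. A direct comparison shows
\[
(q+1)(d-1)^2\leq (d^2-1)(q-1)\iff (d-1)(q+1)\le(d+1)(q-1)\iff d\le q,
\]
which holds, so the bound follows, with room to spare.

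The main obstacle is the \emph{degenerate fibers}, where $P_a$ and $Q_a$ share a common factor $g_a$ of degree $e_a\geq1$. On such a fiber the points of $Z(g_a)$ are bounded only by a hypersurface count, $e_aq^{m-2}$ in $\A^{m-1}$ by Lemma~\ref{lem:affine-degree-estimate}(1), plus a complete-intersection count for the cofactors. The plan is to show that degeneracy is rare and costly to the rest of the count, using the two hypotheses.
\begin{enumerate}
\item A common factor on $H_a$ forces $\Delta|_{H_a}$ to vanish on $V(g_a)\cap H_a$. Therefore $V(g_a)$ is a component of the residual scheme lying inside $H_a$. Since the residual has total degree $d^2-1$, the sum $\sum_a e_a$ is bounded, and each such component uses up degree that would otherwise be spread across the other fibers.
\item The condition $t_W=0$ should rule out the extreme case $e_a=d-1$ on a whole fiber together with a linear factor. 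It also prevents $l$ from being swallowed: a degenerate component containing $l$ would contradict $\Delta\notin(L_1,L_2)$.
\end{enumerate}
Concretely, I would bound the total contribution as follows. Let $X''$ be the residual scheme. Split it as $X''=X''_{\mathrm{vert}}\cup X''_{\mathrm{gen}}$, where $X''_{\mathrm{vert}}$ consists of the components contained in some $H_a$ and $X''_{\mathrm{gen}}$ of the rest. A vertical component of degree $e$ contributes at most $e\,q^{m-2}$ points off $l$. Since it lies in a single $H_a\setminus l\cong\A^{m-1}$ and has dimension $m-2$, its points off $l$ can be bounded by $e(q^{m-2}-q^{m-3})$, by removing its intersection with $l$, which has dimension $m-3$ and contains $\gtrsim e\,q^{m-3}$ rational points; this step needs checking. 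A general component $C$ of degree $e$ meets each $H_a$ properly, so fiberwise it contributes at most $e\,q^{m-3}$ per fiber, with its part on $l$ removed. Making this uniform, so that the general and vertical parts together give at most $(d^2-1)(q^{m-2}-q^{m-3})$, is where I expect the real work. In particular one must verify that $X''\cap l$ has enough rational points, or else argue directly via the fiber counts with $d\le q-1$ absorbing the error. I would try the fiberwise count first and fall back on the component-wise count only for the vertical pieces.
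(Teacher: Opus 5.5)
Your setup is the paper's: the $q+1$ rational hyperplanes through $l$, the residual $V(F_1,F_2,\Delta)$, the bound $e_aq^{m-2}+(d-1-e_a)^2q^{m-3}$ on a degenerate fiber, and $e_a\le d-2$ from $t_W=0$. The gap is the final estimate, which you leave open. It is quantitatively tight. A degenerate fiber exceeds $(d-1)^2q^{m-3}$ by $e_a(q-2d+2+e_a)q^{m-3}\le e_a(q-d)q^{m-3}$. The target leaves exactly $2(d-1)(q-d)q^{m-3}$ of room above $(q+1)(d-1)^2q^{m-3}$. So you need $\sum_a e_a\le 2d-2$, a bound linear in $d$. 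The bound you actually have, $\sum_a e_a\le \deg(\text{residual})=d^2-1$, is far too weak. Your proposed refinement, that a vertical component of degree $e$ has at most $e(q^{m-2}-q^{m-3})$ rational points off $l$, is false: in the example below a vertical line has $q$ points off $l$.

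The paper obtains $\sum_H a_H\le 2d-2$ as follows. Each vertical part $V(K_H)$ meets $l$ in a divisor of degree $a_H$ inside $Y\cap l=V(\Delta|_l)$, which has degree $2d-2$, and the paper adds these up. Only your weaker observation is actually true: $g_a|_l$ divides $\Delta|_l$ for each $a$ separately. The additivity across different hyperplanes fails when traces coincide, and in fact the statement itself is false for $d=3$.

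Take ternary cubics $F_1,F_2$ over $\Fq$ whose base locus in $\PP^2$ is nine distinct rational points, no three collinear. For instance, take $E\cap C$, where $E$ is a Weierstrass cubic and $C$ is a cubic cutting out $P_i=2^{i-1}g$ for $1\le i\le 8$ and $P_9=-255g$, with $g\in E(\Fq)$ of order at least $384$. Such $C$ exists because $\sum P_i=O$, and no three of the $P_i$ sum to $O$. View $F_1,F_2$ in $\PP^3$.
\begin{itemize}
\item $X$ is the cone of nine rational lines through $p=[0:0:0:1]$, each of multiplicity one.
\item No member of the pencil has an $\Fq$-linear factor: such a factor cannot involve $X_3$, and in the plane it would contain three base points. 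Hence $t_W=0$.
\item $|X(\Fq)|=9q+1$, whereas the proposition asserts $|X(\Fq)|\le 9q-7$.
\end{itemize}
With $l$ one of the lines, the other eight lie in eight distinct rational planes through $l$, so $\sum_H a_H=8>2d-2=4$. All eight traces are the single point $p$, where $\Delta|_l$ vanishes only to order $4$.

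So your caution was justified and your plan cannot be completed as stated. A correct version needs an extra hypothesis excluding such overlapping traces, or a weaker conclusion.
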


Write $X=l \cup Y$ where $Y$ is the residual scheme of $l$ in $X$. Then $|X(\Fq)|=|l(\Fq)|+|(Y \backslash l) (\Fq)|$. After linear change of coordinates over $\Fq$, we may assume $l=V(X_0,X_1)$. Then we can write the ideal of $Y$ explicitly using the following formula of linkage ideal:
\begin{lemma}\cite[Theorem 21.23]{EisenbudCA}\label{lem: residual ideal}
 Write
\[
F_1=X_0A+X_1B,\qquad F_2=X_0C+X_1D,
\]
where $A,B,C,D \in S_{d-1}(m,\Fq)$. Let $\Delta=AD-BC$. Then $Y=V(F_1,F_2,\Delta)$.
\end{lemma}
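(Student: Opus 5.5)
We will show that the homogeneous ideal of the residual scheme, $I(Y)=(F_1,F_2):(X_0,X_1)$, is equal to $(F_1,F_2,\Delta)$. This is the classical linkage statement for a complete intersection contained in another complete intersection. Throughout, write $J=(F_1,F_2)$ and $\mathfrak p=(X_0,X_1)$. The decomposition in the statement says $(F_1,F_2)^T=M\,(X_0,X_1)^T$ with
\[
M=\begin{pmatrix}A&B\\ C&D\end{pmatrix},\qquad \Delta=\det M .
\]
Note that $J\subseteq\mathfrak p$, since $l\subseteq X$.

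The inclusion $(F_1,F_2,\Delta)\subseteq J:\mathfrak p$ is a direct computation using the adjugate of $M$:
\[
DF_1-BF_2=X_0\Delta,\qquad AF_2-CF_1=X_1\Delta .
\]
Hence $\Delta\mathfrak p\subseteq J$.

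For the reverse inclusion I would follow \cite[Theorem 21.23]{EisenbudCA}. Since $(F_1,F_2)$ and $(X_0,X_1)$ are regular sequences, the Koszul complexes $K(F_1,F_2)$ and $K(X_0,X_1)$ are free resolutions of $S/J$ and $S/\mathfrak p$. The inclusion $S/J\twoheadrightarrow S/\mathfrak p$ lifts to a map of complexes $K(F)\to K(X)$. This map is given by $M^T$ in homological degree $1$ and by $\det M=\Delta$ in degree $2$.

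Next, compute $\operatorname{Hom}_S(S/\mathfrak p,S/J)$ in two ways.
\begin{itemize}
\item Directly, it equals $(J:\mathfrak p)/J$.
\item Since $J$ is generated by a regular sequence of length $2$ contained in $\mathfrak p$, there are natural isomorphisms $\operatorname{Hom}_S(S/\mathfrak p,S/J)\cong\operatorname{Ext}^2_S(S/\mathfrak p,S)$, and $\operatorname{Ext}^2_S(S/J,S)\cong S/J$. Dualizing the comparison map of Koszul complexes (both are self-dual) identifies the induced map $S/\mathfrak p\cong\operatorname{Ext}^2(S/\mathfrak p,S)\to\operatorname{Ext}^2(S/J,S)\cong S/J$ with multiplication by $\Delta$.
\end{itemize}
Putting these together, $(J:\mathfrak p)/J$ is generated by the class of $\Delta$, and so $J:\mathfrak p=J+(\Delta)$.

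As a consequence, $Y=V(F_1,F_2,\Delta)$, both as a scheme and as a set.

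The main obstacle is this reverse inclusion, specifically checking that the dualized comparison map really is multiplication by $\det M$, up to a unit and compatibly with the identifications. If I wanted to avoid the Ext machinery, I would try a direct elimination argument. Suppose $GX_0=PF_1+QF_2$ and $GX_1=P'F_1+Q'F_2$. Substituting the expressions for $F_1,F_2$ and using that $X_0,X_1$ is a regular sequence, the resulting relations among $(P,Q,P',Q')$ are Koszul relations. Solving them should express $G$ as an element of $J+(\Delta)$.
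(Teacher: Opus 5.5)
Your proposal is correct and matches the paper: the paper gives no argument of its own and simply cites \cite[Theorem 21.23]{EisenbudCA}, whose content is exactly $(F_1,F_2):(X_0,X_1)=(F_1,F_2,\Delta)$. Your proof (degree-$2$ Koszul comparison map equal to $\det M$, dualized through $\operatorname{Hom}_S(S/\mathfrak p,S/J)\cong\operatorname{Ext}^2_S(S/\mathfrak p,S)$) is the standard proof of that linkage fact; the compatibility you flag follows from naturality of this isomorphism in the first argument, and unit ambiguities in the identifications do not change the ideal generated by $\Delta$.
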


We write $Y|_l$ for the scheme-theoretic intersection $Y\cap l$, viewed as a subscheme of $l$.
\begin{lemma}\label{lem: degree of Y on l}
$Y|_l$ is a hypersurface in $l$ of degree $2d-2$.
\end{lemma}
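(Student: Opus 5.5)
The plan is to compute $Y\cap l$ directly from the linkage description in Lemma~\ref{lem: residual ideal}, restricting the generators $F_1,F_2,\Delta$ to $l=V(X_0,X_1)\cong\PP^{m-2}$. Since $F_1=X_0A+X_1B$ and $F_2=X_0C+X_1D$, both $F_1$ and $F_2$ vanish identically on $l$. Hence the ideal of $Y|_l$ inside the coordinate ring $\Fq[X_2,\dots,X_m]$ of $l$ is generated by the single form $\bar\Delta:=\Delta(0,0,X_2,\dots,X_m)=\bar A\bar D-\bar B\bar C$, where bars denote restriction to $X_0=X_1=0$. This form has degree $2(d-1)=2d-2$. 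So the statement reduces to showing that $\bar\Delta$ is not identically zero: then $Y|_l=V(\bar\Delta)$ is a hypersurface of degree $2d-2$ in $l$.

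The key step, which is the main obstacle, is to show $\bar\Delta\neq 0$ using the hypothesis that $l$ has multiplicity $1$ in $X$.
\begin{enumerate}
\item Localize at the generic point $\eta$ of $l$. The local ring $\mathcal O_{\PP^m,\eta}$ is a regular local ring of dimension $2$ with maximal ideal $(X_0,X_1)$, after dehomogenizing by some $X_j$ with $j\geq 2$.
\item The scheme $X$ has length $1$ at $\eta$ exactly when $(F_1,F_2)\mathcal O_\eta=\mathfrak m_\eta$. This holds exactly when $F_1,F_2$ have linearly independent images in $\mathfrak m_\eta/\mathfrak m_\eta^2$, whose basis is $X_0,X_1$ with coefficients in the residue field $K(l)$.
\item The images of $F_1,F_2$ in $\mathfrak m_\eta/\mathfrak m_\eta^2$ are $\bar A X_0+\bar B X_1$ and $\bar C X_0+\bar D X_1$. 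Their independence is equivalent to $\det\begin{pmatrix}\bar A&\bar B\\ \bar C&\bar D\end{pmatrix}=\bar\Delta\neq 0$ in $K(l)$, that is, $\bar\Delta$ not vanishing identically on $l$.
\end{enumerate}
This is the standard Jacobian criterion, since the $2\times 2$ minor of the Jacobian in the $X_0,X_1$ directions, restricted to $l$, is exactly $\bar\Delta$. So multiplicity $1$ gives $\bar\Delta\not\equiv 0$.

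One should also check that the decomposition $F_i=X_0(\cdot)+X_1(\cdot)$ is not unique, but that $\bar\Delta$ is independent of the choice. Indeed, $A,B$ may change by $A+X_1E$, $B-X_0E$, which alters $\bar A,\bar B$ not at all. Hence $Y|_l$ is well defined.

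Finally, confirm that the scheme $Y\cap l$ is cut out on $l$ by $\bar\Delta$ alone. Its ideal is $(F_1,F_2,\Delta,X_0,X_1)=(\Delta,X_0,X_1)$, so $Y|_l=V_l(\bar\Delta)$ is a degree $2d-2$ hypersurface in $l\cong\PP^{m-2}$, which is nonempty-dimensional since $m\geq 3$. The case $d=1$ is degenerate: $\bar\Delta$ is then a nonzero constant and $Y|_l=\emptyset$, consistent with a degree-$0$ hypersurface.
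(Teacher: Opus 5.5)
Your proof is correct and follows the paper's route: restrict the linkage generators $F_1,F_2,\Delta$ to $l$, observe that $F_1,F_2$ vanish there so $Y|_l=V(\Delta|_l)$, and use multiplicity one to conclude that $\Delta|_l$ is a nonzero form of degree $2d-2$. The paper asserts the step ``multiplicity one $\Rightarrow \Delta|_l\neq 0$'' without justification; your argument via $\mathfrak m_\eta/\mathfrak m_\eta^2$ at the generic point of $l$, together with your check that $\Delta|_l$ does not depend on the chosen decomposition, supplies it.
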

\begin{proof}
$Y|_l = V(F_1,F_2, \Delta)|_l=V(\Delta)|_l$.
Since \(l\) appears with multiplicity one in \(X\), the restriction
$\Delta|_{l}\in \Fq[X_2,\ldots,X_m]_{2d-2}$ is a nonzero degree $2d-2$ polynomial. Hence, it is a hypersurface of degree $2d-2$.
\end{proof}
Now, we count $|(X\backslash l)(\Fq)|$. For each point $P \in (X\backslash l)(\Fq)$, there is a unique $\Fq$-rational hyperplane containing $l$ and $P$. Hence, 
let \(\check{l}(\Fq)\) denote the set of \(\Fq\)-hyperplanes containing
\(l\). We have that
\[
|(X\backslash l)(\Fq)|
=\sum_{H \in \lc(\Fq)}
|\big((X \cap H) \backslash l\big)(\Fq)|.
\]
We also know that $|\check{l}(\Fq)|=q+1$.
Fix \(H\in \check{l}(\Fq)\). After linear change of variables we can assume that $H=V(X_0)$. Inside \(H\simeq \PP^{m-1}\), $l=V(X_1)$. Let $\Fb_i$ denote the image of $F_i$ in $S(m,\Fq)/(X_0) \cong S(m-1,\Fq)$. Let $X_1K_H=\gcd(\Fb_1, \Fb_2)$, then we can write
\[
\Fb_1=X_1 K_H \Fb_1',
\qquad
\Fb_2=X_1 K_H \Fb_2',
\qquad
a_H=\deg(K_H)
\]
where $\gcd(\Fb_1',\Fb_2')=1$. Let $f_1, f_2, k_H$ be the dehomogenization of $\Fb_1',\Fb_2',K_H$ with respect to $X_1$. 
Then, $(X\cap H) \backslash l \cong (Z(k_H)\bigcup Z(f_1,f_2))\subseteq \A^{m-1}$. Hence, by Lemma \ref{lem:affine-degree-estimate}, we have
\begin{equation}\label{Eqn: in terms of aH}
|((X \cap H) \backslash l)(\Fq)| 
\leq a_H q^{m-2}+(d-1-a_H)^2q^{m-3}
=q^{m-3}\big[(d-1)^2+a_H(q-2d+2+a_H)\big].
\end{equation}
Therefore, to bound $|((X \cap H) \backslash l) (\Fq)|$, we want to bound $a_H$ and $\sum_{H \in \lc(\Fq)}a_H$.

\begin{lemma}\label{lem: bound on the sum}
We have
\begin{enumerate}
\item $a_H \leq d-2$;
\item $\sum_{H \in \lc(\Fq)}a_H \leq 2d-2$.
\end{enumerate}
\end{lemma}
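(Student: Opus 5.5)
The plan is to reduce both statements to the $2\times 2$ linkage matrix of Lemma~\ref{lem: residual ideal}. Parametrize the $\Fq$-hyperplanes containing $l=V(X_0,X_1)$ as $H_{[s:t]}=V(tX_0-sX_1)$, $[s:t]\in\PP^1(\Fq)$. On $H_{[s:t]}$ write $X_0=s\lambda$, $X_1=t\lambda$. Then
\[
F_1|_H=\lambda\,(sA+tB)|_H,\qquad F_2|_H=\lambda\,(sC+tD)|_H .
\]
Hence, up to the common factor $\lambda$ (which plays the role of $X_1$ in the paper's normalization),
\[
K_H=\gcd\big((sA+tB)|_H,\ (sC+tD)|_H\big),\qquad a_H=\deg K_H .
\]

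\textbf{Part (1).} Suppose $a_H=d-1$. Then $(sA+tB)|_H$ and $(sC+tD)|_H$ both have degree $d-1$ and share a factor of degree $d-1$, so they are proportional. The case where one of them is zero is included. Hence some nonzero $\Fq$-combination $\mu F_1-\nu F_2\in W$ vanishes identically on $H$, so it is divisible by the $\Fq$-linear form $tX_0-sX_1$. This contradicts $t_W=0$. So $a_H\le d-2$.

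\textbf{Part (2), first step.} Restrict further to $l$. Put $A_l=A|_l,\dots,D_l=D|_l\in\Fq[X_2,\dots,X_m]_{d-1}$ and
\[
M=\begin{pmatrix}A_l&B_l\\ C_l&D_l\end{pmatrix},\qquad \det M=\Delta|_l .
\]
By Lemma~\ref{lem: degree of Y on l}, $\Delta|_l\neq 0$ and has degree $2d-2$. I first check that $K_H|_l\neq 0$ with $\deg K_H|_l=a_H$. If $\lambda$ (the equation of $l$ inside $H$) divided $K_H$, then both $F_i|_H$ would be divisible by $\lambda^2$. Expanding, this forces $\Delta|_l=0$, contradicting multiplicity one.

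\textbf{Part (2), second step.} $K_H|_l$ divides both entries of $M\binom{s}{t}$, so it remains to show
\[
\sum_{H}\deg K_H|_l\le \deg\Delta|_l .
\]
I would argue prime by prime. Fix an irreducible $p\in\Fq[X_2,\dots,X_m]$ and localize at $p$ to get a DVR $R$. Put $M$ in Smith normal form $\mathrm{diag}(p^{\alpha},p^{\beta})$ with $\alpha\le\beta$ and $\alpha+\beta=v_p(\Delta|_l)$. The vectors $(s,t)$ for distinct $H$ are pairwise independent mod $p$, because $st'-s't\in\Fq^\times$. Therefore at most one $H$ can have $v_p(M\binom{s}{t})>\alpha$; for it the valuation is at most $\beta$, and all others have valuation exactly $\alpha$. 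If $\alpha=0$, we get $\sum_H v_p(K_H|_l)\le\beta\le v_p(\Delta|_l)$. Multiplying over all $p$ then gives $\sum_H a_H\le 2d-2$.

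\textbf{Main obstacle.} The hard case is $\alpha>0$, that is, $p$ divides all of $A_l,B_l,C_l,D_l$. The crude count above then only gives $(q+1)\alpha$. Here I would stop working on $l$ and use that $K_H$ is a common factor on all of $H$, not only on $l$. Each $Z_H:=V(tX_0-sX_1,K_H)$ is an $(m-2)$-dimensional union of components of $Y$, and these are distinct for distinct $H$. Moreover $Z_H\cap l=V(K_H|_l)$ in $l$. The idea is to bound the multiplicity of the divisor $Y|_l=V(\Delta|_l)$ along $V(p)$ from below by $\sum_H v_p(K_H|_l)$, using that $Z_H\subseteq Y$ and that distinct $Z_H$ lie in distinct hyperplanes through $l$. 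Concretely, I would localize $S$ at the generic point of $V(X_0,X_1,p)$, where $Y$ is a codimension-$2$ Cohen–Macaulay scheme. There I would compare the intersection multiplicity of $Y$ with $l$ against that of the union of the $Z_H$, since each $Z_H$ meets $l$ transversally in the direction $(s,t)$.

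If this local comparison turns out to be delicate, a fallback is to show that $\alpha>0$ is impossible under $t_W=0$ and multiplicity one, by writing $F_1,F_2\in(X_0,X_1)^2+p\,(X_0,X_1)$ and deriving a contradiction.
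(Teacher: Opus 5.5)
\textbf{Part (1) and the $\alpha=0$ case.} Your part (1) is correct and is the paper's argument. In part (2), the following steps are all correct: the reduction to $\sum_H\deg K_H|_l\le\deg\Delta|_l$, the check that $K_H|_l\neq 0$ (otherwise $M\binom{s}{t}=0$), and the Smith normal form count. So you have genuinely proved (2) whenever $A_l,B_l,C_l,D_l$ have no common prime factor.

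\textbf{The gap at $\alpha>0$ cannot be closed.} In that case statement (2) is false. Take $m=3$, $d=3$ and $F_i=f_i(X_0,X_1,X_2)$, where the pencil $\langle f_1,f_2\rangle$ of plane cubics has nine distinct $\Fq$-rational base points. One of them is $O=[0:0:1]$, no three are collinear, and no six lie on a conic. Such pencils exist for suitable $q$: take nine rational points summing to zero on a plane cubic $E$ with a rational flex as origin, no three of which sum to zero. Then take the pencil spanned by $E$ and a cubic cutting out exactly these points.

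\begin{itemize}
\item $X$ is the cone over the nine points with vertex $[0:0:0:1]$, and $l=V(X_0,X_1)$ is a rational component of multiplicity one.
\item $t_W=0$. A linear factor of a form in $X_0,X_1,X_2$ involves only those variables. So a member $L\cdot Q$ would be a plane cubic through all nine base points. But a line contains at most two of them and a conic at most five.
\item For each of the other eight base points $P$, the plane spanned by $l$ and the line over $P$ has $a_H=1$. Hence $\sum_H a_H=8>4=2d-2$.
\item In your notation, $A_l,\ldots,D_l$ are scalar multiples of $X_2^2$. So at $p=X_2$ you get $\alpha=\beta=2$ and $\Delta|_l=cX_2^4$ with $c\neq0$, while eight of the $K_H|_l$ are divisible by $X_2$.
\end{itemize}

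Consequently both of your repair strategies fail. The fallback ``$\alpha>0$ is impossible under $t_W=0$ and multiplicity one'' is false. So is the local inequality you hoped for, namely that the multiplicity of $Y|_l$ along $V(p)$ is at least $\sum_H v_p(K_H|_l)$: here it is $4$ against $8$.

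\textbf{The paper's proof has the same flaw.} Its proof of (2) breaks exactly where you sensed trouble. It asserts $\sum_H\deg(Y_H|_l)\le\deg(Y|_l)$, treating the degree of $Y\cap l$ as additive over the components of $Y$. But $Y\cap l$ is an excess intersection: a divisor in $l$ rather than a locus of codimension $2$ in $l$. Nothing prevents many $Y_H$, lying in different hyperplanes through $l$, from meeting $l$ along the same divisor. In the cone, $Y|_l$ is the vertex with multiplicity $4$, yet eight components $Y_H$ pass through it. Your Smith normal form analysis gives a sufficient condition for additivity: $\alpha_p=0$ for every prime $p$.

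The failure also propagates downstream. If all nine base points are rational, then $|X(\Fq)|=9q+1$. This exceeds the bound $9q-7$ claimed in Proposition~\ref{prop: case 1 mult 1 linear component}. It also gives equality in Theorem~\ref{prop: r=m-k,k=m-2} with $t_W=0$, contradicting the ``moreover'' clause, though not the main bound $d^2q^{m-2}+\pi_{m-3}(q)$.
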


\begin{proof}
\begin{enumerate}
\item If $a_H=d-1$, then
$\Fb_1', \Fb_2'$ are constants. Hence, there exist $\alpha, \beta \in \Fq$, not both zero such that
$\alpha \Fb_1+\beta \Fb_2=0$. 
Since $H=V(X_0)$, there is $G \in S_{d-1}(m,\Fq)$ such that $X_0G=\alpha F_1+\beta F_2 \in W$. This violates the assumption that $t_W=0$. Hence, we must have $a_H \leq d-2$.
\item
We write $Y=\left(\bigcup_{H \in \lc(\Fq)}Y_H \right)\bigcup Z$, where $Y_H$ is union of $\Fq$-irreducible components contained in $H$ and $Z$ is the union of $\Fq$-irreducible components not contained in any $H \in \lc(\Fq)$.
By Lemma \ref{lem: degree of Y on l}, we have that $\deg(Y|_l)=2d-2$. 

On the other hand, we claim that $a_H=\deg(Y_H)=\deg({Y_H}|_l)$. Indeed, 
\[Y \cap H=(Y \cap H)_{m-2} \cup (Y \cap H)_{ m-3}\]
Here $(Y \cap H)_k$ denotes the union of dimension $k$ components. $(Y \cap H)_{m-2}=V(K_H)$ is pure of dimension $m-2$.
Hence, $a_H=\deg(K_H)=\deg(Y_H)$. Moreover, since $l$ has multiplicity one in $X$, we know that $K_H$ does not vanish on the entire $l$. 
So $V(K_H)$ intersects $l$ properly and hence $\deg(V(K_H)|_l)=\deg(V(K_H))=a_H$.  Therefore,
\[
\sum_{H\in \check{l}(\Fq)}a_H
=\sum_{H \in \lc(\Fq)}\deg({Y_H}|_l) \leq 
\deg(Y|_l)
=
2d-2. \qedhere
\]
\end{enumerate}
\end{proof}
\begin{proof}[Proof of Proposition \ref{prop: case 1 mult 1 linear component}]
By \eqref{Eqn: in terms of aH} and Lemma~\ref{lem: bound on the sum}, we get
\begin{equation*}
\begin{split}
|X(\Fq)|
&=|l(\Fq)|+|(X\backslash l)(\Fq)| \\
&\leq \pi_{m-2}(q)
+\sum_{H \in \lc(\Fq)}
q^{m-3}\big[(d-1)^2+a_H(q-d)\big]\\
& \leq \pi_{m-2}(q)
+(d-1)^2(q+1)q^{m-3}
+q^{m-3}(q-d)\sum_{H \in \lc(\Fq)}a_H\\
&\leq \pi_{m-3}(q)
+q^{m-3}\big[q+(d-1)^2(q+1)+(2d-2)(q-d)\big]\\
&=d^2q^{m-2}
+\pi_{m-3}(q)-(d^2-1)q^{m-3}.
\end{split}
\end{equation*}
This finishes the proof of Proposition \ref{prop: case 1 mult 1 linear component}.
\end{proof}
\begin{proposition}\label{prop: case 1 mult 1 and 2 linear component}
Let $W=\langle F_1,F_2\rangle$ where $(F_1,F_2)$ is a regular sequence in $S(m,\Fq)$. Let $X=V(W) \subseteq \PP^m$. Suppose
\begin{enumerate}
    \item $X$ has a $\Fq$-rational linear component $l$ with multiplicity $1$;
    \item $X$ has a $\Fq$-rational linear component $l'$ with multiplicity $ \geq 2$;
    \item $m \geq 3$.
\end{enumerate}
Then we have:
\begin{equation}\label{eq: multiplicity 1 and 2 linear component}
|X(\Fq)| \leq (d^2-1)q^{m-2}+\pi_{m-3}(q)-(d-1)^2q^{m-3}.
\end{equation}
\end{proposition}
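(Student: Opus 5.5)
We follow the strategy of Proposition \ref{prop: case 1 mult 1 linear component}, but now exploit the extra multiple linear component $l'$ to sharpen the count. As before, change coordinates over $\Fq$ so that $l=V(X_0,X_1)$, and write $X=l\cup Y$ with $Y$ the residual scheme. By Lemma \ref{lem: residual ideal}, $Y=V(F_1,F_2,\Delta)$. Since $l'\neq l$ is a component of $X$ of multiplicity $\geq 2$, it is also a component of $Y$, with multiplicity $\geq 2$.

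The decomposition will be
\[
|X(\Fq)|=|l(\Fq)|+\sum_{H\in\lc(\Fq)}|((X\cap H)\setminus l)(\Fq)|.
\]
We do not assume $t_W=0$ here. If $t_W=1$, a direct argument as in Proposition \ref{prop: projective conjecture in (m-1,k) implies tW=1 case} should be sharpened. More likely, though, this proposition is only applied under $t_W=0$, so we assume $t_W=0$, and then Lemma \ref{lem: bound on the sum} and \eqref{Eqn: in terms of aH} remain available.

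The key observation is the position of $l'$ relative to $l$. Two codimension-two linear spaces in $\PP^m$ meet in dimension at least $m-4$. We split into two cases.

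\textbf{Case 1: $l'$ lies in some $H_0\in\lc(\Fq)$,} i.e.\ $l\cap l'$ has dimension $m-3$. Then $l'$ contributes to $Y_{H_0}$ with multiplicity $\geq 2$. In the notation of Lemma \ref{lem: bound on the sum}, the equality $a_{H_0}=\deg(Y_{H_0}|_l)$ should count $l'$ with its multiplicity. So $l'$ uses up at least $2$ of the total budget $2d-2$ in $\sum_H a_H$. Yet the points of $l'\setminus l$ number only $q^{m-2}$, whereas the crude bound \eqref{Eqn: in terms of aH} charges $2q^{m-2}$ for that degree-$2$ contribution.

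To exploit this, we refine \eqref{Eqn: in terms of aH} for $H_0$. Write $K_{H_0}=X_2^{\mu}K'$, where $l'=V(X_0,X_2)$ and $\mu\geq 2$ is the order along $l'$. Count $Z(k_{H_0})$ as $q^{m-2}+(a_{H_0}-\mu)q^{m-2}$ rather than $a_{H_0}q^{m-2}$. This saves $(\mu-1)q^{m-2}\geq q^{m-2}$.

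Redoing the arithmetic of Proposition \ref{prop: case 1 mult 1 linear component} with this saving gives
\[
d^2q^{m-2}+\pi_{m-3}(q)-(d^2-1)q^{m-3}-q^{m-2}+(\text{correction}).
\]
One must check that this is at most the target $(d^2-1)q^{m-2}+\pi_{m-3}(q)-(d-1)^2q^{m-3}$. The difference between the two bounds is $(2d-2)q^{m-3}$, which must be absorbed using the $(\mu-1)$ saving weighted by $(q-d)$ terms. That is, in the term $a_H(q-d)$ one should subtract $(\mu-1)(q-d)q^{m-3}$ plus the overlap.

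\textbf{Case 2: $l'$ is not contained in any $H\in\lc(\Fq)$.} Then $l'$ is a component of $Z$. Its restriction $l'|_l$ has degree counted with multiplicity $\geq 2$ inside $Y|_l$, which has degree $2d-2$. This forces $\sum_H a_H\leq 2d-4$. Moreover, each $l'\cap H$ meets every $H\in\lc(\Fq)$ in an $(m-3)$-plane lying in $Z(f_1,f_2)$. The rational points of $l'$ are already accounted for by the complete-intersection bound $(d-1-a_H)^2q^{m-3}$ in each $H$, so the improvement comes purely from the reduced sum. Inserting $\sum_H a_H\leq 2d-4$ saves $2(q-d)q^{m-3}$. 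This falls short of a full $q^{m-2}$, so for the sharper bound one must also use that $l'$ is a double component inside $Z(f_1,f_2)$. The plan is to replace $(d-1-a_H)^2q^{m-3}$ by $\big((d-1-a_H)^2-1\big)q^{m-3}+q^{m-3}$, counting the double plane once, which saves $q^{m-3}$ per hyperplane, i.e.\ $(q+1)q^{m-3}$ in total.

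\textbf{Main obstacle.} The hard part is the careful bookkeeping showing that the savings in each case reach exactly $q^{m-2}+(2d-2)q^{m-3}$ beyond Proposition \ref{prop: case 1 mult 1 linear component}. In particular, one has to justify that the multiplicity of $l'$ in $X$ equals its multiplicity in $Y$ and in $K_H$ (respectively in the complete intersection $Z(f_1,f_2)$). I would first verify the refined Lachaud--Rolland count, Lemma \ref{lem:affine-degree-estimate}(1), applied with $\cdeg$ to the reduced structure. This is exactly what converts multiplicity into saved points.
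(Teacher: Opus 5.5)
Your overall plan (split $X(\Fq)$ along the $q+1$ hyperplanes of $\lc(\Fq)$ and extract a saving of order $q^{m-2}$ from $l'$) is the same as the paper's. However, the proposal has several concrete errors, and in your Case~1 it cannot be completed at all, because there the statement is false.

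Take $m=3$, $d=2$, $q\ge 3$, $F_1=X_0X_3-X_1X_2$, $F_2=X_0X_2$. Then $X=l\cup l'\cup l''$ with $l=V(X_0,X_1)$, $l'=V(X_0,X_2)$, $l''=V(X_2,X_3)$. In the chart $X_3=1$ the equations become $x_0=x_1x_2$, $x_1x_2^2=0$, so $\mult_l(X)=1$ and $\mult_{l'}(X)=2$. The points with $X_0=0$ form $l\cup l'$ and those with $X_0\neq 0$ are the points $[1:c:0:0]$, so $|X(\Fq)|=(2q+1)+q=3q+1$. The right-hand side is $3q+\pi_0(q)-1=3q$.

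Here $l,l'\subseteq H_0=V(X_0)$. For each of the other $q$ hyperplanes $H_c=V(X_1-cX_0)\in\lc(\Fq)$ one has $H_c\cap l'=l\cap l'\subseteq l$, so the double line leaves no trace off $l$. Indeed $(X\cap H_c)\setminus l=\{[1:c:0:0]\}$, which is the full $(d-1-a_{H_c})^2=1$. This is exactly where the paper's own proof breaks. It claims that for $H\not\supseteq l'$ the section $l'\cap H$ is a double component of $X\cap H$ off $l$, giving $|Z(f_1,f_2)(\Fq)|\le((d-1-a_H)^2-1)q^{m-3}$. That holds only when $l$ and $l'$ are not coplanar. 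The example does meet the weaker bound $(d^2-1)q^{m-2}+\pi_{m-3}(q)$, which is all that Proposition~\ref{prop: case 2: multi 2 linear component} takes from this result.

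\textbf{Case 1.} Your mechanism fails on the same example: $\bar F_1=-X_1X_2$ and $\bar F_2=0$, so $K_{H_0}=X_2$ and $\mu=1$. In general $\mu$ is the length of $\mathcal O_{X\cap H_0,\eta_{l'}}$, which is a quotient of $\mathcal O_{X,\eta_{l'}}$. Hence $\mu\le\mult_{l'}(X)$, with equality only when $X$ lies in $H_0$ near $\eta_{l'}$. So $\mult_{l'}(X)\ge 2$ gives neither $\mu\ge2$ nor that $a_{H_0}$ counts $l'$ twice.

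Even under your assumption $t_W=0$, Lemma~\ref{lem: bound on the sum}(1) gives $a_{H_0}\le d-2$. For $d=3$ this forces $K_{H_0}$ to be the linear form cutting out $l'$, i.e.\ $\mu=1$. That assumption is not available anyway, for three reasons:
\begin{enumerate}
\item the statement does not contain it;
\item the example above has $t_W=1$;
\item the paper needs the result without it, since Proposition~\ref{prop: case 2: multi 2 linear component} reduces to it and is used with $t_W=1$ (on hyperplane sections in its induction, and in the $t_W=1$ branch of the main theorem).
\end{enumerate}
The paper copes with arbitrary $t_W$ by using only $a_H\le d-1$, i.e.\ the per-hyperplane bound $q^{m-3}[(d-1)^2+a_H(q-d+1)]$.

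\textbf{Bookkeeping.} The sign is wrong. The target equals the bound of Proposition~\ref{prop: case 1 mult 1 linear component} minus $(q^{m-2}-(2d-2)q^{m-3})$. So $(2d-2)q^{m-3}$ is slack in your favour, not a deficit. It is exactly what the paper spends on replacing $a_H\le d-2$ by $a_H\le d-1$ against $\sum_H a_H\le 2d-2$.

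\textbf{Case 2.} The bound $\sum_H a_H\le 2d-4$ is unfounded. If no $H\in\lc(\Fq)$ contains $l'$, then $\dim(l\cap l')\le m-4$, which has codimension $\ge2$ in $l$ and contributes nothing to the hypersurface $Y|_l$. The device you fall back on is precisely the paper's key step: $l'\cap H$ is a double $(m-3)$-plane of $V(\bar F_1',\bar F_2')$ not contained in $l$, so $|Z(f_1,f_2)(\Fq)|\le((d-1-a_H)^2-1)q^{m-3}$. (The expression you wrote instead, $((d-1-a_H)^2-1)q^{m-3}+q^{m-3}$, equals the old bound.) A saving of $q^{m-3}$ in $q$ of the $q+1$ hyperplanes already suffices; it only looked insufficient because of the sign error. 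You would still need to check that $l'\cap H$ is not absorbed into $V(K_H)$, for instance when $a_H=d-1$ and $V(\bar F_1',\bar F_2')=\emptyset$. The paper also passes over this point.
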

\begin{proof}
Similar to the proof of Proposition \ref{prop: case 1 mult 1 linear component}, we write $X=l \cup Y$ where $Y$ is the residual scheme of $l$ in $X$, and we write $|X(\Fq)|=l(\Fq)+\sum_{H \in \lc(\Fq)}|\big((X \cap H )\backslash l\big)(\Fq)|$. We write 
\[\Fb_1=X_1 K_H \Fb_1',\qquad
\Fb_2=X_1 K_H \Fb_2', \qquad a_H=\deg(K_H).\] 
We have $(X\cap H) \backslash l \cong \big(Z(k_H)\bigcup Z(f_1,f_2)\big)\subseteq \A^{m-1}$ as before.

Consider the conclusion of Lemma \ref{lem: bound on the sum}. Since we removed the assumption that $t_W=0$, we only have $a_H \leq d-1$. The proof of Lemma \ref{lem: bound on the sum} of this part goes through and gives us that $\sum_{H \in \lc(\Fq)}a_H \leq 2d-2$.  We consider $|((X \cap H) \backslash l)(\Fq)|$ depending on whether $H$ contains $l'$ or not.

\begin{itemize}
    \item If $H$ does not contain $l'$: then, $H$ intersect $l'$ properly. Hence, \[\text{mult}_{l' \cap H}(X \cap H) \geq\text{mult}_{l'}(X) \geq 2\] 
    In particular, $\deg(V(\Fb_1', \Fb_2')^{\mathrm{red}}) \leq \deg(V(\Fb_1', \Fb_2'))-1$. Hence, \\
    $|Z(f_1,f_2)(\Fq)| \leq ((d-1-a_{H})^2-1)q^{m-3}$. We get that
    \[
    |\big((X \cap H) \backslash l\big)(\Fq)| 
    \leq q^{m-3}\big[a_Hq+(d-1-a_{H})^2-1\big] 
    \leq q^{m-3}\left[(d^2-2d)+a_H(q-d+1)\right].
    \]
    Here we used $a_H \leq d-1$.
    \item If $H$ contains $l'$: then, we have 
    \[|((X \cap H) \backslash l)(\Fq)| \leq q^{m-3}\big[a_Hq+(d-1-a_{H})^2\big] \leq q^{m-3}\left[(d^2-2d+1)+a_H(q-d+1)\right]\]
\end{itemize}
Since $l$ and $l'$ are different linear component, there is at most $1$ hyperplane in $\PPc^m(\Fq)$ containing both of them. Hence,
\begin{equation*}
    \begin{split}
        |X(\Fq)| &= |l(\Fq)|+\sum_{H \in \lc(\Fq)}|\big((X \cap H )\backslash l\big)(\Fq)| \\
        & \leq \pi_{m-2}(q)+\sum_{H \in \lc(\Fq), H \not \in \lc'(\Fq)}\big[(d^2-2d)+a_H(q-d+1)\big]q^{m-3}\\
        &+ \sum_{H \in \lc(\Fq) \cap \lc'(\Fq)}\big[(d^2-2d+1)+a_H(q-d+1)\big]q^{m-3}\\
        & \leq \pi_{m-2}(q)+q^{m-3}\big[(q+1)(d^2-2d)+1+\sum_{H \in \lc(\Fq)}a_H(q-d+1)\big] \\
        & \leq \pi_{m-2}(q)+q^{m-3}\big[(2d-2)(q-d+1)+(q+1)(d^2-2d)+1\big]\\
        & =\pi_{m-3}(q)+q^{m-3}\big[(d^2-1)q-(d-1)^2\big]\\
        &=\pi_{m-3}(q)+(d^2-1)q^{m-2}-(d-1)^2q^{m-3}.
    \end{split}
\end{equation*}
This proves \eqref{eq: multiplicity 1 and 2 linear component}.
\end{proof}

\subsection{If $V(W)$ contains a $\Fq$-rational linear component with multiplicity $\geq 2$}

\begin{proposition}\label{prop: case 2: multi 2 linear component}
Let $W=\langle F_1,F_2\rangle$, where $(F_1,F_2)$ is a regular sequence in $S_d(m,\Fq)$. Let $X=V(W) \subseteq \PP^m$. Suppose
\begin{enumerate}
    \item There is a $\Fq$-rational linear component of $X$ with multiplicity $ \geq 2$;
    \item $m \geq 3$.
\end{enumerate}
Then we have:
\begin{equation}\label{eq: inequality linear 2}
|X(\Fq)| \leq (d^2-1)q^{m-2}+\pi_{m-3}(q).
\end{equation}
\end{proposition}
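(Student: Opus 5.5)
Let $l'$ be an $\Fq$-rational linear component of $X$ with $\mult_{l'}(X)\geq 2$. The plan is to count points hyperplane by hyperplane through $l'$, exactly as in the proofs of Proposition~\ref{prop: case 1 mult 1 linear component} and Proposition~\ref{prop: case 1 mult 1 and 2 linear component}, but with the roles of $l$ and $l'$ swapped. After an $\Fq$-linear change of coordinates we take $l'=V(X_0,X_1)$. Every point of $(X\setminus l')(\Fq)$ lies on a unique hyperplane $H\in\check{l'}(\Fq)$, and there are $q+1$ such hyperplanes. Hence
\[
|X(\Fq)|=\pi_{m-2}(q)+\sum_{H\in\check{l'}(\Fq)}|((X\cap H)\setminus l')(\Fq)|.
\]

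For a fixed $H=V(X_0)$ we write $\Fb_i=X_1^{b_H}K_H\Fb_i'$. Here $X_1^{b_H}K_H=\gcd(\Fb_1,\Fb_2)$, $X_1\nmid K_H$ and $a_H=\deg K_H$. Dehomogenizing with respect to $X_1$ and applying Lemma~\ref{lem:affine-degree-estimate} gives
\[
|((X\cap H)\setminus l')(\Fq)|\leq a_Hq^{m-2}+(d-b_H-a_H)^2q^{m-3}.
\]
Two inputs are needed.

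First, $b_H\geq 1$ for all $H$, and in fact the multiplicity forces more. Because $\mult_{l'}(X)\geq 2$, the restriction $X\cap H$ along $l'$ has multiplicity at least $2$, so either $b_H\geq 1$ with a multiplicity that survives, or $l'$ is doubled inside $V(\Fb_1',\Fb_2')$. I expect that in every case the residual complete intersection $V(\Fb_1',\Fb_2')$, together with $K_H$, loses degree compared with the generic count. The crude bound I aim for is $(d-1-a_H)^2q^{m-3}+a_Hq^{m-2}$, using $b_H\ge1$.

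Second, I need a bound on $\sum_H a_H$. I mimic Lemma~\ref{lem: bound on the sum}: each $V(K_H)$ is an $(m-2)$-dimensional component of $X$ lying in $H$ and different from $l'$. Such components for different $H$ are distinct, and they sit inside the residual $Y$ of $l'$, which has degree at most $d^2-\mult_{l'}(X)\leq d^2-2$. So $\sum_H a_H\leq d^2-2$. Since $a_H\leq d-1$, the summand $a_Hq+(d-1-a_H)^2$ is convex in $a_H$. Maximizing $\sum_H\big[a_Hq+(d-1-a_H)^2\big]$ under these constraints puts as many $a_H$ as possible equal to $d-1$. That gives roughly $(d^2-2)q^{m-2}+(d-1)\cdot\text{const}\cdot q^{m-3}$. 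Adding $\pi_{m-2}(q)=q^{m-2}+\pi_{m-3}(q)$ then yields at most $(d^2-1)q^{m-2}+\pi_{m-3}(q)$, provided the lower-order terms are absorbed.

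The main obstacle is this bookkeeping. The hyperplanes $H$ with $a_H=d-1$ contribute $(d-1)q^{m-2}$ each but nothing at order $q^{m-3}$. Meanwhile each of the roughly $q+1-\tfrac{d^2-2}{d-1}$ remaining hyperplanes contributes up to $(d-1)^2q^{m-3}$, giving about $(d-1)^2q^{m-2}$ in total, which is too much. So I must exploit that when $a_H$ is small, the complete intersection $V(\Fb_1',\Fb_2')$ passes through $l'$ with multiplicity. Its affine part is then a complete intersection of degree $(d-b_H-a_H)^2$ minus what is absorbed at infinity.

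What I would try first is a better global accounting. The affine pieces $Z(f_1,f_2)$ over all $H$ are exactly the points of the $(m-2)$-dimensional components of $Y$ not contained in any $H\in\check{l'}$. Their union, call it $Z$, has degree at most $d^2-2-\sum_H a_H$. Via its projection from $l'$ to $\PP^1$, $Z$ should contribute at most $\deg(Z)\,q^{m-2}$ affine points, by Lemma~\ref{lem:affine-degree-estimate} applied to $Z\setminus l'$. Then
\[
|X(\Fq)|\leq \pi_{m-2}(q)+\sum_H a_Hq^{m-2}+\Big(d^2-2-\sum_H a_H\Big)q^{m-2}\leq (d^2-1)q^{m-2}+\pi_{m-3}(q),
\]
which is exactly the target. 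The key claim to verify carefully is $\deg Y\leq d^2-2$, i.e.\ that the multiplicity-$2$ component contributes degree $2$ to the cycle of $X$.
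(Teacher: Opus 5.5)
The gap is the final estimate $|(Z\setminus l')(\Fq)|\le \deg(Z)\,q^{m-2}$.

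\textbf{Why the estimate is unjustified.} Lemma~\ref{lem:affine-degree-estimate}(1) applies to closed subsets of an affine space. But $\PP^m\setminus l'$ is not affine, because $l'$ has codimension $2$. The projection from $l'$ you mention only produces fibres $(Z\cap H)\setminus l'$, which are $(m-3)$-dimensional closed subsets of $H\setminus l'\cong\A^{m-1}$. Summing over the $q+1$ hyperplanes through $l'$ therefore gives $\deg(Z)(q^{m-2}+q^{m-3})$, not $\deg(Z)q^{m-2}$.

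\textbf{The loss really occurs.} A linear component $C$ with $\dim(C\cap l')=m-4$ lies in no hyperplane through $l'$. It has $\pi_{m-2}(q)-\pi_{m-4}(q)=q^{m-2}+q^{m-3}$ rational points off $l'$. Such components occur under the hypotheses of the proposition. In $\PP^3$ take $F_1=X_0X_2^2+X_1^2X_3$ and $F_2=X_0X_3^2+X_1^2X_2$. These are irreducible and coprime. The line $l'=V(X_0,X_1)$ has multiplicity $2$: setting $X_2=1$ and eliminating $x_0=-x_1^2x_3$ leaves $x_1^2(1-x_3^3)$, so the local ring at the generic point of $l'$ has length $2$. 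Yet $V(X_2,X_3)$ is a component skew to $l'$ with $q+1>q$ rational points.

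\textbf{Consequence.} Your final chain lands exactly on $(d^2-1)q^{m-2}+\pi_{m-3}(q)$ with no slack, so nothing in it absorbs the extra $\deg(Z)q^{m-3}$. Your earlier hyperplane-by-hyperplane bookkeeping is too lossy, as you noticed yourself. The degree count $\sum_H a_H+\deg Z\le d^2-\mult_{l'}(X)$, which you single out as the delicate point, is actually fine. It follows from $\deg X=\sum_C \mult_C(X)\deg C$ for the unmixed complete intersection $X$.

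\textbf{What is missing, and how the paper proceeds.} You need some control of how the components lying in no hyperplane through $l'$ meet $l'$. For a multiplicity-one line this control is Lemma~\ref{lem: degree of Y on l}, but nothing analogous is available once $\mult_{l'}(X)\ge 2$. The paper avoids the pencil through $l'$ altogether:
\begin{enumerate}
\item It first disposes of multiplicity-one linear components via Proposition~\ref{prop: case 1 mult 1 and 2 linear component}.
\item It splits $X$ into its $t$ linear components, the non-linear components lying in the $s$ hyperplanes of $\mathcal{S}$, and the rest.
\item When $t+s\ge d+1$, it bounds everything crudely using Homma's bound and the Lachaud--Rolland estimates.
\item When $t+s\le d$, either $X(\Fq)$ is covered by the linear components and the hyperplanes in $\mathcal{S}$, where Serre's bound suffices, or there is a rational point $P$ of $X$ off all of them.
\item In the latter case it double counts incidences with the $\Fq$-hyperplanes through $P$ and uses \emph{induction on $m$}. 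A hyperplane through $P$ containing no linear component cuts $X$ in a complete intersection in which a linear component of multiplicity $\ge 2$ survives.
\end{enumerate}
So the components your approach cannot control are handled inductively through their hyperplane sections, not by a direct point count.
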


\begin{remark}
Here, we remove the assumption that $t_W=0$. Hence, if $X$ has a linear component of multiplicity $\geq 2$, then even if $t_W=1$, we are bounded away from the target inequality. 
The reason for removing this assumption is that even if $X$ has $t_W=0$, this might not be true when restricted to a hyperplane. 
To apply the inductive argument, we can not have $t_W=0$ in the statement. 
\end{remark}

\begin{proof}
We induct on $m$, the dimension of the ambient projective space. 

Let $l_1,\ldots,l_t$ be the distinct $\Fq$-rational linear components of $X$, at least one of which has multiplicity at least $2$. Let $\Set$ be the set of $\Fq$-rational hyperplanes that contain some non-linear $\Fq$-component of $X$. For each $H \in \Set$, let $\Gamma_H$ be the union of the non-linear $\Fq$-irreducible components of $X$ contained in $H$. Let $Z$ be the union of the components not contained in any $\Fq$-rational hyperplane. Then we can write
\[
X
=
\left(\bigcup_{i=1}^t l_i\right)
\cup
\left(\bigcup_{H\in \mathcal S}\Gamma_H\right)
\cup Z.
\]
 Put
\[
s=|\mathcal S|,
\qquad
B=\sum_{H\in \mathcal S}\deg(\Gamma_H), \qquad D=\deg(Z)
\]
If $X$ has some linear component with multiplicity $1$, then the conclusion follows from Proposition~\ref{prop: case 1 mult 1 and 2 linear component}. Therefore, we can assume that every linear component of $X$ has multiplicity $\geq 2$. Hence, 
we know that $B \geq 2s$ and $B+2t+D \leq d^2$.

When $m=2$, $X$ is a zero dimensional scheme of degree $d^2$. Moreover, there exists a $\Fq$-rational point of $X$ with multiplicity $\geq 2$. Hence, $|X(\Fq)| \leq d^2-1=(d^2-1)q^{m-2}+\pi_{m-3}(q)$. This proves the base case for our induction.


Now, we consider the general $m \geq 3$. On each $\Gamma_H$, we use the fact that it is a hypersurface in $H$ of degree $\deg(\Gamma_H)$ with no $\Fq$-rational linear component. By Lemma \ref{lem: no linear divisor hypersurface},
\begin{align*}
\begin{split}
|\Gamma_H(\Fq)| 
&\leq (\deg(\Gamma_H)-1)q^{m-2}+\deg(\Gamma_H)q^{m-3}+\pi_{m-4}(q),\\
|\left(\bigcup_{H \in \Set}\Gamma_H\right)(\Fq)|
& \leq (B-s)q^{m-2}+Bq^{m-3}+s\pi_{m-4}(q).\\ 
\end{split}
\end{align*}
And by Lemma \ref{lem:affine-degree-estimate} we get:
\[
\left|\left(\bigcup_{i=1}^t l_i\cup Z\right)(\Fq)\right|
\leq (D+t)\pi_{m-2}(q)
\leq (d^2-B-t) \pi_{m-2}(q).\]
Hence,
\[
|X(\Fq)| 
\leq \big[d^2-(t+s)\big]q^{m-2}
+(d^2-t)q^{m-3}
+(d^2-B-t+s)\pi_{m-4}(q).
\]
We separate into two cases based on $t+s$.
\begin{enumerate}
\item If $t+s \geq d+1$. Then,
\begin{equation*}
    \begin{split}
        |X(\Fq)| & 
        \leq (d^2-d-1)q^{m-2}+(d^2-t)q^{m-3}+(d^2-s-t)\pi_{m-4}(q)\\
        & \leq (d^2-d-1)q^{m-2}+(d^2-1)q^{m-3}+(d^2-d-1)\pi_{m-4}(q).\\
    \end{split}
\end{equation*}
    To show that $|X(\Fq)| \leq (d^2-1)q^{m-2}+\pi_{m-3}(q)$, it suffices to show that the difference 
    \begin{equation}\label{eq: diff geq 0}
    dq^{m-2}-(d^2-2)q^{m-3}-(d^2-d-2)\pi_{m-4}(q) \geq 0.
    \end{equation}
    Since $q \geq d+1$, we have
    \begin{equation*}
        \begin{split}
            (d-1)q^{m-2} & \geq (d^2-2)q^{m-3}\\
            q^{m-2} & \geq (d^2-d-2)\frac{q^{m-3}}{q-1} \geq (d^2-d-2)\pi_{m-4}(q).\\
        \end{split}
    \end{equation*}
    Hence we get \eqref{eq: diff geq 0} as desired and hence prove \eqref{eq: inequality linear 2} in the case $t+s \geq d+1$.
    \item If $t+s \leq d$, then we separate further into two subcases.
    \begin{enumerate}
    \item If $X(\Fq) \subseteq \left(\bigcup_{i=1}^t l_i  \right) \cup  \left( \bigcup_{H \in \Set} H \right)$. 
    \begin{itemize}
        \item If $s=0$. Then, $X(\Fq) \subseteq \left(\bigcup_{i=1}^t l_i\right)$. So 
        \[
        |X(\Fq)|
        \leq t\pi_{m-2}(q) 
        \leq d\pi_{m-2}(q) 
        \leq (d^2-1)q^{m-2}+\pi_{m-3}(q).
        \]
        \item If $s \geq 1$. Then fix an arbitrary $H_1 \in \Set$. We have
        \[
        |X(\Fq)| 
        \leq |(X \cap H_1)(\Fq)|
        +\sum_{H' \in \Set, H' \neq H_1} |\big( (X \cap H') \backslash H_1\big)(\Fq)|
        + \sum_{i=1}^t |(l_i \backslash H_1)(\Fq)|.
        \]

        By the known case of Conjecture \ref{Conj: complete GDC} for $r =1$, we know that 
        \begin{align*}
            \begin{split}
            |(X \cap H_1)(\Fq)| &\leq dq^{m-2}+\pi_{m-3}(q), \\
            |\big( (X \cap H') \backslash H_1\big)(\Fq)| & \leq dq^{m-2}.
            \end{split}
        \end{align*}
        Hence, 
        \[
        |X(\Fq)| 
        \leq sdq^{m-2}+\pi_{m-3}(q)+tq^{m-2} 
        = (ds+t)q^{m-2}+\pi_{m-3}(q).
        \] 

        We want to show that $ds+t \leq d^2-1$. Since $1 \leq s \leq d-t \leq d-1$, we have $ds+t \leq ds+d-s \leq d^2-1$. Hence, \eqref{eq: inequality linear 2} is true in this case. 
    \end{itemize}
    
    \item If there exists $P \in \left(X \backslash(\bigcup_{i=1}^t l_i \cup  \bigcup_{H \in \Set} H)\right)(\Fq)$:
       Then, we fix such a point $P$ and let $\Pc(\Fq) \subseteq \PPc^m(\Fq)$ be the set of $\Fq$-rational hyperplanes passing through $P$. Consider the incidence set
\[
T_P
=
\{(x,H): x\in X(\Fq)\setminus\{P\},H \in \Pc(\Fq), x \in H\}.
\]
For each \(x\in X(\Fq)\setminus\{P\}\), the number of \(\Fq\)-hyperplanes containing
both \(P\) and \(x\) is \(\pi_{m-2}(q)\). Hence
\[
|T_P|
=
\left(|X(\Fq)|-1\right)\pi_{m-2}(q).
\]
On the other hand, the fiber over each $H' \in \Pc(\Fq)$ is $(X \cap H')(\Fq)\backslash \{P\}$. If $H'$ is an $\Fq$-hyperplane containing $P$, then $H'$ must intersect each $\Gamma_{H}$ properly for every $H \in \Set$. Moreover, for each linear component $l_i$, there is at most one hyperplane $H' \in \Pc(\Fq)$ that contains $l_i$. So, there are at most $t$ hyperplanes $H' \in \Pc(\Fq)$ that contain some linear component $l_i$.

Let $\mathcal{T}=\{H' \in \Pc(\Fq) \mid H' \text{ contains some linear component $l_i$}\}$. For $H' \in \Pc(\Fq)$, we consider $|(X \cap H')(\Fq)|$.
\begin{itemize}
    \item If $H' \in \TT$. Let $b_{H'}$ be the number of distinct linear components contained in $H'$. Then,
    \[
    |(X \cap H')(\Fq) \backslash \{P\} | 
    \leq b_{H'}q^{m-2}
    +\pi_{m-3}(q)
    +(d-b_{H'})^2q^{m-3}-1.
    \]
    \item If $H' \not \in \TT$. Then, $H'$ also intersects all the linear components $l_i$ properly. Let $h'$ be the linear form such that $H'=V(h')$. 
    Then, $X \cap H'=V(\Fb_1,\Fb_2)$ is a complete intersection in $H' \cong \PP^{m-1}$. Moreover, 
    \[
    \text{mult}_{l_i \cap H'}(X \cap H') \geq \text{mult}_{l_i}(X).
    \] 
    Therefore, $X \cap H'$ has at least one linear component with multiplicity at least 2. Hence it satisfies the assumptions stated in Proposition \ref{prop: case 2: multi 2 linear component}. 
    Applying the inductive assumption to $|X \cap H'|$, we have
    \[
    |(X \cap H')(\Fq) \backslash \{P\} | \leq (d^2-1)q^{m-3}+\pi_{m-4}(q)-1.
    \]
    \end{itemize}

    Hence, we have
\begin{equation*}
\begin{split}
\pi_{m-2}(q)\big(|X(\Fq)|-1\big)
&=\sum_{H'\in\Pc(\Fq)}
|(X\cap H')(\Fq)\backslash\{P\}|\\
&\leq
\sum_{H'\in\TT}
\left[
b_{H'}q^{m-2}+\pi_{m-3}(q)+(d-b_{H'})^2q^{m-3}-1
\right]\\
&\quad+
\sum_{H'\in\Pc(\Fq)\backslash\TT}
\big(
(d^2-1)q^{m-3}+\pi_{m-4}(q)-1
\big)\\
&=
\pi_{m-1}(q)
\big(
(d^2-1)q^{m-3}+\pi_{m-4}(q)-1
\big)\\
&\quad+
q^{m-3}\sum_{H'\in\TT}
\big(
b_{H'}(q-2d+b_{H'})+2
\big).
\end{split}
\end{equation*}
Since $P\notin l_i$, for each linear component $l_i$ there is a unique hyperplane through $P$ containing $l_i$. Hence, $\sum_{H'\in\TT}b_{H'}=t.$
Therefore $\sum_{H'\in\TT}b_{H'}^2\leq t^2$. Also, we have $|\TT|\leq t$. Therefore,
\[
\begin{split}
\sum_{H'\in\TT}
\big[ b_{H'}(q-2d+b_{H'})+2 \big]
&\leq t(q-2d)+t^2+2t\\
&=t(q-2d+t+2)\leq d(q-d+2),
\end{split}
\]
where the last inequality follows from $t\leq d$.
Thus,
\[
\begin{split}
\pi_{m-2}(q)\big(|X(\Fq)|-1\big)
&\leq
\pi_{m-1}(q)
\big(
(d^2-1)q^{m-3}+\pi_{m-4}(q)-1
\big)\\
&\quad+d(q-d+2)q^{m-3}.
\end{split}
\]
To show \eqref{eq: inequality linear 2}, it suffices to show that the difference
\begin{equation}\label{eq: difference geq 0 3}
q\pi_{m-2}(q)
-\big[(d^2-1)q^{m-3}+\pi_{m-4}(q)-1+d(q-d+2)q^{m-3}\big]
\geq 0.
\end{equation}
Indeed,
\begin{align*}
& q\pi_{m-2}(q)
-\big[(d^2-1)q^{m-3}+\pi_{m-4}(q)-1+d(q-d+2)q^{m-3}\big]\\
&=
\big(q^2+q-dq-2d+1\big)q^{m-3}
+(q-1)\pi_{m-4}(q)+1\\
&=
\big[q(q-d+1)-2d+1\big]q^{m-3}
+(q-1)\pi_{m-4}(q)+1.
\end{align*}
Since $q\geq d+1$, we have
\[
q(q-d+1)-2d+1
\geq 2q-2d+1>0.
\]
Hence, \eqref{eq: difference geq 0 3} holds, which proves \eqref{eq: inequality linear 2} in this case.
\end{enumerate}
\end{enumerate}
This finishes the proof of \eqref{eq: inequality linear 2} and hence Proposition \ref{prop: case 2: multi 2 linear component}.
\end{proof}

\subsection{If $V(W)$ has no $\Fq$-rational linear component}
\begin{proposition}\label{prop: case 3 no linear component}
    Let $W=\langle F_1,F_2\rangle$, where $(F_1,F_2)$ is a regular sequence in $S(m,\Fq)$. Let $X=V(W) \subseteq \PP^m$. Suppose 
    \begin{itemize}
    \item $t_W=0$;
    \item $m \geq 3$;
    \item $X$ contains no $\Fq$-rational linear component;
    \item $e^{\PP}_{2}(d,m-1,m-3;q) \leq f^{\PP}_{2}(d,m-1,m-3;q)$.
    \end{itemize}
    Then we have
    \begin{equation}\label{eq: non-liner component bound}
    |X(\Fq)| \leq d^2q^{m-2}+\pi_{m-3}(q)-(q-d+2)
    \end{equation}
\end{proposition}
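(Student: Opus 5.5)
We will mirror the structure of the proof of Proposition~\ref{prop: case 2: multi 2 linear component}, decomposing $X$ by its relation to $\Fq$-rational hyperplanes. Let $\Set$ be the set of $\Fq$-hyperplanes containing some $\Fq$-irreducible component of $X$. For $H\in\Set$ let $\Gamma_H$ be the union of the components contained in $H$. Let $Z$ be the union of the components not contained in any $\Fq$-hyperplane. Set $s=|\Set|$, $B=\sum_H\deg\Gamma_H$ and $D=\deg Z$, so that $B+D\le d^2$. Every component of $X$ is non-linear, so each $\Gamma_H$ is a hypersurface in $H\cong\PP^{m-1}$ with no $\Fq$-linear factor, and $\deg\Gamma_H\ge 2$. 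We will split into two cases according to whether $X(\Fq)$ is covered by $\bigcup_{H\in\Set}H$ or not.

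\emph{Case 1: there is $P\in X(\Fq)$ lying on no $H\in\Set$.} We will run the incidence count $T_P$ over hyperplanes $H'\ni P$, giving
\[
\pi_{m-2}(q)\big(|X(\Fq)|-1\big)=\sum_{H'\ni P}\big(|(X\cap H')(\Fq)|-1\big).
\]
Each such $H'$ meets every component of $X$ properly, so $X\cap H'=V(\Fb_1,\Fb_2)$ is a complete intersection of dimension $m-3$ in $\PP^{m-1}$. If $t_{W|_{H'}}=0$ we want the hypothesis $e^{\PP}_2(d,m-1,m-3;q)\le f^{\PP}_2$, which gives $|X\cap H'(\Fq)|\le d^2q^{m-3}+\pi_{m-4}(q)$. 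If instead some combination of $\Fb_1,\Fb_2$ is divisible by a linear form, we use the same bound: Corollary~\ref{cor: tw eq 1 case} (via Proposition~\ref{prop: projective conjecture in (m-1,k) implies tW=1 case} in $\PP^{m-1}$) together with the hypothesis gives it in every case. Summing over the $\pi_{m-1}(q)$ hyperplanes and dividing yields roughly $|X(\Fq)|\le d^2q^{m-2}+\pi_{m-3}(q)+1-\tfrac{\pi_{m-1}}{\pi_{m-2}}$. This is not yet strong enough, so we must gain a saving of about $q-d+2$ from some hyperplanes.

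The gain will come from hyperplanes $H'$ through $P$ where $X\cap H'$ acquires an $\Fq$-linear component. For these, Propositions~\ref{prop: case 1 mult 1 linear component}, \ref{prop: case 1 mult 1 and 2 linear component} and \ref{prop: case 2: multi 2 linear component} (applied in $\PP^{m-1}$, when $m-1\ge3$) give a deficit of order $q^{m-4}$ or more. We will also use that on a generic hyperplane, since $X$ has no linear component, the bound is strict. The cleaner route is probably a different one. Choose an $\Fq$-hyperplane $H_0$ through $P$ and bound $X\setminus H_0$ affinely by $d^2q^{m-2}$ via Lemma~\ref{lem:affine-degree-estimate}(2). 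Then bound $X\cap H_0$ by the hypothesis $d^2q^{m-3}+\pi_{m-4}$. This overshoots, so it needs refinement.

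\emph{Case 2: $X(\Fq)\subseteq\bigcup_{H\in\Set}H$.} Bound $|\Gamma_H(\Fq)|$ by Lemma~\ref{lem: no linear divisor hypersurface}. Bound the contributions of the other components of $X$ inside each $H$ by $|(X\cap H)(\Fq)|$, as in the $s\ge1$ subcase of Proposition~\ref{prop: case 2: multi 2 linear component}. This yields an estimate of the shape $(B-s)q^{m-2}+\dots$. Here $B\ge 2s$ supplies a loss of $sq^{m-2}$, which comfortably beats $q-d+2$ once $s\ge1$. If $s=0$ then $X(\Fq)=\emptyset$.

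The main obstacle is Case~1: extracting the precise saving $q-d+2$ rather than just the conjectured bound. I would first try the averaging argument and show that at least about $q$ of the hyperplanes through $P$ lose at least one point. For instance, a hyperplane $H'$ through $P$ and a non-$\Fq$-rational geometric component forces a Galois orbit of components, and with no rational linear component the inductive equality case (which requires a linear component with $t=1$) cannot occur on most $H'$.
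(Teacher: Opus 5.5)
\textbf{Overall.} Your top-level split (whether or not $X(\Fq)$ is covered by $\bigcup_{H\in\Set}H$) is workable, but neither case is completed. The obstacle you name in Case~1 does not exist, and Case~2 has a genuine gap.

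\textbf{Case~1.} The incidence count already gives exactly the claimed bound. Put $N=d^2q^{m-3}+\pi_{m-4}(q)$. The hypothesis applies directly to every section $X\cap H'$ with $P\in H'$: each is a pencil in $\PP^{m-1}$ with $\dim V\le m-3$, whatever its $t$, so the detour through the $t=1$ corollary is unnecessary. This gives
\[
\pi_{m-2}(q)\bigl(|X(\Fq)|-1\bigr)\le \pi_{m-1}(q)(N-1).
\]
Using $\pi_{m-1}(q)=q\pi_{m-2}(q)+1$ and the integrality of $|X(\Fq)|$,
\[
|X(\Fq)|\le 1+q(N-1)+\left\lfloor\frac{N-1}{\pi_{m-2}(q)}\right\rfloor=d^2q^{m-2}+\pi_{m-3}(q)-q+\left\lfloor\frac{N-1}{\pi_{m-2}(q)}\right\rfloor .
\]
Moreover
\[
(d-1)\pi_{m-2}(q)-N=\bigl[(d-1)(q+1)-d^2\bigr]q^{m-3}+(d-2)\pi_{m-4}(q)\ge 0
\]
for $q\ge d+1$ and $d\ge 2$. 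The case $d=1$ is excluded, since then $X$ is itself an $\Fq$-linear space. Hence the floor is at most $d-2$, which gives exactly $d^2q^{m-2}+\pi_{m-3}(q)-(q-d+2)$.

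Your own rough expression already shows a deficit of about $q$. It comes from the fact that $P$ lies on all $\pi_{m-1}(q)$ hyperplanes through it, while every other point lies on only $\pi_{m-2}(q)$ of them. The term you dropped, $(N-1)/\pi_{m-2}(q)$, is precisely what integrality controls. No hyperplane needs to lose further points. The alternatives you list (linear components on sections, Galois orbits, splitting off $H_0$) are unnecessary, and as stated they are not carried out.

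\textbf{Case~2.} Your claim that $B\ge 2s$ yields a loss $sq^{m-2}$ that ``comfortably beats'' $q-d+2$ ignores the lower-order terms. Bounding each $\Gamma_H$ by Lemma~\ref{lem: no linear divisor hypersurface} and $Z$ by Lemma~\ref{lem:affine-degree-estimate} gives only
\[
(d^2-s)q^{m-2}+d^2q^{m-3}+(s+D)\pi_{m-4}(q).
\]
For small $s$ the term $d^2q^{m-3}$ swamps the gain: for $s=1$, $m=3$, $d=3$, $q=4$ this is $41$ against the target $34$. Summing Serre's bound over $\Set$, as in Proposition~\ref{prop: case 2: multi 2 linear component}, gives $sdq^{m-2}+\pi_{m-3}(q)$, which fails once $s\ge d$. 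Bounding ``the other components inside $H$ by $|(X\cap H)(\Fq)|$'' is circular.

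You must split on $s$.
\begin{itemize}
\item For $s\ge d+1$, the global estimate above suffices: $s+D\le d^2-s\le d^2-d-1$, and with $q\ge d+1$ this gives $|X(\Fq)|\le (d^2-1)q^{m-2}+\pi_{m-3}(q)$.
\item For $s\le d$, you need the hypothesis $t_W=0$, which your proposal never uses. For $H=V(h)\in\Set$, the forms $F_1,F_2$ remain linearly independent modulo $h$. So the $r=2$ case of Conjecture~\ref{Conj: complete GDC} (Boguslavsky) gives $|(X\cap H_1)(\Fq)|\le (d-1)q^{m-2}+q^{m-3}+\pi_{m-3}(q)$. Theorem~\ref{thm: affine BDG} gives $|((X\cap H)\setminus H_1)(\Fq)|\le (d-1)q^{m-2}+q^{m-3}$ for $H\ne H_1$. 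Summing over at most $d$ hyperplanes yields
\[
d(d-1)q^{m-2}+dq^{m-3}+\pi_{m-3}(q)\le (d^2-1)q^{m-2}+\pi_{m-3}(q),
\]
which is below the target.
\end{itemize}
With this sub-split, your two-case structure covers everything, since the Case~1 argument needs no restriction on $s$.
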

\begin{proof}

We use the notation $\Set$, $\Gamma_H$, $Z$, $s$, $B$, and $D$ from the proof of Proposition \ref{prop: case 2: multi 2 linear component}. Thus, $\Set$ is the set of $\Fq$-rational hyperplanes containing an $\Fq$-irreducible component of $X$, $\Gamma_H$ is the union of the components of $X$ contained in $H$, and $Z$ is the union of the components not contained in any $\Fq$-rational hyperplane. As before, set
\[
s=|\Set|,\qquad
B=\sum_{H\in\Set}\deg(\Gamma_H),\qquad
D=\deg(Z).
\]
Since $X$ has no $\Fq$-rational linear component, we have
\[
X=\left(\bigcup_{H\in\Set}\Gamma_H\right)\cup Z,
\qquad
B+D\leq d^2, \qquad B \geq 2s
\]
\begin{enumerate}
\item If $s \geq d+1$. In this case we show that 
\begin{equation}\label{eq: case 1 target}
|X(\Fq)| \leq (d^2-1)q^{m-2}+\pi_{m-3}(q)
\end{equation}
Notice that this implies \eqref{eq: non-liner component bound} since $q^{m-2} \geq (q-d+2)$.

Let $a_H=\deg(\Gamma_H)$.
Since \(X\) has no $\Fq$-rational linear components, \(\Gamma_H\) viewed as a hypersurface in
\(H\simeq \PP^{m-1}\), has no $\Fq$-rational linear component. Hence, by Lemma \ref{lem: no linear divisor hypersurface}, we have
\[
|\Gamma_H(\Fq)|
\leq
(a_H-1)q^{m-2}+a_Hq^{m-3}+\pi_{m-4}(q).
\]
On the other hand, by Lemma \ref{lem:affine-degree-estimate}, since $Z$ is pure of dimension $m-2$, we have
\[
|Z(\Fq)|\leq \deg(Z)\pi_{m-2}(q).
\]
We obtain
\begin{equation}\label{eq: bound 1}
\begin{split}
|X(\Fq)|
&\leq
\sum_{H\in \mathcal S}|\Gamma_H(\Fq)|+|Z(\Fq)| \\
&\leq
(B-s)q^{m-2}+Bq^{m-3}+s\pi_{m-4}(q)+D\pi_{m-2}(q) \\
&=
(B+D-s)q^{m-2}+(B+D)q^{m-3}+(s+D)\pi_{m-4}(q) \\
&\leq
(d^2-s)q^{m-2}+d^2q^{m-3}+(s+D)\pi_{m-4}(q).
\end{split}
\end{equation}

Since $B\geq 2s$ and $B+D\leq d^2$, we have $D \leq d^2-2s$, and hence $s+D \leq d^2-s$. Then \eqref{eq: bound 1} gives
\[
|X(\Fq)|
\leq
(d^2-d-1)q^{m-2}
+
d^2q^{m-3}
+
(d^2-d-1)\pi_{m-4}(q).
\]
To show \eqref{eq: case 1 target}, it suffices to show that the difference
\[(d^2-1)q^{m-2}+\pi_{m-3}(q)-\big[(d^2-d-1)q^{m-2}
+
d^2q^{m-3}
+
(d^2-d-1)\pi_{m-4}(q)\big] \geq 0\]
We carry out the computation
\begin{equation*}
    \begin{split}
        &(d^2-1)q^{m-2}+\pi_{m-3}(q)-\big[(d^2-d-1)q^{m-2}
+
d^2q^{m-3}
+
(d^2-d-1)\pi_{m-4}(q)\big]\\
&=dq^{m-2}-(d^2-1)q^{m-3}-(d^2-d-2)\pi_{m-4}(q) \\
& \geq q^{m-3}\big[dq-(d^2-1)-\frac{d^2-d-2}{q-1}\big]\geq q^{m-3}\big[dq-(d^2-1)-(d+1)\big] \\
&\geq q^{m-3}\big[dq-d(d+1)\big] \geq 0.
    \end{split}
\end{equation*}
This proves \eqref{eq: case 1 target}.
\item If $s \leq d$. Then we further divide into the following $2$ sub-cases:
\begin{enumerate}
    \item Suppose there exists $P \in \left( X\backslash (\bigcup_{H \in \Set}H)\right)(\Fq)$. 
    
    Then, we fix such a point $P$ and let $\Pc(\Fq) \subseteq \PPc(\Fq)$ be the set of $\Fq$-rational hyperplanes passing through $P$. Consider the incidence set
\[
T_P
=
\{(x,H): x\in X(\Fq)\setminus\{P\},H \in \Pc(\Fq), x \in H\}.
\]
For each \(x\in X(\Fq)\setminus\{P\}\), the number of \(\Fq\)-hyperplanes containing
both \(P\) and \(x\) is \(\pi_{m-2}(q)\). Hence
\[
|T_P|
=
\left(|X(\Fq)|-1\right)\pi_{m-2}(q).
\]
On the other hand, if \(H\) is an \(\Fq\)-hyperplane containing \(P\), then \(H\)
contains no irreducible component of \(X\) by the choice of \(P\). Therefore
\(X\cap H\) is a complete intersection of degree $(d,d)$ in
\(H\simeq \PP^{m-1}\). By the inductive assumption $e^{\PP}_{2}(d,m-1,m-3;q) \leq f^{\PP}_{2}(d,m-1,m-3;q)$, we have
 $|(X\cap H)(\Fq)|
\leq
d^2q^{m-3}+\pi_{m-4}(q)$.
Thus
\[
|T_P|
\leq
\pi_{m-1}(q)\left(d^2q^{m-3}+\pi_{m-4}(q)-1\right).
\]
Therefore
\[
\pi_{m-2}(q)\left(|X(\Fq)|-1\right)
\leq
\pi_{m-1}(q)\left(d^2q^{m-3}+\pi_{m-4}(q)-1\right).
\]
Using $
\pi_{m-1}(q)=q\pi_{m-2}(q)+1$,
we get
\[
\begin{aligned}
|X(\Fq)|
&\leq
1+
\tfrac{\pi_{m-1}(q)}{\pi_{m-2}(q)}
\left(d^2q^{m-3}+\pi_{m-4}(q)-1\right) \\
&=
1+
q\left(d^2q^{m-3}+\pi_{m-4}(q)-1\right)
+
\left\lfloor\tfrac{d^2q^{m-3}+\pi_{m-4}(q)-1}{\pi_{m-2}(q)}\right\rfloor \\
&=
d^2q^{m-2}+\pi_{m-3}(q)-q
+
\left\lfloor \tfrac{d^2q^{m-3}+\pi_{m-4}(q)-1}{\pi_{m-2}(q)}\right\rfloor.
\end{aligned}
\]
To show \eqref{eq: non-liner component bound}, it suffices to show that
\[\left\lfloor \tfrac{d^2q^{m-3}+\pi_{m-4}(q)-1}{\pi_{m-2}(q)}\right\rfloor \leq d-2.\]
Equivalently,
\[
d^2q^{m-3}+\pi_{m-4}(q)-1\leq (d-2)\pi_{m-2}(q)+\pi_{m-2}(q)-1.
\]
Taking the difference, we have
\begin{align*}
\begin{split}
&(d-1)\pi_{m-2}(q)
-\left(d^2q^{m-3}+\pi_{m-4}(q)\right) \\
&=(d-1)(q^{m-2}+q^{m-3}+\pi_{m-4}(q))
-d^2q^{m-3}-\pi_{m-4}(q) \\
&=\bigl[(d-1)(q+1)-d^2\bigr]q^{m-3}
+(d-2)\pi_{m-4}(q).
\end{split}
\end{align*}
This is non-negative since $q \geq d+1$. Hence, we have shown \eqref{eq: non-liner component bound}.

\item If such point does not exist, then $X(\Fq)\subseteq \bigcup_{H\in \mathcal S}H$. In this case we show \eqref{eq: case 1 target}. If $s=0$ there is nothing to prove. If $s \geq 1$, fix an arbitrary $H_1 \in \Set$, we have
\[
|X(\Fq)|
\leq |(X \cap H_1)(\Fq)|+\sum_{H \in \Set, H \neq H_1}|\big((X \cap H)\backslash H_1\big)(\Fq)|
\]
For each $H \in \Set$, let $H=V(h)$. We claim that $F_1,F_2$ are linearly independent mod $h$. If not, then $hG=\alpha F_1+\beta F_2$, violating the assumption that $t_W=0$. Hence, by Theorem~\ref{thm: affine BDG} and the known case of Conjecture \ref{Conj: complete GDC} with $r=2$ and $m-1$ variables, we have that
\[
|(X\cap H_1)(\Fq)|
\leq
(d-1)q^{m-2}+q^{m-3}+\pi_{m-3}(q),
\]
\[
|\big( (X\cap H) \backslash H_1\big)(\Fq)|
\leq
(d-1)q^{m-2}+q^{m-3}.
\]
Thus
\begin{align*}
\begin{split}
|X(\Fq)|
&\leq
s(d-1)q^{m-2}+sq^{m-3}+\pi_{m-3}(q)\\
&\leq d(d-1)q^{m-2}+dq^{m-3}+\pi_{m-3}(q) \\
&=(d^2-1)q^{m-2}+\pi_{m-3}(q)-\big[(d-1)q^{m-2}-dq^{m-3}\big].\\
\end{split}
\end{align*}
Since $q \geq d+1$, we have $(d-1)q^{m-2}-dq^{m-3} \geq 0$. Hence, we get \eqref{eq: case 1 target} as desired.  \qedhere
\end{enumerate}
\end{enumerate}
\end{proof}
As a summary, we present the proof of Theorem \ref{prop: r=m-k,k=m-2}.
\begin{proof}[Proof of Theorem \ref{prop: r=m-k,k=m-2}]
Let $W \subseteq S_d(m, \Fq)$ such that $W=\langle F_1,F_2 \rangle$ and $(F_1,F_2)$ is a regular sequence in $S(m,\Fq)$. Let $X=V(W)$. We show that $|X(\Fq)| \leq \fptm$ by inducting on $m$.

If $m=2$, then $\fptm=d^2$. Moreover, by Bezout's theorem $X$ is a zero-dimensional scheme of degree $d^2$ so $|X(\Fq)| \leq \deg(X) \leq d^2$.

Now assume that $m \geq 3$ and that $e^{\PP}_2(d,m-1,m-3;q)=f^{\PP}_2(d,m-1,m-3;q)$. 
\begin{enumerate}
    \item If $t_W=1$: the result follows from Corollary \ref{cor: tw eq 1 case}. Furthermore, if $X$ contains a linear component of multiplicity $\geq 2$, by Proposition \ref{prop: case 2: multi 2 linear component}, we have \[|X(\Fq)| \leq \fptm-q^{m-2}.\]
    \item If $t_W=0$:
    \begin{enumerate}
        \item If $X$ contains no $\Fq$-rational linear component. Then by Proposition \ref{prop: case 3 no linear component}, $|X(\Fq)| \leq \fptm-(q-d+2)$.
        \item If $X$ contains a $\Fq$-rational linear component of multiplicity $1$. Then by Proposition \ref{prop: case 1 mult 1 linear component}, $|X(\Fq)| \leq \fptm-(d^2-1)q^{m-3}$.
        \item If $X$ contains $\Fq$-rational linear component of multiplicity $\geq 2$, by Proposition \ref{prop: case 2: multi 2 linear component}, we have $|X(\Fq)| \leq \fptm-q^{m-2}$.
    \end{enumerate}
\end{enumerate}
This finishes inductive step and hence the proof of Theorem \ref{prop: r=m-k,k=m-2}.
\end{proof}

\section*{Acknowledgments}
We thank Sudhir Ghorpade for introducing us to this problem. We thank Deepesh Singhal and Nathan Kaplan for many helpful discussions about the problem.

\bibliographystyle{plain}
\bibliography{Bibl.bib}
\end{document}